\documentclass[11pt,letterpaper]{amsart}

\usepackage[T1]{fontenc}
\usepackage{lmodern,microtype}
\usepackage{amsmath,amssymb,amsthm,mathtools}
\usepackage[shortlabels]{enumitem}
\usepackage{booktabs,array}
\usepackage{tikz}
\usetikzlibrary{arrows.meta,calc,decorations.pathreplacing}
\usepackage[colorlinks=true,linkcolor=blue,citecolor=blue,urlcolor=blue,hypertexnames=false]{hyperref}
\numberwithin{equation}{section}
\newtheorem{theorem}{Theorem}[section]
\newtheorem{proposition}[theorem]{Proposition}
\newtheorem{lemma}[theorem]{Lemma}
\newtheorem{corollary}[theorem]{Corollary}
\newtheorem{question}[theorem]{Question}
\theoremstyle{definition}
\newtheorem{definition}[theorem]{Definition}

\theoremstyle{remark}
\newtheorem{remark}[theorem]{Remark}
\DeclareMathOperator{\Fav}{Fav}
\DeclareMathOperator{\dist}{dist}
\DeclareMathOperator{\diam}{diam}

\DeclareMathOperator{\Disc}{Disc}

\newcommand{\R}{\mathbb R}
\newcommand{\C}{\mathbb C}
\newcommand{\Q}{\mathbb Q}
\newcommand{\Z}{\mathbb Z}
\newcommand{\N}{\mathbb N}
\newcommand{\T}{\mathbb T}

\newcommand{\cA}{\mathcal A}
\newcommand{\cB}{\mathcal B}
\newcommand{\cF}{\mathcal F}
\newcommand{\cH}{\mathcal H}
\newcommand{\cW}{\mathcal W}
\newcommand{\one}{\mathbf 1}
\newcommand{\ee}[1]{e^{2\pi i#1}}
\newcommand{\E}{\mathbb E}
\newcommand{\Prob}{\mathbb P}
\newcommand{\Nbd}[2]{\mathcal N_{#1}(#2)}

\allowdisplaybreaks
\title[Nearly sharp projections and superlinear Vitushkin inequalities]{Nearly Sharp Uniform Projection Bounds\\and Superlinear Vitushkin Inequalities\\for Deterministic Self-Similar Sets}
\author[C.~Marshall]{Caleb Marshall}
\address{Department of Mathematics, The University of Toronto, St. George Campus, Toronto, ON, Canada}
\email{caleb.marshall@utoronto.ca}
\date{October 15, 2026}
\dedicatory{For Ruthie}
\subjclass[2020]{Primary 28A78, 28A80, 30C85; Secondary 11J13, 60B15}
\keywords{Favard length, analytic capacity, self-similar sets, orthogonal projections, badly approximable vectors, Menger curvature}
\hypersetup{pdftitle={Nearly Sharp Uniform Projection Bounds and Superlinear Vitushkin Inequalities for Deterministic Self-Similar Sets},pdfauthor={Caleb Marshall}}

\begin{document}
\begin{abstract}
For each $\alpha\in(0,1)$, we construct an explicit deterministic finite homogeneous planar iterated function system of similarities satisfying the strong separation condition, whose attractor $E_\alpha$ has positive finite one-dimensional Hausdorff measure and, for all sufficiently small $\delta>0$, satisfies
\[
 \sup_{0\le\theta<\pi}
 \cH^1\bigl(\pi_\theta(\Nbd{\delta}{E_\alpha})\bigr)
 \lesssim_\alpha(\log\delta^{-1})^{-\alpha}.
\]
The defining system depends only on $\alpha$ and is fixed at every scale. For these same sets, we prove the sharp analytic capacity law
\[
 \gamma(\Nbd{\delta}{E_\alpha})
 \asymp_\alpha(\log\delta^{-1})^{-1/2}
\]
as $\delta\rightarrow0$. Consequently, each $E_\alpha$ satisfies the quantitative Vitushkin comparison
\[
 \Fav(\Nbd{\delta}{E_\alpha})
 \lesssim_\alpha\gamma(\Nbd{\delta}{E_\alpha})^{2\alpha},
 \qquad \forall\delta\in(0,1).
\]
These comparisons are superlinear for $\alpha>1/2$, with exponents arbitrarily close to the optimal upper limit two for this class of self-similar sets. A completely specified sixteen-map system gives $\alpha=53/100$ and comparison exponent $53/50$.

Inspired by Falconer's Venetian blind constructions, the projection argument combines exact collinear cylinder subfamilies, quantitative distribution of word orientations, and a cover indexed by first successful symbolic blocks. The rotation parameters are explicit powers of a real algebraic number, linking the construction to simultaneous Diophantine approximation. The capacity estimate follows from the curvature characterization of analytic capacity and a telescoping inequality for cylinder masses.
\end{abstract}
\maketitle
\clearpage
\begingroup
\linespread{1.08}\selectfont
\makeatletter
\def\l@section{\@tocline{1}{0.7\baselineskip}{1pc}{}{\bfseries}}
\def\l@subsection{\@tocline{2}{0.12\baselineskip}{1pc}{5pc}{}}
\makeatother
\tableofcontents
\endgroup
\clearpage

\section{Introduction}\label{sec:introduction}
For $\theta\in[0,\pi)$, let
\[
 \pi_\theta(z):=\operatorname{Re}(e^{-i\theta}z),\qquad z\in\C,
\]
denote the scalar coordinate of orthogonal projection onto the line making angle $\theta$ with the real axis. The \emph{Favard length} of a compact set $K\subset\C$ is the quantity
\begin{equation}\label{eq:favard-definition}
 \Fav(K):=\frac1\pi\int_0^\pi\cH^1(\pi_\theta(K))\,d\theta.
\end{equation}
We use the closed neighbourhood
\begin{equation}\label{eq:neighbourhood-definition}
 \Nbd{\delta}{K}:=\{z\in\C:\dist(z,K)\le\delta\},
\end{equation}
so that all neighbourhoods considered below are compact. The same projection upper bounds hold for open neighbourhoods.

The Besicovitch--Federer projection theorem \cite[Theorem~18.1]{Mattila1995} states that, for a compact set with positive and finite one-dimensional Hausdorff measure, having zero Favard length is equivalent to being purely $1$-unrectifiable. Thus, for such a purely $1$-unrectifiable set, one has
\[
 \Fav(\Nbd{\delta}{K})\longrightarrow0\qquad\text{as }\delta\rightarrow0.
\]
A quantitative form of this statement asks how quickly the average projected length tends to zero. For one-dimensional self-similar sets, a natural class of estimates consists of the \emph{power laws}
\begin{equation}\label{eq:intro-powerlaw}
 \Fav(\Nbd{\delta}{K})\lesssim (\log\delta^{-1})^{-\alpha}.
\end{equation}
The power is taken in the logarithm of the spatial scale: if the common contraction factor of the defining similarities is $L^{-1}$ and $\delta=L^{-N}$, then the right side of \eqref{eq:intro-powerlaw} is comparable to $N^{-\alpha}$.

The purpose of this article is to give explicit deterministic examples for which the exponent in \eqref{eq:intro-powerlaw} can be chosen arbitrarily close to one. Moreover, our estimates are uniform in the projection direction before angular averaging. We also calculate the analytic capacity of the corresponding neighbourhoods, which allows us to compare these two quantities at every sufficiently small scale.

For a compact set $K\subset\C$, its \emph{analytic capacity} is
\begin{equation}\label{eq:intro-capacity-definition}
 \gamma(K):=\sup\bigl\{|f'(\infty)|:
 f\text{ is analytic on }\C\setminus K,\ \|f\|_\infty\le1,
 \ f(\infty)=0\bigr\},
\end{equation}
where $f'(\infty):=\lim_{z\to\infty}zf(z)$.

\subsection{Exponents arbitrarily close to one}\label{subsec:intro-family}
Our main result supplies, for each $\alpha\in(0,1)$, an explicit finite iterated function system $\cF_\alpha$. The proof uses the number of rotating similarities to control the quantitative distribution of word orientations. Increasing this number, together with the number of collinear similarities, yields every exponent in $(0,1)$.

\begin{theorem}\label{thm:arbitrary-exponent}
For every $\alpha\in(0,1)$ there is an explicitly specified finite family $\cF_\alpha$ of planar similarities, all with the same contraction factor and satisfying the strong separation condition, whose attractor $E_\alpha \subset \mathbb{R}^2$ has the following properties:
\begin{enumerate}[(1)]
 \item $E_\alpha$ is noncollinear, $\dim_HE_\alpha=1$, and
 $0<\cH^1(E_\alpha)<\infty$.
 \medskip
 \item For every sufficiently small $\delta>0$,
 \begin{equation}\label{eq:arbitrary-projection}
 \sup_{0\le\theta<\pi}\cH^1\bigl(\pi_\theta(\Nbd{\delta}{E_\alpha})\bigr)
 \lesssim_\alpha(\log\delta^{-1})^{-\alpha}.
 \end{equation}
 \item For every sufficiently small $\delta>0$, the analytic capacity satisfies
 \begin{equation}\label{eq:arbitrary-capacity}
 \gamma(\Nbd{\delta}{E_\alpha})\asymp_\alpha
 (\log\delta^{-1})^{-1/2}.
 \end{equation}
\end{enumerate}
The family $\cF_\alpha$ is fixed independently of $\delta$; however, the constants and the defining family may depend on $\alpha$.
\end{theorem}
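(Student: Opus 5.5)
The plan is to build $\cF_\alpha$ from two blocks of maps sharing the contraction ratio $L^{-1}$. The first block is a \emph{collinear} family of $m$ maps $z\mapsto L^{-1}z+a_j$ with translations $a_j$ on the real axis. The second block is a \emph{rotating} family of $r$ maps $z\mapsto L^{-1}e^{i\phi_k}z+b_k$, where the angles $\phi_k$ are chosen so that the numbers $e^{i\phi_k}$ are explicit powers $\lambda^{k}$ of a fixed real algebraic number (realised through $\cos\phi_k$). We take $m+r=L$ and place the translations on a well-separated grid. Then the open set condition holds with strong separation, and the similarity dimension is $\log L/\log L=1$. Part (1) follows from Hutchinson's theorem, which gives $0<\cH^1(E_\alpha)<\infty$. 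Noncollinearity holds because at least one $\phi_k\notin\pi\Z$ and the $b_k$ are off the axis.

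For part (2), fix $\theta$ and $N$ with $\delta\approx L^{-N}$. Each word $w$ of length $N$ carries an orientation $\Phi(w)=\sum_{i}\phi_{w_i}$. Split each word into consecutive blocks of length $\ell$. Call a block \emph{successful for $\theta$} if it is a word in the collinear letters whose first-level image, rotated by the accumulated orientation $\Phi$, projects under $\pi_\theta$ onto an interval much shorter than $L^{-\ell}\cdot\#\{\text{cylinders}\}$. This is the Venetian-blind mechanism: exact collinear subfamilies give overlap whenever $\Phi+0$ lies within $L^{-\ell}$ of $\theta+\pi/2$ modulo $\pi$.

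The next step is a quantitative equidistribution statement for $\Phi$ along random words. The orientations form a random walk on $\T$ with steps $\phi_k$. By the Diophantine (badly approximable) properties of the vector $(\phi_1,\dots,\phi_r)$ coming from the algebraic choice, and a Schmidt-type subspace bound, the walk hits any arc of length $\eta$ within $\lesssim \eta^{-1/r'}$ steps, uniformly in $\theta$. Here the exponent improves as $r$ grows.

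We then cover $\pi_\theta(\Nbd{\delta}{E_\alpha})$ using a stopping-time partition indexed by the \emph{first successful block}. Words that have had a successful block by level $N$ contribute projected length saved by a fixed factor. A telescoping/martingale estimate then shows that the measure of words with no success in $N/\ell$ blocks decays like $N^{-\alpha}$, once $m,r,\ell$ are tuned to $\alpha$. The main obstacle is this step. It requires the probability of success per block to be $\gtrsim$ (block index)$^{-(1-\alpha)}$ uniformly in $\theta$, which is where the Diophantine input must be sharp enough. I would first try using Baker-type lower bounds for linear forms in logarithms of the algebraic number to make the hitting estimate uniform.

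For part (3), use Melnikov–Verdera and Tolsa's theorem: $\gamma(K)\asymp\sup\{\mu(K)^{2}/(\mu(K)+c^2(\mu))\}$ over $\mu$ with linear growth. For the lower bound, take $\mu$ to be the natural measure smoothed at scale $\delta$. Self-similarity gives $c^2(\mu|_{\text{scale }L^{-n}})\approx$ constant per scale, since $E_\alpha$ is noncollinear, so a fixed positive fraction of triples are non-flat at each scale. Summing over $N$ scales gives $c^2\asymp N$ and hence $\gamma\gtrsim N^{-1/2}$. For the upper bound, fix an arbitrary measure $\mu$ of linear growth on $\Nbd{\delta}{E_\alpha}$. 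Use the telescoping inequality for cylinder masses $\mu(Q_w)$: at each level, the curvature contributed by the children of $Q_w$ is $\gtrsim \mu(Q_w)^{3}/\diam(Q_w)^{2}$ minus a defect controlled by differences of masses between levels. Summing over levels yields $c^2(\mu)\gtrsim N\mu(K)^3/\mu(K)^{2}\cdot(\ldots)$, which gives $\gamma\lesssim N^{-1/2}$.
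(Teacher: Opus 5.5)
Your outline has the paper's architecture: exact collinear subfamilies, algebraic rotations, equidistribution of word orientations, a first-success stopping cover, and Tolsa's characterization with a telescoping mass argument. But part (2), the heart of the theorem, has a genuine gap. Below, $q$ is the number of rotating maps and $r$ the number of collinear ones (your $r$ and $m$).

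\emph{The Diophantine hypothesis is placed on the wrong object.} The argument needs the rotation numbers $\tau_k=\phi_k/2\pi$ themselves to satisfy $\|k\tau\|_{\T^q,\infty}\ge b|k|^{-1/q}$. The paper takes $\tau_\ell=\xi_q^\ell$ with $\xi_q=2^{1/(q+1)}$, i.e.\ maps $z\mapsto L^{-1}\ee{\xi_q^\ell}z+\cdots$, and proves this bound by a determinant and Hadamard argument. You instead make $e^{i\phi_k}$ algebraic (through $\cos\phi_k$). If it is not a root of unity, then $\phi_k/\pi$ is transcendental by Gelfond--Schneider, so the subspace theorem does not apply. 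Baker's theory then gives only $\|k\tau_k\|_\T\gtrsim k^{-C}$ with a large $C$ and no simultaneous exponent $1/q$, and the resulting discrepancy rate yields only small exponents. With the right choice, what you need is $\Disc(\delta_a*\sigma^{*J})\lesssim J^{-q/2}$, uniformly in the incoming orientation $a$, for the walk with holding probability $r/L$. The paper gets this from $|\widehat\sigma(k)|^J\le e^{-cJ\|k\tau\|_{\T^q,\infty}^2}$, a packing bound for the points $k\tau$, and Erd\H{o}s--Tur\'an. It gives mass $\ge\eta$ on every arc of length $2\eta$ after a fixed number $J\asymp\eta^{-2/q}$ of steps; a hitting-time statement is not what the cover uses.

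\emph{Your block bookkeeping does not close.} Suppose a block is a run of collinear letters tested against the orientation accumulated before it. Conditionally on the earlier blocks, the success probability is then either $(r/L)^\ell$ or $0$, and nothing makes it uniformly positive. Between consecutive tests the orientation moves by $\sigma^{*\ell}$, which has $O(\ell^q)$ atoms, so most arcs of width $L^{-\ell}$ receive no mass. Moreover, a fixed saving factor cannot produce decay. A per-block success probability $\gtrsim j^{-(1-\alpha)}$ is neither available nor the source of the power law.

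In the paper a block is $uv$, with a mixing part $|u|=J_n\asymp L^{2n/q}$ and a test part $v\in\cH^n$; the target arc has width $L^{-n}/\pi$. Then $\Prob(A_j\mid\mathcal G_{j-1})\ge p_n=\frac{1}{2\pi}r^nL^{-2n}$, uniformly in $\theta$ and in the past. A success compresses $r^n$ cylinders into one interval, so the cost is at most $4r^{-n}+2e^{-p_nB}$. The run length $n$ must grow with the depth. Taking $B_n\asymp n/p_n$ gives cost $6r^{-n}$ at depth $M_n\asymp n\,(L^{2+2/q}/r)^n$. This yields $N^{-\beta}(\log N)^\beta$ with $\beta=\log r/\log(L^{2+2/q}/r)$, which tends to $q/(q+2)$ as $L\to\infty$. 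Exponents near one therefore require $q$ large and $\log_Lr$ close to $1$; the paper fixes $q>2\alpha/(1-\alpha)$ and takes $L$ large. This tradeoff is exactly what your sketch leaves open.

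Your capacity argument follows the paper in outline, with two fixes needed. Plugging the smoothed natural measure (mass $1$, $c^2\asymp N$) directly into $\mu(K)^2/(\mu(K)+c^2(\mu))$ gives only $N^{-1}$, so you must first rescale it by $N^{-1/2}$. The upper bound needs three ingredients: the quadratic potential $\sum_{|\omega|=k}L^{k}\mu(D_\omega)^2\le\mu(K)$, whose increments are variances; a robust noncollinear triple of cylinders at a fixed depth; and H\"older's inequality. Together these give $N\mu(K)^3\lesssim c^2(\mu)+\mu(K)$.
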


The exponent one in Theorem~\ref{thm:arbitrary-exponent} is a natural boundary, as all of the sets constructed in this article satisfy
\begin{equation}\label{eq:intro-universal-lower}
 \Fav(\Nbd{\delta}{E_\alpha})\gtrsim_\alpha
 (\log\delta^{-1})^{-1}.
\end{equation}
This is consistent with Mattila's general lower bound \cite{Mattila1990}; we give a direct proof for our cylinder constructions in Proposition~\ref{prop:favard-lower}. The theorem supplies a different fixed finite system for each requested exponent. It does not assert that a single finite system satisfies \eqref{eq:arbitrary-projection} for every $\alpha<1$, nor does it assert the endpoint $\alpha=1$.

\subsection{A completely specified example}\label{subsec:intro-example}
Theorem~\ref{thm:arbitrary-exponent} admits a particularly simple realization with sixteen similarities whose associated projections admit a uniform power law with exponent $53/100$. The family realizing this is defined as follows.

Let
\[
 \xi:=2^{1/4},\qquad
 (v_1,v_2,v_3):=\left(-\frac12+\frac i2,\frac i2,\frac12+\frac i2\right),
\]
and consider the sixteen similarities
\begin{equation}\label{eq:intro-sixteen-maps}
 \begin{split}
 H_j(z)&:=\frac z{16}+\frac{j-6}{7},\qquad 0\le j\le12,\\
 T_\ell(z)&:=\frac{\ee{\xi^\ell}}{16}z+v_\ell,
       \qquad 1\le\ell\le3.
 \end{split}
\end{equation}
We then let $E \subset \mathbb{C}$ denote their unique nonempty compact invariant set. Every similarity has contraction factor $1/16$, and one can check explicitly that the images of the closed unit disk in $\mathbb{C}$ are all pairwise disjoint and lie strictly inside that disk, so the system satisfies the strong separation condition. In particular, the set has Hausdorff dimension one and positive finite length. We verify each of these assertions in Section~\ref{sec:explicit}.

Our main result for this example is the following.
\begin{theorem}\label{thm:explicit-main}
Let $E \subset \mathbb{C}$ be the attractor of \eqref{eq:intro-sixteen-maps}. Then there exist constants $c,C>0$ and $\delta_0\in(0,1)$ such that, for every $\delta\in(0,\delta_0)$, one has
\begin{align}
 \sup_{0\le\theta<\pi}\cH^1\bigl(\pi_\theta(\Nbd{\delta}{E})\bigr)
 &\le C(\log\delta^{-1})^{-53/100},\label{eq:explicit-main-projection}\\
 c(\log\delta^{-1})^{-1/2}
 \le\gamma(\Nbd{\delta}{E})
 &\le C(\log\delta^{-1})^{-1/2}.\label{eq:explicit-main-capacity}
\end{align}
Consequently,
\begin{equation}\label{eq:explicit-main-comparison}
 \Fav(\Nbd{\delta}{E})
 \le C\gamma(\Nbd{\delta}{E})^{53/50}.
\end{equation}
The set $E$ is purely $1$-unrectifiable and satisfies $0<\cH^1(E)<\infty$.
\end{theorem}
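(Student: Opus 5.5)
The proof will follow the three-part structure of Theorem~\ref{thm:arbitrary-exponent}: the projection bound is specialised to the sixteen-map system with $13$ collinear maps and $3$ rotating maps, and then combined with the capacity law.

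\textbf{Step 1: verify the hypotheses.} I would first check strong separation directly. The translations $(j-6)/7$ and $v_\ell$ are separated by at least $1/7>2/16$, and all images $\tfrac1{16}\overline{\mathbb D}+b$ lie inside $\overline{\mathbb D}$. Moran's theorem, with $16$ maps of ratio $1/16$, then gives $\dim_HE=1$ and $0<\cH^1(E)<\infty$.

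\textbf{Step 2: the projection bound.} Fix a direction $\theta$ and work at scale $\delta=16^{-N}$. Write each word's orientation as $\sum_i\xi^{\ell_i}\bmod1$, summed over the rotating letters of the word. The cylinders built only from the $H_j$ are exactly collinear and parallel to the real axis; I will call these Venetian blind blocks. A block of $m$ letters $H_j$ sitting under a word of orientation $\phi$ projects onto a short interval whenever $\theta$ is nearly orthogonal to $\phi$. Concretely, the $13^m$ subcylinders then cover only about $(13/16)^m$ of the naive projected length, and this saving is what makes the bound nontrivial.

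To use it, I cover $E$ by cylinders indexed by the first position at which a successful symbolic block occurs. A block is successful when the accumulated orientation lies within roughly $16^{-m}$ of $\theta+\pi/2$ and is followed by enough $H$-letters. The covering argument then reduces to two estimates:
\begin{itemize}
\item the cylinders that achieve success give projected length a factor $c<1$ smaller than trivial;
\item the words that never succeed by depth $N$ have total mass decaying like $N^{-\alpha}$.
\end{itemize}

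The second estimate requires quantitative equidistribution of the random walk $\sum\xi^{\ell_i}$ mod~$1$, uniformly in $\theta$. I would obtain this from the Diophantine fact that $(\xi,\xi^2,\xi^3)$ is badly approximable, since $\xi=2^{1/4}$ generates a quartic field. That gives discrepancy bounds via Erdős–Turán estimates on the characteristic function $\prod$ over letters. Choosing the hitting window so that the walk reaches it with probability about $1/n$ per step, with $13$ parallel maps and rotating-letter density $3/16$, produces the decay exponent $\alpha=53/100$ from an explicit optimisation of these constants.

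\textbf{Step 3: capacity and comparison.} For $\gamma$, I invoke the curvature characterization $\gamma(K)\asymp\sup\{\mu(K):\mu(B(x,r))\le r,\ c^2(\mu)\le\mu(K)\}$, applied to the natural measure on $\Nbd{\delta}{E}$. Noncollinearity forces every level to contribute $\asymp\mu(K)$ to $c^2$, so $c^2(\mu)\asymp N\mu(K)$ and normalising gives $\gamma\gtrsim N^{-1/2}$. The matching upper bound follows from a telescoping inequality that bounds the scale contributions from below for any admissible measure, via cylinder masses and Cauchy–Schwarz. Finally, $\Fav\le\sup_\theta\cH^1(\pi_\theta(\cdot))\lesssim N^{-53/100}=(N^{-1/2})^{53/50}\lesssim\gamma^{53/50}$. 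The inequality \eqref{eq:explicit-main-comparison} on $[\delta_0,1)$ is trivial by compactness. Pure unrectifiability follows from $\Fav(E)=0$ together with Besicovitch–Federer.

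\textbf{Main obstacle.} The main obstacle is the uniform-in-$\theta$ hitting estimate with an explicit exponent. The discrepancy constants for $2^{1/4}$ must be made effective and good enough that the first-success cover gives an exponent exceeding $1/2$.
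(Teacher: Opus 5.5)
\textbf{There is a genuine gap.} Your Step~1 and the final comparison are fine, and your outline shares the paper's architecture: exact collinear $H$-blocks, a cover indexed by the first successful block, equidistribution of orientations from the badly approximable vector $(\xi,\xi^2,\xi^3)$ via Erd\H{o}s--Tur\'an, and Tolsa's curvature characterization with a telescoping upper bound. However, the quantitative core of Step~2, which carries the entire content of the exponent $53/100$, is missing or misstated.

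\textbf{The gain per success.} A success does not save a fixed factor $c<1$. The group $\{\omega v:v\in\cH^n\}$ has mass $13^n16^{-(|\omega|+n)}$ and is covered by one interval of length $4\cdot16^{-(|\omega|+n)}$. So it costs $4\cdot13^{-n}$ per unit mass, and this factor must tend to zero. In a first-success cover with only a fixed gain $c$, the bound is at least a fixed multiple of the successful mass, which tends to one, so nothing decays. The number $(13/16)^n$ is not the gain but a cost: it is the probability that the collinear part lies in $\cH^n$. It enters the per-block success probability $p_n=\eta_n(13/16)^n\asymp13^n16^{-2n}$, and the cover gives $\cH^1(\pi_\theta(E_{Bs_n}))\le4\cdot13^{-n}+2(1-p_n)^B$. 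The window $\eta_n\asymp16^{-n}$ is forced by the collinear length $n$. Hence the hitting probability per block is $\asymp\eta_n$ once the orientation part exceeds the mixing length; there is no tunable probability $1/n$.

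\textbf{The rate of equidistribution.} The exponent is governed by this \emph{rate}, which you never state. You need $\Disc(\delta_a*\sigma^{*J})\lesssim J^{-3/2}$, uniformly in the incoming orientation $a$. This comes from $|\widehat\sigma(k)|^J\le e^{-a_0J\|k\tau\|_{\T^3,\infty}^2}$ together with the separation, at scale $\asymp Q^{-1/3}$ in $\T^3$, of the points $k\tau$ with $Q\le k<2Q$. The bookkeeping then runs as follows:
\begin{itemize}
\item the mixing length is $J_n\asymp16^{2n/3}$;
\item $B_n\asymp n/p_n$ blocks push the residual mass below $13^{-n}$;
\item the depth is $M_n\asymp n(16^{8/3}/13)^n$;
\item so $13^{-n}$ becomes $N^{-\beta_*}(\log N)^{\beta_*}$ with $\beta_*=\log13/((8/3)\log16-\log13)\approx0.5312$.
\end{itemize}
With a rate $J^{-\kappa}$ the same bookkeeping gives $\log13/((2+1/\kappa)\log16-\log13)$, which exceeds $53/100$ only when $\kappa>1.49$. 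So the power $q/2=3/2$ is indispensable. It comes from $q=3$ as the dimension of the torus, not from the density $3/16$, which only affects $a_0$. By contrast, $b$ and $C_{\mathrm{disc}}$ enter only the constant $C$, so the ``main obstacle'' you name (making the constants for $2^{1/4}$ effective) is not needed.

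\textbf{Capacity.} Two points need fixing.
\begin{itemize}
\item The lower bound needs only $c^2(\mu_N)\lesssim N+1$ for a measure smeared at scale $\delta$. This follows from linear growth plus $\mu_N(B(z,t))\lesssim t^2/\delta$, without noncollinearity. The self-similar measure on $E$ itself has infinite curvature, so it cannot be used.
\item For the upper bound, an arbitrary admissible $\nu$ gives curvature $\gtrsim m_\omega^3/\ell_\omega^2$ only at nodes whose three distinguished children each carry a fixed fraction of $m_\omega$. The remaining nodes must be charged to the increments of $P_k=\sum_{|\omega|=k}m_\omega^2/\ell_\omega$, using $m_\omega\le\ell_\omega$. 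Only then does H\"older give $M^3$ per level and hence $mM^3\lesssim M$. Your sketch does not contain this balanced/unbalanced dichotomy.
\end{itemize}
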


The exponent $53/100$ is a convenient rational choice. The proof actually gives every exponent strictly below
\begin{equation}\label{eq:explicit-beta}
 \beta_*:=\frac{\log13}{(8/3)\log16-\log13}
 =0.5311971237\ldots,
\end{equation}
with a logarithmic factor at the endpoint in the more precise estimate; see Corollary~\ref{cor:sixteen-endpoint}. In particular, \eqref{eq:explicit-main-comparison} gives
\begin{equation}\label{eq:explicit-little-o}
 \frac{\Fav(\Nbd{\delta}{E})}{\gamma(\Nbd{\delta}{E})}
 \lesssim(\log\delta^{-1})^{-3/100}\longrightarrow0
\end{equation}
as $\delta \rightarrow 0$. Our work thus furnishes explicit deterministic self-similar sets whose $\delta$-neighbourhoods satisfy, for all sufficiently small $\delta$, a stronger comparison than that predicted by the quantitative Vitushkin conjecture \cite[Eq.~(1.7)]{CT2020}, which asserts that there is an absolute constant \(C\) such that \(\operatorname{Fav}(K)\le C\gamma(K)\) for every compact set \(K\subset\mathbb C\).

The exponent in the capacity comparison can be taken arbitrarily close to two, as the following corollary shows.

\begin{corollary}\label{cor:near-quadratic}
For every $p\in(1,2)$ there is an explicit deterministic homogeneous self-similar set $E_p\subset\C$, of Hausdorff dimension one and satisfying the strong separation condition, such that
\begin{equation}\label{eq:near-quadratic}
 \Fav(\Nbd{\delta}{E_p})
 \lesssim_p\gamma(\Nbd{\delta}{E_p})^p
 \qquad \forall \delta\in(0,1).
\end{equation}
Moreover, $\gamma(\Nbd{\delta}{E_p})\to0$ as $\delta\rightarrow0$.
\end{corollary}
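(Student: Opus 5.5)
Given $p\in(1,2)$, we set $\alpha:=p/2\in(1/2,1)$ and take $E_p:=E_\alpha$, the attractor provided by Theorem~\ref{thm:arbitrary-exponent}. Properties of $E_\alpha$ already established there include that it is an explicit deterministic homogeneous self-similar set satisfying the strong separation condition, with $\dim_H E_\alpha=1$. What remains is to combine the projection bound \eqref{eq:arbitrary-projection} with the capacity law \eqref{eq:arbitrary-capacity}, first for small $\delta$, and then to extend the inequality to all $\delta\in(0,1)$.

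For small $\delta$ we argue as follows. By \eqref{eq:favard-definition}, the Favard length is an average over $\theta$. It is therefore bounded by the supremum over directions, which gives
\[
\Fav(\Nbd{\delta}{E_\alpha})\le\sup_{\theta}\cH^1\bigl(\pi_\theta(\Nbd{\delta}{E_\alpha})\bigr)\lesssim_\alpha(\log\delta^{-1})^{-\alpha}
\]
for all $\delta<\delta_0$. The lower half of \eqref{eq:arbitrary-capacity} yields $(\log\delta^{-1})^{-1/2}\lesssim_\alpha\gamma(\Nbd{\delta}{E_\alpha})$. Raising this to the power $2\alpha=p$ gives
\[
(\log\delta^{-1})^{-\alpha}\lesssim_\alpha\gamma(\Nbd{\delta}{E_\alpha})^{p},
\]
which establishes \eqref{eq:near-quadratic} for $\delta<\delta_0$. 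The upper half of \eqref{eq:arbitrary-capacity} gives $\gamma(\Nbd{\delta}{E_\alpha})\lesssim(\log\delta^{-1})^{-1/2}\to0$ as $\delta\to0$, which is the final assertion of the corollary.

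The only remaining point, and the main (though mild) obstacle, is the range $\delta\in[\delta_0,1)$, where the asymptotic laws are not available. Here we use compactness-type bounds. On the left side, $\Nbd{\delta}{E_\alpha}$ lies in a fixed disk of radius $\diam E_\alpha+1$, so $\Fav(\Nbd{\delta}{E_\alpha})\le 2(\diam E_\alpha+1)$. On the right side, analytic capacity is monotone under inclusion, so
\[
\gamma(\Nbd{\delta}{E_\alpha})\ge\gamma(\Nbd{\delta_0}{E_\alpha})\ge\gamma\bigl(\overline{D}(z_0,\delta_0)\bigr)=\delta_0>0
\]
for any $z_0\in E_\alpha$. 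Consequently the ratio $\Fav/\gamma^p$ is bounded by $2(\diam E_\alpha+1)\delta_0^{-p}$ on this range. Taking the maximum of this constant and the constant obtained for small $\delta$ gives \eqref{eq:near-quadratic} for all $\delta\in(0,1)$, with the implied constant depending only on $p$.
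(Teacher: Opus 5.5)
Your proposal is correct and follows essentially the same route as the paper. The paper likewise takes $\alpha=p/2$ in Theorem~\ref{thm:arbitrary-exponent} and combines the projection bound with the lower capacity bound for small $\delta$. It then handles $\delta\in[\delta_0,1)$ by a uniform bound on the Favard length together with monotonicity of $\gamma$ applied to a disk of radius $\delta_0$ contained in $\Nbd{\delta}{E_\alpha}$. Your constant $2(\diam E_\alpha+1)$ in place of the paper's $4$, and your explicit use of the upper capacity bound for the final assertion, are only cosmetic differences.
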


\begin{proof}
Set $\alpha=p/2$ in Theorem~\ref{thm:arbitrary-exponent} and apply the lower bound in \eqref{eq:arbitrary-capacity}. This proves \eqref{eq:near-quadratic} for $\delta\in(0,\delta_p)$, with some $\delta_p\in(0,1)$. For $\delta\in[\delta_p,1)$, the set $\Nbd{\delta}{E_p}$ contains a disk of radius $\delta_p$, so monotonicity gives $\gamma(\Nbd{\delta}{E_p})\ge\delta_p$. Since the construction lies in the unit disk, $\Fav(\Nbd{\delta}{E_p})\le4$. Enlarging the comparison constant proves the estimate on the remaining scales.
\end{proof}

\begin{remark}
Both Favard length and analytic capacity are homogeneous of degree one. For $D:=\diam(E_p)$, the scale-invariant form of \eqref{eq:near-quadratic} is
\[
 \frac{\Fav(\Nbd{\delta}{E_p})}{D}
 \lesssim_p\left(\frac{\gamma(\Nbd{\delta}{E_p})}{D}\right)^p.
\]
Equivalently, the unnormalized right side contains the factor $D^{1-p}$.
\end{remark}
\subsection{Related work: projections and capacities of self-similar sets}
\subsubsection{Deterministic projection estimates}
Peres and Solomyak \cite{PS2002} initiated the quantitative projection problem for deterministic self-similar sets. Nazarov, Peres, and Volberg \cite{NPV2010} introduced a Fourier analytic method giving a power law for the four-corner Cantor set. \L aba and Zhai \cite{LZ2010} extended it to product Cantor sets with a positive-length projection. Bond, \L aba, and Volberg \cite{BLV2014} obtained the bound $CN^{-c/\log\log N}$ for rational product Cantor sets under arithmetic hypotheses, and power laws under additional assumptions. \L aba and the author \cite{LabaMarshall2022} extended these estimates to further digit sets through new bounds for vanishing sums of roots of unity. The author \cite{Marshall2026} subsequently established power laws for rational product Cantor sets in every dimension $d\ge2$, including the four-corner bound $N^{-1/5+u}$ for every $u>0$. These constructions use homotheties without rotations; see \L aba's survey \cite{Laba2015} for the Fourier and number-theoretic background.

For separated noncollinear homogeneous disk constructions without rotations, with $L$ maps of contraction factor $L^{-1}$, Bond and Volberg \cite[Theorem~1]{BondVolberg2012} proved
\begin{equation}\label{eq:bond-volberg-bound}
 \Fav(\mathcal G_N)\le C\exp\bigl(-c\sqrt{\log N}\bigr),
 \qquad N\ge2,
\end{equation}
where $\mathcal G_N$ is the $N$th iterate. No product structure is required. The formulation in \cite[Example~4 and Theorem~1.2(iii)]{Laba2015} uses similarities $z\mapsto L^{-1}z+z_j$. At scale $\delta=L^{-N}$, this gives $C\exp(-c\sqrt{\log\log(\delta^{-1})})$.

For systems with dense rotations, Ero\u glu \cite[Theorem~3.1]{Eroglu2007} obtained a direction-uniform negative power of $\log\log(\delta^{-1})$ under a Diophantine hypothesis. His argument groups relatively close cylinders and finds prefixes that orient them toward a prescribed line. We instead use a collinear subfamily and quantitative distribution of word orientations, adapting the discrepancy method of Hensley and Su \cite{HS2004} to the one-sided walk with a holding probability arising here. First-success selection then controls the covering cost through the masses of prefix-free stopped words.

\subsubsection{Prescribed and nonlinear projections}
The geometric compression has an analogue in Falconer's Venetian blind construction \cite[Section~2, Lemma~2.1]{Falconer1986}: tilted segments preserve selected projections while reducing others. Here a collinear group compresses when its supporting line is nearly parallel to the projection kernel, within a fixed strongly separated system.

Falconer's prescribed-projection theorem, underlying the digital sundial construction, realizes measurable families of shadows up to null sets in almost every direction \cite[Theorem~5.1]{Falconer1986}. Chang, McDonald, and Taylor \cite{CMT} proved a nonlinear counterpart yielding efficient coverings by translates of a $C^1$ graph with strictly monotone derivative. Chang, Fraser, McDonald, Meyer, and Taylor \cite{CFMMT2026} extended prescribed projections and efficient coverings to broader classes of maps and curves. These almost-everywhere prescriptions differ from the present bounds for every direction at each scale.

For quantitative nonlinear projection estimates, Cladek, Davey, and Taylor \cite{CDT2022} treated the Favard curve length of four-corner approximations; Davey and Taylor \cite{DT2022} quantified a Besicovitch theorem for curve projections; and Bongers and Taylor \cite{BT2023} developed energy estimates and lower bounds for transversal families, including radial visibility and Favard surface length. \L aba, McDonald, and Taylor \cite{LMT2026} established a local comparison principle transferring classical Favard bounds to smooth transversal families; their global self-similar application assumes homotheties without rotations. Li and Taylor \cite{LT2026} obtained a quantitative two-projection theorem. Taylor's survey \cite{Taylor2024} describes the connections with efficient coverings and fractal geometry.

\subsubsection{Random constructions and quantitative rectifiability}
Sharp inverse-logarithmic Favard estimates hold for several random constructions, including those of Chang, Shmerkin, and Suomala \cite{CSS2026}. The non-homogeneous random disc constructions of Chang, D\k{a}browski, and Del Nin \cite{CDD2026} also admit capacity estimates. Here every defining map is explicit and the same finite system is used at every generation; probability serves only to count symbolic addresses.

Chang, D\k{a}browski, Orponen, and Villa \cite{CDOV2024} obtained a different quantitative link with rectifiability: Favard length close to that of a segment forces approximation by a Lipschitz graph outside a small exceptional part, with polynomial control in the deficit.

\subsubsection{Analytic capacity and Vitushkin's question}
The remaining qualitative Vitushkin implication asks whether positive Favard length forces positive analytic capacity. Its quantitative form is
\begin{equation}\label{eq:vitushkin-question}
 \Fav(K)\lesssim\gamma(K)\qquad\text{for every compact }K\subset\C;
\end{equation}
see Chang and Tolsa \cite[Eq.~(1.7)]{CT2020}. Corollary~\ref{cor:near-quadratic} proves a superlinear comparison for the neighbourhoods of explicitly constructed fixed self-similar sets. The general inequality remains open. Conversely, positive analytic capacity need not imply positive Favard length, by examples of Jones--Murai \cite{JM1988} and Joyce--M\"orters \cite{JM2000}. Our limiting attractors have both quantities zero; the comparison concerns their neighbourhoods.

The square-root capacity decay is familiar from the four-corner construction \cite{MTV2003}. We establish it for the rotating systems here using Tolsa's curvature characterization \cite{Tolsa2003,Tolsa2005}. The proof applies to noncollinear homogeneous planar systems satisfying our disk separation assumptions, with exactly $L$ maps of contraction factor $L^{-1}$.

\subsection{Notation and organization}
Throughout the article, $q$ denotes the number of rotating similarities, not an ambient dimension. The ambient space is always $\C\cong\R^2$. We write $A\lesssim B$ if $A\le CB$, with a positive constant independent of the construction depth, the spatial scale, and the projection direction. Unless explicitly stated otherwise, constants may depend on the fixed iterated function system. Subscripts indicate additional dependence. The notation $A\asymp B$ means that both $A\lesssim B$ and $B\lesssim A$ hold. All logarithms are natural.

Section~\ref{sec:geometry} sets up the symbolic construction and its uniform geometry. Section~\ref{sec:mixing} proves the orientation estimate, and Section~\ref{sec:stacks} converts it into a projection bound. Section~\ref{sec:explicit} supplies the algebraic rotations and the explicit similarities. Section~\ref{sec:capacity} proves the capacity estimate. The main theorems are then assembled in Section~\ref{sec:conclusions}, where we also record the limits of the argument.

\section{Symbolic cylinders and their uniform geometry}\label{sec:geometry}
Let $L\ge2$ be an integer and let $\cA$ be an alphabet with exactly $L$ symbols. We consider similarities
\begin{equation}\label{eq:general-maps}
 F_a(z)=L^{-1}\ee{\tau_a}z+b_a,
 \qquad a\in\cA,
\end{equation}
where $\tau_a\in\T:=\R/\Z$ and $b_a\in\C$. The closed unit disk is denoted by $D:=\overline{B(0,1)}$.

\begin{definition}\label{def:disk-separation}
The family $\cF=\{F_a:a\in\cA\}$ satisfies the \emph{disk separation assumptions} if the disks $F_a(D)$ are pairwise disjoint and
\begin{equation}\label{eq:strict-interior}
 R:=\max_{a\in\cA}\bigl(|b_a|+L^{-1}\bigr)<1.
\end{equation}
We then put
\begin{equation}\label{eq:separation-constant}
 g:=\min_{a\ne a'}\dist(F_a(D),F_{a'}(D))>0.
\end{equation}
\end{definition}

The strict containment in \eqref{eq:strict-interior} is convenient when passing between construction stages and neighbourhoods. It is verified directly for all of the examples below.

For a finite word $\omega=a_1\cdots a_k\in\cA^k$, set
\[
 |\omega|:=k,\qquad F_\omega:=F_{a_1}\circ\cdots\circ F_{a_k},\qquad
 D_\omega:=F_\omega(D),\qquad \ell_\omega:=L^{-k}.
\]
The empty word $\varnothing$ has length zero and represents the identity map. We also write
\begin{equation}\label{eq:orientation-word}
 F_\omega(z)=\ell_\omega\ee{\tau_\omega}z+b_\omega,
 \qquad \tau_\omega=\tau_{a_1}+\cdots+\tau_{a_k}\pmod1.
\end{equation}
In particular, $\tau_{\omega u}=\tau_\omega+\tau_u$. This additive identity uses the fact that the similarities preserve orientation.

The \emph{Cantor iterates} and their attractor are
\begin{equation}\label{eq:iterate-definition}
 E_N:=\bigcup_{\omega\in\cA^N}D_\omega,
 \qquad E:=\bigcap_{N=0}^\infty E_N.
\end{equation}
They are nested compact sets. Standard contraction theory identifies $E$ as the unique nonempty compact set satisfying $E=\bigcup_{a\in\cA}F_a(E)$.

Let $\Omega:=\cA^{\N}$ be the space of infinite words, and give each symbol mass $1/L$. We denote the resulting product probability measure by $\Prob$. For a finite word $\omega$, its symbolic cylinder is
\[
 [\omega]:=\{\boldsymbol{\omega}\in\Omega:\boldsymbol{\omega}\text{ begins with }\omega\},
 \qquad \Prob([\omega])=L^{-|\omega|}=\ell_\omega.
\]
Let $\Pi:\Omega\to E$ be the coding map, and put $\mu:=\Pi_\sharp\Prob$. Disk separation makes the coding unique and gives
\begin{equation}\label{eq:natural-cylinder-mass}
 \mu(D_\omega)=\ell_\omega.
\end{equation}

\begin{lemma}[Prefix-free mass]\label{lem:prefix-free}
If $\cW$ is a finite family of words and no word in $\cW$ is a proper prefix of another, then
\begin{equation}\label{eq:kraft}
 \sum_{\omega\in\cW}L^{-|\omega|}\le1.
\end{equation}
\end{lemma}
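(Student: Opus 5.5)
This is Kraft's inequality, and the plan is to prove it with the symbolic measure $\Prob$ on $\Omega=\cA^{\N}$ introduced above. Every symbol has mass $1/L$, so for each finite word $\omega$ the cylinder satisfies $\Prob([\omega])=L^{-|\omega|}$. The whole argument therefore reduces to one claim: if $\cW$ is prefix-free, then the cylinders $[\omega]$, $\omega\in\cW$, are pairwise disjoint.

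First I verify the disjointness claim. Suppose $\omega\ne\omega'$ lie in $\cW$ and some infinite word $\boldsymbol{\omega}$ lies in $[\omega]\cap[\omega']$. Without loss of generality $|\omega|\le|\omega'|$. Then $\omega$ and $\omega'$ are both initial segments of $\boldsymbol{\omega}$, so $\omega$ equals the first $|\omega|$ symbols of $\omega'$, which makes $\omega$ a prefix of $\omega'$. If $|\omega|<|\omega'|$, the prefix is proper, contradicting the hypothesis. If $|\omega|=|\omega'|$, the two words coincide, contradicting $\omega\ne\omega'$.

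Then I sum. By finite additivity of $\Prob$,
\[
 \sum_{\omega\in\cW}L^{-|\omega|}=\sum_{\omega\in\cW}\Prob([\omega])=\Prob\Bigl(\bigcup_{\omega\in\cW}[\omega]\Bigr)\le\Prob(\Omega)=1,
\]
which is \eqref{eq:kraft}.

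If one prefers to avoid measure theory, there is an equivalent counting argument. Let $M:=\max_{\omega\in\cW}|\omega|$. Each $\omega\in\cW$ has exactly $L^{M-|\omega|}$ extensions of length $M$. By the same prefix argument, no word of length $M$ extends two distinct members of $\cW$. Hence $\sum_{\omega\in\cW}L^{M-|\omega|}\le L^M$, and dividing by $L^M$ gives the inequality.

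There is no real obstacle here. The only point needing care is the case of equal lengths in the disjointness step, which is handled by distinctness of the words. The empty word needs no separate treatment: if $\varnothing\in\cW$, then prefix-freeness forces $\cW=\{\varnothing\}$, and the sum equals $1$.
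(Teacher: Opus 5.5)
Your proof is correct and is essentially the paper's: the paper uses exactly your counting alternative (extensions to a common depth $M$ are pairwise disjoint by prefix-freeness, so their total number is at most $L^M$), and your cylinder-measure version is the same argument normalized by $L^{-M}$. One small point: in the counting variant, $M=\max_{\omega\in\cW}|\omega|$ is undefined when $\cW=\varnothing$; the paper treats that case separately, and your $\Prob$-argument covers it automatically.
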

\begin{proof}
The assertion is immediate if $\cW=\varnothing$. Otherwise, choose a common terminal depth
\[
M\ge\max_{\omega\in\cW}|\omega|.
\]
A word $\omega$ has exactly $L^{M-|\omega|}$ extensions to depth $M$. Prefix freeness makes these extension sets disjoint. Their total cardinality is at most $L^M$; dividing by $L^M$ proves \eqref{eq:kraft}.
\end{proof}

\begin{lemma}[Separation at the first different symbol]\label{lem:first-difference}
Suppose $\omega,u\in\cA^N$ are distinct and their longest common prefix has length $k$. If $x\in D_\omega$ and $y\in D_u$, then
\begin{equation}\label{eq:first-difference}
 gL^{-k}\le |x-y|\le2L^{-k}.
\end{equation}
\end{lemma}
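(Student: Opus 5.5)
Write $\omega=\eta a\omega'$ and $u=\eta a' u'$, where $\eta$ is the common prefix of length $k$ and $a\ne a'$ are the first differing symbols (such symbols exist since $\omega\ne u$ and both have length $N$, so $k<N$). Everything is then pulled back through the similarity $F_\eta$, which scales all distances by exactly $\ell_\eta=L^{-k}$. The nesting property $D_{\eta a\omega'}\subset D_{\eta a}=F_\eta(F_a(D))$ reduces both inequalities to statements about the first-level disks $F_a(D)$ and $F_{a'}(D)$.

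First, I would record nesting: for any word $v$, $F_v(D)\subset D$. This holds because $|F_b(z)|\le |b_b|+L^{-1}|z|\le R<1$ for $z\in D$, by \eqref{eq:strict-interior}; then induct on $|v|$. Applying $F_{\eta a}$ gives $D_\omega=F_{\eta a}(F_{\omega'}(D))\subset F_{\eta a}(D)$, and similarly $D_u\subset F_{\eta a'}(D)$. Hence $x=F_\eta(x')$ and $y=F_\eta(y')$ with $x'\in F_a(D)$ and $y'\in F_{a'}(D)$.

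Since $F_\eta$ is a similarity with ratio $L^{-k}$, we get $|x-y|=L^{-k}|x'-y'|$. For the lower bound, $|x'-y'|\ge \dist(F_a(D),F_{a'}(D))\ge g$ by \eqref{eq:separation-constant}. For the upper bound, $x',y'\in D$ by nesting, so $|x'-y'|\le\diam D=2$. Multiplying by $L^{-k}$ yields \eqref{eq:first-difference}.

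There is no real obstacle here. The only points needing care are the inclusion $F_b(D)\subset D$, which uses the strict containment assumption rather than disjointness alone, and the fact that the exact scaling of distances under $F_\eta$ requires the maps to be similarities. The rotation factor $\ee{\tau_\eta}$ plays no role, since it preserves distances.
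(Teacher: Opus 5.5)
Your proof is correct and matches the paper's argument: you pull back through the common-prefix similarity $F_\eta$, use the separation $g$ of the first-level disks $F_a(D)$ and $F_{a'}(D)$ for the lower bound, and use the diameter $2L^{-k}$ of $D_\eta$ for the upper bound. Your explicit check of the nesting $F_v(D)\subset D$ via $R<1$ supplies a detail that the paper leaves implicit.
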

\begin{proof}
Write $\omega=va \omega'$ and $u=va'u'$ with $|v|=k$ and $a\ne a'$. Both points lie in $D_v$, whose diameter is $2L^{-k}$. Their inverse images under $F_v$ lie in the two distinct first-level disks $F_a(D)$ and $F_{a'}(D)$, whose distance is at least $g$. Scaling by $L^{-k}$ gives the lower bound.
\end{proof}

\begin{proposition}[Uniform one-dimensional regularity]\label{prop:regularity}
There is a constant $C\ge1$, depending only on the fixed system, such that
\begin{equation}\label{eq:ahlfors-regularity}
 C^{-1}t\le\mu(B(x,t))\le Ct
 \qquad \forall x\in E,\ \forall t\in(0,\diam E].
\end{equation}
Consequently, $\dim_HE=1$ and $0<\cH^1(E)<\infty$.
\end{proposition}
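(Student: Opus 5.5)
The plan is to compare balls with cylinder disks using Lemma~\ref{lem:first-difference} and the mass formula \eqref{eq:natural-cylinder-mass}. Fix $x\in E$ with coding $\boldsymbol{\omega}=a_1a_2\cdots$, and write $\omega|_k$ for its prefix of length $k$. For $t\in(0,\diam E]$, choose the integer $k\ge0$ with $2L^{-k}\le t<2L^{-k+1}$. If this $k$ is negative, which can only happen when $\diam E$ is comparable to $2$, we simply take $k=0$. Since $x\in D_{\omega|_k}$ and $\diam D_{\omega|_k}=2L^{-k}\le t$, the disk $D_{\omega|_k}$ is contained in $B(x,t)$, up to the closed/open boundary distinction, which is handled by slightly shrinking. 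Hence
\[
\mu(B(x,t))\ge \mu(D_{\omega|_{k+1}})=L^{-k-1}\gtrsim t,
\]
where we pass to depth $k+1$ to avoid boundary issues. The implied constant depends only on $L$. When $t$ is near $\diam E$, we use instead $\mu(B(x,t))\ge L^{-1}$ and the fact that $\diam E\le2$.

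For the upper bound, choose $k\ge0$ maximal with $t<gL^{-k}$. If $t\ge g$, the bound is trivial, since $\mu\le1$ and $t\ge g$ give $\mu(B(x,t))\le g^{-1}t$. Otherwise $gL^{-k-1}\le t<gL^{-k}$. Let $y\in E\cap B(x,t)$ have coding $\boldsymbol{u}$. If the common prefix of $\boldsymbol{\omega}$ and $\boldsymbol{u}$ had length $j<k$, then Lemma~\ref{lem:first-difference}, applied to truncations of depth $N>j$ (both points lie in the respective depth-$N$ disks), would give $|x-y|\ge gL^{-j}\ge gL^{-k}>t$, a contradiction. Thus $y\in \Pi([\omega|_k])$, so $B(x,t)\cap E\subset D_{\omega|_k}$. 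Since $\mu$ is supported on $E$, this gives
\[
\mu(B(x,t))\le L^{-k}\le Lg^{-1}t.
\]

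The consequences then follow from the mass distribution principle and a covering argument. The upper bound $\mu(B(x,t))\le Ct$ gives $\cH^1(E)\gtrsim\mu(E)=1>0$ via the mass distribution principle. To apply it to arbitrary sets $U$ meeting $E$, center a ball of radius $\diam U$ at a point of $U\cap E$. Conversely, the covers $\{D_\omega:\omega\in\cA^N\}$ consist of $L^N$ sets of diameter $2L^{-N}$, so $\cH^1_{2L^{-N}}(E)\le 2$ and hence $\cH^1(E)\le2<\infty$. Together these give $\dim_HE=1$. The only mildly delicate step is bookkeeping: matching the range of $k$ with $t\le\diam E$ in the lower bound and the use of the unique coding. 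No step is a genuine obstacle.
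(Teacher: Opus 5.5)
Your proof is correct. The lower bound is the same as the paper's: a cylinder of diameter comparable to $t$ that contains $x$ lies in $B(x,t)$. The upper bound, however, takes a different route.

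The paper takes $L^{-k}\le t<L^{-k+1}$ and notes via Lemma~\ref{lem:first-difference} that distinct depth-$k$ disks are $gL^{-k+1}$-separated. A planar packing count then shows that $B(x,t)$ meets at most $C_{g,L}$ of these disks, each of mass $L^{-k}$. You instead apply Lemma~\ref{lem:first-difference} directly to the codings of $x$ and of an arbitrary $y\in E\cap B(x,t)$. This shows that, for $gL^{-k-1}\le t<gL^{-k}$, the ball meets $E$ only inside the single cylinder $F_{\omega|_k}(E)$ containing its center.

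Your localization needs no packing count and gives the explicit constant $L/g$ for every $t>0$. Its cost is that it genuinely uses that the center lies in $E$, whereas the packing count applies verbatim to balls centered anywhere in $\C$. Nothing downstream is lost, because the paper derives the arbitrary-center bound from the $E$-centered one via $B(z,t)\subset B(x,2t)$.

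One cleanup: since $\diam E\le2$, your index $k$ in the lower bound is automatically nonnegative. The estimate $\mu(B(x,t))\ge L^{-k-1}>t/(2L^2)$ already covers every $t\in(0,\diam E]$, so you can delete the side remarks about negative $k$ and about $t$ near $\diam E$.
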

\begin{proof}
For the upper bound choose $k$ so that $L^{-k}\le t<L^{-k+1}$. Distinct depth-$k$ disks are separated by at least $gL^{-k+1}$, by Lemma~\ref{lem:first-difference}. A packing argument in the plane shows that a ball of radius $t$ meets at most $C_{g,L}$ such disks. Each carries mass $L^{-k}$, so its mass is at most $C_{g,L}L^{-k}\le C_{g,L}t$.

For the lower bound, take the first $k$ for which $2L^{-k}<t$. The depth-$k$ cylinder containing $x$ lies in $B(x,t)$ and has mass $L^{-k}\ge t/(2L)$. This proves \eqref{eq:ahlfors-regularity}. The upper mass bound gives a positive lower bound for $\cH^1(E)$ by the mass distribution principle. On the other hand, the $L^k$ depth-$k$ disks cover $E$ and have total diameter $2$. Thus $\cH^1(E)\le2$, and the dimension assertion follows as well.
\end{proof}

We will also use the upper bound $\mu(B(z,t))\lesssim t$ for all centers $z\in\C$ and radii $t>0$. If such a ball meets $E$, choose $x\in E\cap B(z,t)$ and use $B(z,t)\subset B(x,2t)$. For radii exceeding $\diam E$, the bound follows from $\mu(\C)=1$ after enlarging the constant.

\subsection{Comparison with metric neighbourhoods}
\begin{lemma}[Fixed changes of construction depth]\label{lem:neighbourhood-comparison}
Under the disk separation assumptions, one has
\begin{equation}\label{eq:iterate-neighbourhood-inclusions}
 E_k\subset\Nbd{2L^{-k}}{E},\qquad
 \Nbd{\delta}{E}\subset E_k
 \quad\text{if }\delta \in (0,(1-R)L^{-k}].
\end{equation}
Consequently, for all sufficiently small $\delta>0$, there are integers $k_+(\delta)$ and $k_-(\delta)$ such that
\begin{equation}\label{eq:neighbourhood-sandwich}
 E_{k_+(\delta)}\subset\Nbd{\delta}{E}\subset E_{k_-(\delta)},
 \qquad k_\pm(\delta)=\log_L(\delta^{-1})+O_{\cF}(1).
\end{equation}
\end{lemma}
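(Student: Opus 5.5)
We prove the two inclusions in \eqref{eq:iterate-neighbourhood-inclusions} separately, using only the invariance $E=\bigcup_aF_a(E)$, the containment $F_a(D)\subset B(0,R)\subset D$ from \eqref{eq:strict-interior}, and the scaling of the maps $F_\omega$. Then we choose $k_\pm(\delta)$ explicitly.

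\emph{First inclusion.} Fix $z\in E_k$, so $z\in D_\omega$ for some $\omega\in\cA^k$. Since $E\subset D$ (as $E\subset E_0=D$), invariance iterated $k$ times gives $F_\omega(E)\subset E\cap D_\omega$, and this set is nonempty. The disk $D_\omega$ has diameter $2L^{-k}$, so $z$ lies within $2L^{-k}$ of a point of $E$. Hence $E_k\subset\Nbd{2L^{-k}}{E}$.

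\emph{Second inclusion.} The key observation is a strict-interior statement at every depth. For $|\omega|=k$ and each $a\in\cA$, the image $F_{\omega a}(D)=F_\omega(F_a(D))$ lies in $F_\omega(B(0,R))$. This is the disk concentric with $D_\omega$ of radius $RL^{-k}$. Since $E\cap D_\omega=F_\omega(E)\subset F_\omega(E_1)$, every point of $E\cap D_\omega$ lies at distance at least $(1-R)L^{-k}$ from $\C\setminus D_\omega$.

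Now let $\delta\le(1-R)L^{-k}$ and take $z$ with $\dist(z,E)\le\delta$. By compactness choose $x\in E$ with $|z-x|\le\delta$, and let $\omega$ be the depth-$k$ word with $x\in D_\omega$. Then $z\in\overline{B(x,(1-R)L^{-k})}$. Since $|x-b_\omega|\le RL^{-k}$, this closed ball is contained in $D_\omega$ by the triangle inequality. Hence $z\in E_k$.

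\emph{The sandwich.} For small $\delta$, set
\[
k_+(\delta):=\lceil\log_L(2\delta^{-1})\rceil,\qquad k_-(\delta):=\lfloor\log_L((1-R)\delta^{-1})\rfloor.
\]
With this choice $2L^{-k_+}\le\delta$, so monotonicity of neighbourhoods gives $E_{k_+}\subset\Nbd{2L^{-k_+}}{E}\subset\Nbd{\delta}{E}$. Also $\delta\le(1-R)L^{-k_-}$, so the second inclusion applies. We need $\delta$ small enough that $k_-\ge0$. Both integers equal $\log_L\delta^{-1}$ up to additive constants depending only on $L$ and $R$, which gives \eqref{eq:neighbourhood-sandwich}.

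The only mildly delicate step is the strict-interior argument in the second inclusion. It requires that a closed ball of radius $(1-R)L^{-k}$ about a point of $E\cap D_\omega$ stays inside $D_\omega$. This follows from $|b_a|+L^{-1}\le R$ transported by $F_\omega$, and the rest is routine.
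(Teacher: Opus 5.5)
Your proof is correct and follows the paper's argument essentially step for step. The first inclusion uses the same point of $F_\omega(E)$, the second transports $E\subset\overline{B(0,R)}$ by $F_\omega$, and you choose the same $k_\pm(\delta)$. One small fix: $F_a(D)$ lies in the closed disk $\overline{B(0,R)}$, not the open ball $B(0,R)$, since the maximum in \eqref{eq:strict-interior} is attained; your triangle-inequality step only needs the closed version, so nothing else changes.
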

\begin{proof}
If $z\in D_\omega$ with $|\omega|=k$, choose any $x\in F_\omega(E)$. Both points lie in $D_\omega$, so $|z-x|\le2L^{-k}$. This proves the first inclusion.

The first-level containment gives $E\subset RD$. If $x\in E$ has prefix $\omega$ of length $k$, then $x\in F_\omega(E)\subset F_\omega(RD)$. A point at distance at most $(1-R)L^{-k}$ from $x$ therefore lies in $D_\omega$. This proves the second inclusion. One may take
\[
 k_+(\delta)=\left\lceil\log_L(2/\delta)\right\rceil,
 \qquad k_-(\delta)=\left\lfloor\log_L((1-R)/\delta)\right\rfloor.
\]
For sufficiently small $\delta$, both are nonnegative and satisfy \eqref{eq:neighbourhood-sandwich}.
\end{proof}

This lemma will be used for both projection length and analytic capacity. It avoids estimating the change in length of a projection by enlarging every connected component separately.

\subsection{A lower bound for the Favard length}
\begin{proposition}\label{prop:favard-lower}
For every system satisfying the disk separation assumptions,
\begin{equation}\label{eq:iterate-favard-lower}
 \Fav(E_N)\gtrsim\frac1{N+1},\qquad N\ge0.
\end{equation}
The corresponding neighbourhood lower bound is \eqref{eq:intro-universal-lower}.
\end{proposition}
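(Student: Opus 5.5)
The plan is the standard $L^2$/Cauchy--Schwarz argument, run direction by direction. Fix $\theta$ and let $\mu$ be the natural measure from \eqref{eq:natural-cylinder-mass}. Set $\nu_\theta:=(\pi_\theta)_\sharp\mu_N$, where $\mu_N$ is the normalized sum of uniform measures on the depth-$N$ disks $D_\omega$; any probability measure carried by $E_N$ would do. Since $\nu_\theta$ is supported in $\pi_\theta(E_N)$, Cauchy--Schwarz gives
\[
 1=\nu_\theta(\pi_\theta(E_N))^2\le \cH^1(\pi_\theta(E_N))\int f_\theta^2,
\]
where $f_\theta$ is the density of $\nu_\theta$. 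It therefore suffices to show
\[
 \int_0^\pi\int f_\theta^2\,dx\,d\theta\lesssim N+1.
\]
Once this holds, Cauchy--Schwarz in $\theta$ (or Jensen applied to $1/x$) gives
\[
 \Fav(E_N)\ge \frac{1}{\pi}\int_0^\pi\Bigl(\int f_\theta^2\Bigr)^{-1}d\theta\ge \frac{\pi}{\int_0^\pi\int f_\theta^2}\gtrsim \frac1{N+1}.
\]

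To bound the energy, I would write $\mu_N=\sum_{|\omega|=N}L^{-N}\lambda_\omega$, where $\lambda_\omega$ is normalized area measure on $D_\omega$. Then
\[
 \int f_\theta^2=\sum_{\omega,u}L^{-2N}\int (\pi_\theta)_\sharp\lambda_\omega\cdot(\pi_\theta)_\sharp\lambda_u .
\]
The projection of $\lambda_\omega$ has density bounded by $\lesssim L^{N}$ and is supported on an interval of length $2L^{-N}$. The standard identity
\[
 \int_0^\pi\int(\pi_\theta)_\sharp\lambda\,(\pi_\theta)_\sharp\lambda'\,dx\,d\theta\lesssim\iint\frac{d\lambda(x)\,d\lambda'(y)}{|x-y|}
\]
then bounds the angular average of each pair by $\lesssim \min(L^N, 1/\dist(D_\omega,D_u))$.

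Now I would group the pairs by the length $k$ of their longest common prefix. By Lemma~\ref{lem:first-difference}, for $k<N$ the separation is $\ge gL^{-k}$, and there are at most $L^{2N-k}$ such pairs. The contribution of level $k$ is therefore
\[
 \lesssim L^{-2N}\cdot L^{2N-k}\cdot g^{-1}L^{k}\lesssim 1.
\]
The diagonal terms contribute $L^{-2N}\cdot L^N\cdot L^N\cdot O(1)=O(1)$. Summing over $k=0,\dots,N$ gives $\lesssim N+1$, which is exactly the required energy bound. This is where the factor $N+1$ comes from: each scale contributes a bounded amount of energy.

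The main point to check carefully is the diagonal (and near-diagonal) term. There one needs the projected area measure of a disk to have an $L^2$ norm of order $L^{N/2}$ uniformly in $\theta$, and this holds because the projection of the uniform measure on a disk of radius $r$ has density $\lesssim 1/r$.

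For the neighbourhood statement \eqref{eq:intro-universal-lower}, I would use Lemma~\ref{lem:neighbourhood-comparison}: $\Nbd{\delta}{E}\supset E_{k_+(\delta)}$ with $k_+(\delta)=\log_L\delta^{-1}+O(1)$. Monotonicity of $\Fav$ then gives
\[
 \Fav(\Nbd{\delta}{E})\gtrsim (\log\delta^{-1})^{-1}.
\]
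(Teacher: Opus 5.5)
Your proposal is correct and follows essentially the paper's argument. Both bound the angularly averaged $L^2$ norm of the projected mass by grouping pairs of depth-$N$ disks according to the length of their longest common prefix, so that each level contributes $O(1)$ via Lemma~\ref{lem:first-difference}, and then apply Cauchy--Schwarz first in $x$ and then in $\theta$.

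The only difference is in the pair estimate. The paper uses the counting function $\sum_\omega\one_{\pi_\theta(D_\omega)}$ and directly bounds the measure of the angles where two projected intervals meet by $C\min\{1,\varepsilon/s\}$. You use normalized area measure and the projection--energy identity, which yields the same $1/\dist(D_\omega,D_u)$ pair bound.

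Your aside that ``any probability measure carried by $E_N$ would do'' holds only for the Cauchy--Schwarz step. The energy bound needs a smoothed measure such as $\mu_N$: the natural measure $\mu$, for instance, has infinite $1$-energy.
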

\begin{proof}
Set $\varepsilon=L^{-N}$ and define the projection counting function
\[
 f_{N,\theta}(x):=\sum_{\omega\in\cA^N}\one_{\pi_\theta(D_\omega)}(x).
\]
Every projected interval has length $2\varepsilon$, so
\begin{equation}\label{eq:lower-counting-mass}
 \int_\R f_{N,\theta}(x)\,dx=2.
\end{equation}
We claim that
\begin{equation}\label{eq:lower-averaged-energy}
 \frac1\pi\int_0^\pi\int_\R f_{N,\theta}(x)^2\,dx\,d\theta
 \lesssim N+1.
\end{equation}
The diagonal pairs of disks contribute exactly $2$. For two distinct disk centers $b_\omega,b_u$, let $s=|b_\omega-b_u|$. The two projected intervals can meet only when $|\pi_\theta(b_\omega-b_u)|\le2\varepsilon$. The measure of these angles is at most $C\min\{1,\varepsilon/s\}$, and their overlap length is at most $2\varepsilon$. Thus the angularly averaged overlap is at most $C\varepsilon^2/s$.

There are $(1-L^{-1})L^{2N-k}$ ordered pairs of depth-$N$ words whose longest common prefix has length $k$. Lemma~\ref{lem:first-difference} gives $s\ge gL^{-k}$. The contribution from all such pairs is therefore at most
\[
 C\varepsilon^2 L^kL^{2N-k}=C.
\]
Summing over $0\le k<N$ proves \eqref{eq:lower-averaged-energy}.

Cauchy--Schwarz in $x$ and \eqref{eq:lower-counting-mass} give
\[
 4\le\cH^1(\pi_\theta(E_N))\int_\R f_{N,\theta}^2.
\]
A further Cauchy--Schwarz inequality in $\theta$ yields, more explicitly,
\[
 2\le\frac1\pi\int_0^\pi
 \left(\cH^1(\pi_\theta(E_N))\int_\R f_{N,\theta}^2\right)^{1/2}d\theta
 \le\left(\Fav(E_N)\frac1\pi\int_0^\pi\int_\R f_{N,\theta}^2\right)^{1/2}.
\]
Consequently,
\[
 4\le\Fav(E_N)\left(\frac1\pi\int_0^\pi\int_\R f_{N,\theta}^2\right).
\]
This proves \eqref{eq:iterate-favard-lower}. The neighbourhood version follows from the first inclusion in \eqref{eq:neighbourhood-sandwich}.
\end{proof}

\section{Quantitative distribution of orientations}\label{sec:mixing}
We now impose the arithmetic condition on the rotations. Distances to the nearest integer are denoted by $\|t\|_{\T}$. For a vector $x\in\T^q$, let
\[
 \|x\|_{\T^q,\infty}:=\max_{1\le j\le q}\|x_j\|_{\T}.
\]

\begin{definition}\label{def:badly-approximable}
A vector $\tau=(\tau_1,\ldots,\tau_q)\in\T^q$ is \emph{simultaneously badly approximable} if there is a constant $b>0$ such that
\begin{equation}\label{eq:BA}
 \|k\tau\|_{\T^q,\infty}\ge b|k|^{-1/q}
 \qquad \forall k\in\Z\setminus\{0\}.
\end{equation}
\end{definition}

Let $r\ge2$, $q\ge1$, and $L=r+q$. The orientation measure associated with $r$ nonrotating symbols and $q$ rotating symbols is
\begin{equation}\label{eq:orientation-measure}
 \sigma:=\frac rL\delta_0+\frac1L\sum_{j=1}^q\delta_{\tau_j}.
\end{equation}
Here $\delta_t$ denotes unit point mass at $t\in\T$. The term $(r/L)\delta_0$ records the probability of selecting a nonrotating symbol, which leaves the accumulated orientation unchanged.

For probability measures $\mu,\nu$ on $\T$ and every Borel set $A\subset\T$, define
\[
 (\mu*\nu)(A):=\int_\T\int_\T\one_A(x+y)\,d\mu(x)\,d\nu(y),
\]
where addition is taken modulo one. For an integer $J\ge1$, we write
\[
 \sigma^{*J}:=\underbrace{\sigma*\cdots*\sigma}_{J\text{ factors}}
\]
for the $J$-fold convolution of $\sigma$ with itself. Thus $\sigma^{*J}$ is the distribution of the sum, modulo one, of $J$ independent increments with common distribution $\sigma$. Under the uniform counting measure on words, it is precisely the distribution of the orientation of a word of length $J$.

Let $\mu_{\mathbb{T}}$ denote normalized Haar measure on $\T$, so that $\mu_{\mathbb{T}}(\T)=1$. If $\nu$ is a probability measure on $\T$, define its interval discrepancy by
\[
 \Disc(\nu):=\sup_{I\subset\T}\bigl|\nu(I)-\mu_{\mathbb{T}}(I)\bigr|,
\]
where $I$ ranges over intervals on the circle. The Fourier coefficients of $\nu$ are
\[
 \widehat\nu(k):=\int_\T e^{-2\pi ikt}\,d\nu(t), \qquad \forall k \in \mathbb{Z}.
\]
With this convention, convolution satisfies
$\widehat{\mu*\nu}(k)=\widehat\mu(k)\widehat\nu(k)$; in particular,
$\widehat{\sigma^{*J}}(k)=\widehat\sigma(k)^J$.
We use the classical Erd\H{o}s--Tur\'an inequality \cite[Part~I, Theorem~III]{ErdosTuran1948}, which states that
\begin{equation}\label{eq:ET}
 \Disc(\nu)\le C\left(\frac1{H+1}
 +\sum_{k=1}^H\frac{|\widehat\nu(k)|}{k}\right),
 \qquad \forall H\in\N_{\ge1}.
\end{equation}
The probability-measure formulation needed here appears, for example, as Eq.~(9) in \cite{HS2004}. The measure $\sigma^{*J}$ is itself the equally weighted measure on a finite list of word orientations, with repetitions retained.

\begin{proposition}[Orientation discrepancy]\label{prop:orientation-discrepancy}
Suppose that \eqref{eq:BA} holds. There is a constant $C_{\mathrm{disc}}\ge1$, depending only on $q,r,L$ and $b$, such that
\begin{equation}\label{eq:discrepancy-rate}
 \Disc(\delta_a*\sigma^{*J})\le C_{\mathrm{disc}}J^{-q/2}
 \qquad \forall a\in\T,\ \forall J\in\N_{\ge1}.
\end{equation}
Consequently, for every $\eta\in(0,1/10)$, every interval $I\subset\T$ with $\mu_{\mathbb{T}}(I)=2\eta$ has mass at least $\eta$ under $\delta_a*\sigma^{*J}$ whenever
\begin{equation}\label{eq:mixing-length}
 J\ge C_{\mathrm{mix}}\eta^{-2/q},
 \qquad C_{\mathrm{mix}}:=C_{\mathrm{disc}}^{2/q}.
\end{equation}
Both estimates are uniform in the incoming orientation $a$.
\end{proposition}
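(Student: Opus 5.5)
Translation by $\delta_a$ only multiplies Fourier coefficients by a unimodular factor, $\widehat{\delta_a*\sigma^{*J}}(k)=e^{-2\pi ika}\widehat\sigma(k)^J$. It therefore suffices to bound $\sum_{k=1}^H|\widehat\sigma(k)|^J/k$ and then apply the Erd\H{o}s--Tur\'an inequality \eqref{eq:ET}. This also makes uniformity in $a$ automatic. We have
\[
 \widehat\sigma(k)=\frac rL+\frac1L\sum_{j=1}^q e^{-2\pi ik\tau_j}.
\]

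The first step is a pointwise bound $|\widehat\sigma(k)|\le 1-c\|k\tau\|_{\T^q,\infty}^2$. Choose $j$ with $\|k\tau_j\|_\T=\|k\tau\|_{\T^q,\infty}=:t$. The two terms $r/L$ and $e^{-2\pi ik\tau_j}/L$ point in directions separated by angle $2\pi t$. Since $r\ge2>0$, the triangle inequality loses at least
\[
 \frac{r+1}{L}-\Bigl|\frac rL+\frac{e^{-2\pi ik\tau_j}}L\Bigr|\gtrsim_{r,L} t^2 .
\]
This is where the holding mass at $0$ matters. The remaining terms have modulus at most $(q-1)/L$. Together this gives $|\widehat\sigma(k)|^J\le\exp(-cJ\|k\tau\|_{\T^q,\infty}^2)$.

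The main obstacle is summing these bounds over $k$. Plugging in \eqref{eq:BA} alone gives only $\exp(-cb^2Jk^{-2/q})$, which is small only for $k\ll J^{q/2}$ and does not control the harmonic sum efficiently. I would follow Hensley--Su and use a counting argument. Dyadically decompose $k\in[K,2K)$. By \eqref{eq:BA}, distinct $k,k'$ in this range satisfy $\|(k-k')\tau\|\ge b(2K)^{-1/q}$, so the points $k\tau\in\T^q$ are $b(2K)^{-1/q}$-separated in the sup norm. Hence the number of $k\in[K,2K)$ with $\|k\tau\|_{\T^q,\infty}\in[s,2s)$ is $\lesssim 1+(s K^{1/q})^q\asymp 1+s^qK$, valid for $s\gtrsim K^{-1/q}$. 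Summing over dyadic $s$ bounds $\sum_{K\le k<2K}|\widehat\sigma(k)|^J$ by
\[
 \sum_{s} (1+s^qK)e^{-cJs^2}\lesssim K J^{-q/2}+\#\{\text{dyadic }s\ge bK^{-1/q}\}\,e^{-cJb^2K^{-2/q}} .
\]
After dividing by $K$, the first term contributes $\lesssim J^{-q/2}$ per dyadic block, i.e.\ $J^{-q/2}\log H$ in total. This logarithm is too lossy, so I must be more careful. The correct way is to sum $\frac1k$ against the count: a block contributes $J^{-q/2}$ only when shells with $s\sim J^{-1/2}$ are populated, i.e.\ $K\gtrsim J^{q/2}$. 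For $K\lesssim J^{q/2}$, all $s\ge bK^{-1/q}\gtrsim J^{-1/2}$, so the terms are exponentially small in $Jb^2K^{-2/q}$, and these blocks sum geometrically to $O(J^{-q/2})$. Finally choose $H\asymp J^{q/2}$. Then $1/(H+1)\lesssim J^{-q/2}$, and only $O(1)$ blocks near $K\sim J^{q/2}$ contribute $O(J^{-q/2})$, with constants depending on $q,r,L,b$. If the boundary blocks still leak a logarithm, I would weaken the claimed rate slightly and carry the weaker rate through, but I expect the choice $H\sim J^{q/2}$ to give \eqref{eq:discrepancy-rate}.

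The consequence is immediate. The mass of $I$ is at least $2\eta-\Disc\ge2\eta-C_{\mathrm{disc}}J^{-q/2}$. This is at least $\eta$ once $C_{\mathrm{disc}}J^{-q/2}\le\eta$, i.e.\ once $J\ge C_{\mathrm{disc}}^{2/q}\eta^{-2/q}$, which is \eqref{eq:mixing-length}.
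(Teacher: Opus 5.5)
Your proposal is correct and follows essentially the paper's route. Both arguments use a holding-mass bound $|\widehat\sigma(k)|^J\le e^{-cJ\|k\tau\|_{\T^q,\infty}^2}$ (you obtain it from the triangle-inequality loss, the paper from the variance identity), the $b(2K)^{-1/q}$-separation of $\{k\tau\}_{K\le k<2K}$ from \eqref{eq:BA}, a shell/packing count, and the cutoff $H\asymp J^{q/2}$; the paper takes the constant in $H$ small enough that every dyadic block below $H$ lies in the exponentially small regime, which gives $\sum_{k\le H}|\widehat\sigma(k)|^J/k\lesssim H^{-1}$ directly.

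No logarithm leaks, so your closing hedge is unnecessary. You should, however, drop the factor $\#\{\text{dyadic } s\}$ from your display: once $Js_0^2\gtrsim1$ with $s_0\asymp bK^{-1/q}$, the shell sum $\sum_s(1+s^qK)e^{-cJs^2}$ is already $\lesssim e^{-cJs_0^2}$, whereas carrying that $\log K$ factor through the geometric summation would produce exactly the $J^{-q/2}\log J$ loss you were worried about.
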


The proof follows the discrepancy method of Hensley and Su \cite{HS2004}. We give the details because the nonrotating symbols produce a holding probability $r/L$, and the remaining rotation increments occur with only one sign.

\begin{lemma}[Gaussian packing]\label{lem:gaussian-packing}
Let $x_1,\ldots,x_m\in\T^q$ be pairwise $s$-separated in the maximum metric, and suppose that $\|x_j\|_{\T^q,\infty}\ge s$ for each $j$. If $s\in(0,1/2]$ and $ts^2\ge1$, then
\begin{equation}\label{eq:gaussian-packing}
 \sum_{j=1}^m e^{-t\|x_j\|_{\T^q,\infty}^2}
 \le C_q e^{-ts^2/2}.
\end{equation}
\end{lemma}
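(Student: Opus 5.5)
We would prove the Gaussian packing bound by a dyadic shell decomposition in the maximum metric. For each integer $n\ge1$, consider the shell
\[
 S_n:=\{x\in\T^q:\ ns\le\|x\|_{\T^q,\infty}<(n+1)s\}.
\]
Since every $x_j$ satisfies $\|x_j\|_{\T^q,\infty}\ge s$, every point lies in some shell with $n\ge1$. On the torus $\|x\|_{\T^q,\infty}\le1/2$, so only shells with $ns\le1/2$ are nonempty.

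The first step is to count points per shell. The points are pairwise $s$-separated in the maximum metric, so the open max-balls of radius $s/2$ around them are disjoint. Since $s\le1/2$, these balls embed isometrically from a fundamental domain. A shell of outer radius $(n+1)s$ has these balls contained in the max-ball of radius $(n+1)s+s/2\le(n+2)s$. Comparing volumes gives
\[
 \#\{j:x_j\in S_n\}\le\Bigl(\frac{2(n+2)s}{s}\Bigr)^q=(2(n+2))^q\lesssim_q n^q.
\]
If $(n+2)s$ exceeds $1/2$, we simply use the total volume $1$ of the torus; this only improves the count, since the count is then bounded by $s^{-q}\le(2(n+2))^q$.

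The second step is the summation. Each point in $S_n$ contributes at most $e^{-tn^2s^2}$, hence
\[
 \sum_j e^{-t\|x_j\|^2}\le C_q\sum_{n\ge1}n^qe^{-tn^2s^2}.
\]
Write $u:=ts^2\ge1$. For $n\ge1$ we have $n^2\ge\tfrac12+\tfrac12 n^2$, so $e^{-un^2}\le e^{-u/2}e^{-un^2/2}\le e^{-u/2}e^{-n^2/2}$, where the last inequality uses $u\ge1$. Therefore
\[
 \sum_{n\ge1}n^qe^{-un^2}\le e^{-u/2}\sum_{n\ge1}n^qe^{-n^2/2}=C'_qe^{-ts^2/2},
\]
which proves \eqref{eq:gaussian-packing}.

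The only delicate point is the torus geometry in the volume comparison: we must ensure that max-balls of radius less than $1/2$ do not wrap around the torus, and handle large shells by the trivial volume bound. Both are settled by the hypothesis $s\le1/2$ together with the fallback count $s^{-q}$. The condition $ts^2\ge1$ is used exactly to convert $e^{-un^2/2}$ into the summable, $u$-independent factor $e^{-n^2/2}$.
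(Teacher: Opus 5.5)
Your proof is correct and follows the paper's argument essentially verbatim. It uses the same shells $hs\le\|x_j\|_{\T^q,\infty}<(h+1)s$, the same count $\lesssim_q (h+2)^q$ from the disjoint $s/2$-balls (with the total-volume fallback for large shells), and the same extraction of the factor $e^{-ts^2/2}$ using $ts^2\ge1$. One minor wording point: your shells have constant width $s$, so the decomposition is arithmetic rather than ``dyadic''.
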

\begin{proof}
The $s/2$-balls about the points have disjoint interiors. Volume comparison in the maximum metric gives
\[
 \#\{j:hs\le\|x_j\|_{\T^q,\infty}<(h+1)s\}
 \le C_q(h+2)^q\qquad \forall h\in\N_{\ge1}.
\]
The cumulative ball count suffices here. If $(h+1)s\ge1/2$, the same inequality follows from the total volume bound for the disjoint $s/2$-balls. Since no point has distance less than $s$ from zero, the indicated shells cover all of the points. Hence
\[
 \sum_{j=1}^m e^{-t\|x_j\|_{\T^q,\infty}^2}
 \le C_q\sum_{h=1}^\infty(h+2)^q e^{-ts^2h^2}
 \le C_qe^{-ts^2/2}.
\]
For the last inequality, factor out $e^{-ts^2/2}$ and use $ts^2\ge1$ to bound the remaining series by a convergent series depending only on $q$.
\end{proof}

The separation hypothesis also gives $m\lesssim_q s^{-q}$ by volume comparison on $\T^q$. Thus the number of points and their separation cannot be chosen independently.

\begin{proof}[Proof of Proposition~\ref{prop:orientation-discrepancy}]
Fix an integer $J\ge1$. Write $p_0=r/L$, $p_j=1/L$ for $1\le j\le q$, and $\tau_0=0$. The exact identity
\[
 1-|\widehat\sigma(k)|^2
 =\sum_{i,j=0}^q p_ip_j
       \bigl(1-\cos(2\pi k(\tau_i-\tau_j))\bigr)
\]
contains only nonnegative summands.
\begin{align*}
 1-|\widehat\sigma(k)|^2
 &\ge \frac{2r}{L^2}\sum_{j=1}^q
       \bigl(1-\cos(2\pi k\tau_j)\bigr)\\
 &\ge \frac{16r}{L^2}\sum_{j=1}^q\|k\tau_j\|_\T^2,
\end{align*}
where, in the final inequality, we use that $1-\cos(2\pi t)\ge8\|t\|_\T^2$.
Using $u^{J/2}\le\exp(-(J/2)(1-u))$ for $u\in[0,1]$, we obtain, with $a_0:=8r/L^2$,
\begin{equation}\label{eq:fourier-walk-gaussian}
 |\widehat\sigma(k)|^J
 \le\exp\left(-a_0J\sum_{j=1}^q\|k\tau_j\|_\T^2\right)
 \le e^{-a_0J\|k\tau\|_{\T^q,\infty}^2}.
\end{equation}

Decrease the approximation constant $b$ if necessary so that $b\le1/2$. For each dyadic integer $Q\ge1$, the points $k\tau$, with $Q\le k<2Q$, together with the origin, are pairwise separated by
\[
 s_Q:=b(2Q)^{-1/q}.
\]
Indeed, the separation from zero follows from \eqref{eq:BA} with $k<2Q$, and separation between two different points follows by applying it to their difference, whose absolute value is less than $Q$.

For sufficiently large $J$, choose
\begin{equation}\label{eq:discrepancy-cutoff}
 H:=\left\lfloor\frac{b^q}{2}(a_0J)^{q/2}\right\rfloor\ge1.
\end{equation}
If $Q\le H$, then $2Q\le2H\le b^q(a_0J)^{q/2}$, and hence $a_0Js_Q^2\ge1$. Combining Lemma~\ref{lem:gaussian-packing} with \eqref{eq:fourier-walk-gaussian} gives
\begin{equation}\label{eq:cohort-estimate}
 \sum_{Q\le k<2Q}|\widehat\sigma(k)|^J
 \le C_q\exp\left(-\frac{a_0Jb^2}{2(2Q)^{2/q}}\right).
\end{equation}
Let $2^S\le H<2^{S+1}$. We bound the last, possibly incomplete, dyadic cohort by the complete cohort. Since
\[
 a_0Jb^2(2\cdot2^S)^{-2/q}\ge1,
\]
we obtain, via a dyadic frequency decomposition of the sum, that
\begin{align*}
 \sum_{k=1}^H\frac{|\widehat\sigma(k)|^J}{k}
 &\le C_q\sum_{j=0}^S2^{-j}
       \exp\left(-\tfrac12\,2^{2(S-j)/q}\right)\\
 &\le C_q2^{-S}\sum_{h=0}^\infty2^h
       e^{-2^{2h/q}/2}
 \lesssim_q H^{-1},
\end{align*}
where, in the final estimate, we use that the infinite series converges for all $q \geq 1$. By \eqref{eq:ET} and \eqref{eq:discrepancy-cutoff}, the discrepancy is at most $CJ^{-q/2}$. Enlarging $C$ handles the bounded values of $J$ for which the cutoff in \eqref{eq:discrepancy-cutoff} is less than one.

Translating the measure multiplies each Fourier coefficient by a complex number of modulus one. The same estimate therefore holds uniformly in $a$; fix a constant $C_{\mathrm{disc}}\ge1$ for which \eqref{eq:discrepancy-rate} holds for every $a$ and $J$. If $J\ge C_{\mathrm{mix}}\eta^{-2/q}$, with $C_{\mathrm{mix}}=C_{\mathrm{disc}}^{2/q}$, then
\[
 C_{\mathrm{disc}}J^{-q/2}
 \le C_{\mathrm{disc}}C_{\mathrm{mix}}^{-q/2}\eta=\eta.
\]
Thus every interval $I\subset\T$ with $\mu_{\mathbb{T}}(I)=2\eta$ has mass at least $2\eta-C_{\mathrm{disc}}J^{-q/2}\ge\eta$. This proves the interval conclusion for every $J$ above the stated threshold.
\end{proof}

\begin{remark}\label{rem:discrepancy-not-TV}
The estimate \eqref{eq:discrepancy-rate} concerns interval discrepancy. Since $\sigma^{*J}$ is finitely supported for every finite $J$, its total variation distance from Haar measure is maximal.
\end{remark}

\section{Collinear cylinders and the first successful block}\label{sec:stacks}
Throughout this section, $\theta\in[0,\pi)$ is fixed and arbitrary. The covering families may depend on $\theta$; all constants, block lengths, and terminal depths are uniform in $\theta$.

Fix a system satisfying the disk separation assumptions of Definition~\ref{def:disk-separation}. Assume that its alphabet is a disjoint union
\[
 \cA=\cH\sqcup\cB,\qquad
 \#\cH=r\ge2,\quad \#\cB=q\ge1,\quad L=r+q.
\]
We call $\cH$ the \emph{nonrotating part} and $\cB$ the \emph{rotating part} of the alphabet. For each $h\in\cH$, assume that
\begin{equation}\label{eq:collinear-maps}
 F_h(z)=L^{-1}z+c_h,
 \qquad c_h\in\R,\quad z\in\C.
\end{equation}
Enumerate $\cB=\{a_1,\ldots,a_q\}$ and put $\tau_j:=\tau_{a_j}$. We assume that $(\tau_1,\ldots,\tau_q)$ satisfies \eqref{eq:BA} with a fixed constant $b>0$. All constants in this section may depend on this fixed system and on $b$, but are independent of $\theta$ and of the integers specifying the construction depth or block decomposition.

\begin{theorem}[Projection bound for a fixed system]\label{thm:structural-projection}
Under these assumptions, define
\begin{equation}\label{eq:structural-exponent}
 A:=\frac{L^{2+2/q}}r>1,
 \qquad \beta:=\frac{\log r}{\log A}
 =\frac{\log_Lr}{2+2/q-\log_Lr}>0.
\end{equation}
There is a constant $C\ge1$ such that, for every integer $N\ge0$,
\begin{equation}\label{eq:structural-projection}
 \sup_{0\le\theta<\pi}\cH^1(\pi_\theta(E_N))
 \le C(N+1)^{-\beta}\bigl[\log(2+N)\bigr]^\beta.
\end{equation}
The same estimate holds with $E_N$ replaced by $\Nbd{L^{-N}}{E}$, after changing $C$. Consequently, for each $\beta_0\in(0,\beta)$, both quantities are bounded by $C_{\beta_0}(N+1)^{-\beta_0}$.
\end{theorem}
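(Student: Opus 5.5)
The proof will cover $\pi_\theta(E_N)$ using words stopped at the first successful symbolic block. The mechanism is as follows. If $P$ is a word with orientation $\tau_P$, then the $r^J$ cylinders $D_{Ph}$ with $h\in\cH^J$ all have centers on the segment $F_P(\R)\cap D_P$, by \eqref{eq:collinear-maps}. This segment has direction $\ee{\tau_P}$ and length $\lesssim\ell_P$. Suppose this direction lies within angle $\lesssim L^{-J}$ of the kernel direction $\theta+\pi/2$. Then the whole group projects into one interval of length $\lesssim\ell_PL^{-J}$. Its mass, however, is $r^J\ell_PL^{-J}$. So the group is compressed by the factor $r^{-J}$ relative to its mass, while an arbitrary cylinder $D_\omega$ only satisfies $\cH^1(\pi_\theta(D_\omega))\le2\ell_\omega$.

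\emph{Block structure.} Fix $J\ge1$, put $\eta:=c_0L^{-J}$, and let $m:=\lceil C_{\mathrm{mix}}\eta^{-2/q}\rceil\asymp L^{2J/q}$ from Proposition~\ref{prop:orientation-discrepancy}. A block is a word $uv$ with $|u|=m$ and $|v|=J$. Given the incoming word $\omega$ (a concatenation of earlier blocks), the block is \emph{successful} if two conditions hold:
\begin{enumerate}[(i)]
 \item $\tau_{\omega u}$ lies in the arc $I_\theta$ of length $2\eta$ centred at the kernel direction (taken modulo $1/2$, or working with both antipodal arcs);
 \item $v\in\cH^J$.
\end{enumerate}
Conditionally on $\omega$, the orientation $\tau_{\omega u}$ has law $\delta_{\tau_\omega}*\sigma^{*m}$. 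By Proposition~\ref{prop:orientation-discrepancy}, which is uniform in the incoming orientation $a=\tau_\omega$, condition (i) has probability at least $\eta$. Condition (ii) is independent of (i) and has probability $(r/L)^J$. Hence each block succeeds with conditional probability at least
\[
p\gtrsim L^{-J}(r/L)^J=r^JL^{-2J},
\]
uniformly in $\theta$ and in the past.

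\emph{Covering.} Take $K$ blocks and set $N_0:=K(m+J)$. For each $\omega\in\cA^{N_0}$, stop at the first successful block. The stopped prefixes $Pv$ with $v\in\cH^J$ form a prefix-free family. I group them by $P$, so that for each stopped $P$ all $r^J$ continuations $v\in\cH^J$ are stopped together. Each group $\bigcup_{v\in\cH^J}D_{Pv}$ is covered by a single interval of length
\[
\lesssim\ell_PL^{-J}=r^{-J}\sum_{v\in\cH^J}\ell_{Pv}.
\]
Summing over groups and applying Lemma~\ref{lem:prefix-free} to the stopped words $Pv$, the successful part contributes $\lesssim r^{-J}$. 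The words with no successful block carry total mass at most $(1-p)^K\le e^{-pK}$. Each of their cylinders is covered trivially by its own projection, so they contribute $\lesssim e^{-pK}$. Choosing $K\asymp Jp^{-1}\log r$ makes both contributions $\lesssim r^{-J}$.

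\emph{Arithmetic.} The depth used is
\[
N_0\asymp K L^{2J/q}\asymp J\,L^{2J}r^{-J}L^{2J/q}=J A^J.
\]
Given $N$, choose the largest $J$ with $N_0(J)\le N$. Then $A^J\asymp N/J$ with $J\asymp\log N$. Since $E_N\subset E_{N_0}$, this gives
\[
\cH^1(\pi_\theta(E_N))\lesssim r^{-J}=(A^{J})^{-\beta}\lesssim (N/\log N)^{-\beta},
\]
which is \eqref{eq:structural-projection}; small $N$ are absorbed into $C$. The neighbourhood version follows from the first inclusion of Lemma~\ref{lem:neighbourhood-comparison}, $\Nbd{L^{-N}}{E}\subset E_{N-O(1)}$. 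The final clause follows by absorbing the logarithm.

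\emph{Main obstacle.} I expect the hardest step to be the geometric compression claim: showing that an angular error $\eta\asymp L^{-J}$ really yields one interval of length $\lesssim\ell_PL^{-J}$, with a constant uniform in $\theta$. This requires checking that the centers $b_{Pv}$ lie exactly on the line $F_P(\R)$ and that the projection of a segment of length $\lesssim\ell_P$ at angle $\eta$ to the kernel is $\lesssim\ell_P\eta$. A related point is handling the identification $\theta\sim\theta+\pi$, which requires using two target arcs or the doubled orientation. The probabilistic step is routine once the conditional independence across blocks is written as a product over blocks of conditional failure probabilities, each at most $1-p$ by the uniformity of Proposition~\ref{prop:orientation-discrepancy} in $a$.
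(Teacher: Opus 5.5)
Your proposal is correct and follows the paper's proof essentially step for step. The shared steps are: blocks made of an orientation part of length $\asymp L^{2J/q}$ followed by a collinear part in $\cH^J$; conditional success probability $\gtrsim r^JL^{-2J}$, uniform in the past and in $\theta$; first-success stopping with the stopped words grouped by their orientation prefix; the prefix-free mass bound giving cost $\lesssim r^{-J}$; the residual bound $e^{-pK}$ with $K\asymp Jp^{-1}\log r$; and interpolation across the depths $\asymp JA^J$. Two small corrections: the inclusion $\Nbd{L^{-N}}{E}\subset E_{N-O(1)}$ you use is the second inclusion of Lemma~\ref{lem:neighbourhood-comparison}, not the first, and the identification $\theta\sim\theta+\pi$ is not an obstacle, since only a lower bound on the success probability is needed and the paper's single arc centred at $\theta/(2\pi)+1/4$ already gives it.
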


For each integer $n\ge1$ and each word $v\in\cH^n$, composing \eqref{eq:collinear-maps} gives
\begin{equation}\label{eq:collinear-word}
 F_v(z)=L^{-n}z+c_v,\qquad c_v\in\R.
\end{equation}
Since $D_v\subset D$, its center satisfies $|c_v|\le1$. Thus all $r^n$ centers at this depth lie on the real axis. After applying any prefix map $F_\omega$, these centers remain collinear, with line direction $2\pi\tau_\omega$.

\begin{lemma}[A group covered by one projected interval]\label{lem:stack-interval}
Let $n\ge1$ be an integer, let $\omega$ be any finite word, possibly empty, and let $\theta\in[0,\pi)$. Suppose that
\begin{equation}\label{eq:favourable-orientation}
 |\cos(2\pi\tau_\omega-\theta)|\le L^{-n}.
\end{equation}
Then
\begin{equation}\label{eq:stack-projection}
 \pi_\theta\left(\bigcup_{v\in\cH^n}D_{\omega v}\right)
 \subset
 \left[\pi_\theta(b_\omega)-2L^{-(|\omega|+n)},\,
       \pi_\theta(b_\omega)+2L^{-(|\omega|+n)}\right].
\end{equation}
The projected union therefore has length at most $4L^{-(|\omega|+n)}$.
\end{lemma}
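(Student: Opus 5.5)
The plan is to compute the geometry directly. By \eqref{eq:collinear-word} and \eqref{eq:orientation-word}, for each $v\in\cH^n$,
\[
 F_{\omega v}(z)=F_\omega\bigl(L^{-n}z+c_v\bigr)=\ell_\omega L^{-n}\ee{\tau_\omega}z+\ell_\omega\ee{\tau_\omega}c_v+b_\omega .
\]
Hence $D_{\omega v}$ is the closed disk of radius $L^{-(|\omega|+n)}$ centered at
\[
 b_{\omega v}=b_\omega+\ell_\omega c_v\ee{\tau_\omega}, \qquad c_v\in\R,\quad |c_v|\le1 .
\]
The bound $|c_v|\le 1$ holds because $D_v\subset D$ and the center of $D_v$ is $c_v$.

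Next, project the centers. Since
\[
 \pi_\theta(\ee{\tau_\omega})=\operatorname{Re}\bigl(e^{i(2\pi\tau_\omega-\theta)}\bigr)=\cos(2\pi\tau_\omega-\theta),
\]
linearity of $\pi_\theta$ over $\R$ gives
\[
 \bigl|\pi_\theta(b_{\omega v})-\pi_\theta(b_\omega)\bigr|=\ell_\omega|c_v|\,|\cos(2\pi\tau_\omega-\theta)|\le L^{-|\omega|}L^{-n}
\]
by \eqref{eq:favourable-orientation}. The projection of a disk of radius $\rho$ is the interval of radius $\rho$ about the projected center. Therefore $\pi_\theta(D_{\omega v})$ lies within $L^{-(|\omega|+n)}+L^{-(|\omega|+n)}=2L^{-(|\omega|+n)}$ of $\pi_\theta(b_\omega)$. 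Taking the union over $v$ gives \eqref{eq:stack-projection}, and the length bound $4L^{-(|\omega|+n)}$ follows.

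There is no real obstacle here. The only points needing care are the sign and angle conventions, namely that $\pi_\theta(e^{i\phi})=\cos(\phi-\theta)$, and the bound $|c_v|\le1$.
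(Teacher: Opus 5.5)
Your proposal is correct and follows essentially the same argument as the paper. You compose the maps to locate the center of $D_{\omega v}$ at $b_\omega+\ell_\omega c_v\ee{\tau_\omega}$, bound the projected offset by $\ell_\omega|c_v|\,|\cos(2\pi\tau_\omega-\theta)|\le L^{-(|\omega|+n)}$, and then add the disk radius.
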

\begin{proof}
By \eqref{eq:orientation-word} and \eqref{eq:collinear-word},
\[
 F_{\omega v}(z)
 =b_\omega+L^{-|\omega|}\ee{\tau_\omega}c_v
   +L^{-(|\omega|+n)}\ee{\tau_\omega}z.
\]
Since $c_v\in\R$ and $|c_v|\le1$, the projected center differs from $\pi_\theta(b_\omega)$ by at most
\[
 L^{-|\omega|}|c_v|\,|\cos(2\pi\tau_\omega-\theta)|
 \le L^{-(|\omega|+n)}.
\]
The disk $D_{\omega v}$ has radius $L^{-(|\omega|+n)}$, so its projection is contained in the interval in \eqref{eq:stack-projection}. This holds for every $v\in\cH^n$.
\end{proof}

\subsection{A uniform probability of success}
Set
\[
 C_0:=C_{\mathrm{mix}}=C_{\mathrm{disc}}^{2/q}\ge1,
\]
where the constants are those of Proposition~\ref{prop:orientation-discrepancy}. In particular, $C_0$ is independent of the projection direction, the block depth, and the incoming orientation.
For each integer $n\ge1$, define
\begin{equation}\label{eq:block-parameters}
 K_n:=r^n,\qquad \eta_n:=\frac{L^{-n}}{2\pi},\qquad
 J_n:=\left\lceil C_0\eta_n^{-2/q}\right\rceil,
 \qquad s_n:=J_n+n.
\end{equation}
Since $L\ge3$,
\[
 0<\eta_n\le\frac1{6\pi}<\frac1{10}.
\]
The definition of $J_n$ and \eqref{eq:discrepancy-rate} give
\begin{equation}\label{eq:block-discrepancy-bound}
 \Disc(\delta_a*\sigma^{*J_n})
 \le C_{\mathrm{disc}}J_n^{-q/2}
 \le C_{\mathrm{disc}}C_0^{-q/2}\eta_n
 =\eta_n
 \qquad\text{for every }a\in\T.
\end{equation}
For $\theta\in[0,\pi)$, define the closed interval on the circle
\begin{equation}\label{eq:target-orientation-interval}
 I_{n,\theta}:=
 \left\{t\in\T:
 \left\|t-\frac{\theta}{2\pi}-\frac14\right\|_\T\le\eta_n\right\}.
\end{equation}
It has Haar measure $2\eta_n$. If $t\in I_{n,\theta}$, choose a representative $e\in[-\eta_n,\eta_n]$ such that $t=\theta/(2\pi)+1/4+e$ modulo one. Then
\begin{equation}\label{eq:target-implies-favourable}
 |\cos(2\pi t-\theta)|=|\sin(2\pi e)|
 \le2\pi\eta_n=L^{-n}.
\end{equation}
Hence $\tau_\omega\in I_{n,\theta}$ is a sufficient condition for applying Lemma~\ref{lem:stack-interval} to the prefix $\omega$.

Write $\cA^*:=\bigcup_{m=0}^\infty\cA^m$ for the set of finite words, where $\cA^0=\{\varnothing\}$ and $\tau_\varnothing=0$. Concatenation is denoted by juxtaposition.
\begin{definition}[Blocks and success after a prefix]\label{def:successful-block}
Fix an integer $n\ge1$ and a direction $\theta\in[0,\pi)$, with the parameters in \eqref{eq:block-parameters}. A \emph{block} is a word $\xi\in\cA^{s_n}$. It has a unique decomposition
\[
 \xi=uv,\qquad u\in\cA^{J_n},\quad v\in\cA^n.
\]
We call $u$ its \emph{orientation part} and $v$ its \emph{collinear part}.

Given a preceding word $\rho\in\cA^*$, possibly empty, we say that $\xi=uv$ is \emph{successful after $\rho$}, at parameters $(n,\theta)$, if both conditions
\begin{equation}\label{eq:successful-block-test}
 \tau_{\rho u}=\tau_\rho+\tau_u\in I_{n,\theta},
 \qquad v\in\cH^n
\end{equation}
hold, with addition of orientations in $\T$.
\end{definition}
If $uv$ is successful after $\rho$, Lemma~\ref{lem:stack-interval} applies with $\omega=\rho u$ and covers the projected disks for all $v'\in\cH^n$ by one interval of length $4L^{-(|\rho|+s_n)}$.

Fix $n$ and $\theta$ through the end of the next subsection; their dependence is suppressed in the event notation. On the product probability space $\Omega=\cA^\N$ from Section~\ref{sec:geometry}, let $X_m$ be the $m$th coordinate symbol and define
\[
 \Theta_0:=0,\qquad
 \Theta_m:=\sum_{i=1}^m\tau_{X_i}\pmod1\qquad\forall m\in\N_{\ge1}.
\]
For the $j$th consecutive block, define
\begin{equation}\label{eq:block-success-event}
 \begin{split}
 A_j&:=\{\Theta_{(j-1)s_n+J_n}\in I_{n,\theta}\}\\
 &\qquad\cap\{X_{(j-1)s_n+J_n+1}\cdots X_{js_n}\in\cH^n\},
 \qquad j\ge1.
 \end{split}
\end{equation}
For each integer $j\ge0$, define
\[
 \mathcal G_j:=
 \left\{\bigcup_{\rho\in S}[\rho]:S\subseteq\cA^{js_n}\right\}.
\]
This is the $\sigma$-algebra of events determined by the first $j$ completed blocks; its atoms are the cylinders $[\rho]$ with $\rho\in\cA^{js_n}$. In particular, $\mathcal G_0=\{\varnothing,\Omega\}$ and $A_j\in\mathcal G_j$ for $j\ge1$. Conditional probabilities with respect to these finite $\sigma$-algebras are taken in the version constant on each of their atoms.

Define the \emph{first-success index} by
\begin{equation}\label{eq:first-success-index}
 T(\boldsymbol\alpha):=\inf\{j\ge1:\boldsymbol\alpha\in A_j\}, \qquad \forall \boldsymbol{\alpha} \in \Omega,
\end{equation}
and we use the convention that $\inf \emptyset = \infty$.

For each integer $j\ge0$, define the \emph{survival event}
\begin{equation}\label{eq:survival-event}
 \mathcal S_j:=\{T>j\}
 =\{\boldsymbol\alpha\in\Omega:T(\boldsymbol\alpha)>j\}
 =\bigcap_{i=1}^j A_i^c.
\end{equation}
Thus $\mathcal S_j$ consists of the addresses for which none of the first $j$ blocks is successful. Complements are taken relative to $\Omega$, and the empty-intersection convention gives $\mathcal S_0=\Omega$. Since $A_i\in\mathcal G_i\subseteq\mathcal G_j$ for $i\le j$, one has $\mathcal S_j\in\mathcal G_j$. In particular, $T$ is a stopping time for $(\mathcal G_j)$; its value $T=j$ corresponds to construction depth $js_n$.

\begin{lemma}[Conditional success and survival]\label{lem:success-probability}
Put
\begin{equation}\label{eq:success-probability}
 p_n:=\eta_n\left(\frac rL\right)^n
 =\frac1{2\pi}\frac{r^n}{L^{2n}}\in(0,1).
\end{equation}
For every integer $j\ge1$,
\begin{equation}\label{eq:conditional-block-success}
 \Prob(A_j\mid\mathcal G_{j-1})
 =\left(\frac rL\right)^n
   (\delta_{\Theta_{(j-1)s_n}}*\sigma^{*J_n})(I_{n,\theta})
 \ge p_n.
\end{equation}
Consequently, for every integer $B\ge0$, one has
\begin{equation}\label{eq:failure-probability}
 \Prob(\mathcal S_B)\le(1-p_n)^B\le e^{-p_nB}.
\end{equation}
All these bounds are uniform in $\theta$ and in the preceding word.
\end{lemma}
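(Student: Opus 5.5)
The plan is to compute the conditional probability atom by atom, since $\mathcal G_{j-1}$ is generated by the finitely many cylinders $[\rho]$, $\rho\in\cA^{(j-1)s_n}$. I will fix such a $\rho$ and compute $\Prob(A_j\cap[\rho])/\Prob([\rho])$. Under the product measure, the coordinates $X_{(j-1)s_n+1},\ldots,X_{js_n}$ are independent of $[\rho]$ and uniformly distributed on $\cA$. On $[\rho]$ one has $\Theta_{(j-1)s_n}=\tau_\rho$. The event $A_j$ then requires the next block $uv$ to satisfy $\tau_\rho+\tau_u\in I_{n,\theta}$ and $v\in\cH^n$. These two conditions involve disjoint sets of coordinates, namely the $J_n$ orientation coordinates and the $n$ collinear coordinates. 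Hence the conditional probability factors as $\Prob(v\in\cH^n)$ times $\Prob(\tau_\rho+\tau_u\in I_{n,\theta})$.

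The first factor is $(r/L)^n$, because each of the $n$ collinear symbols independently lies in $\cH$ with probability $r/L$. For the second factor I use the orientation measure \eqref{eq:orientation-measure}. Each symbol contributes $0$ with probability $r/L$ (the nonrotating maps have $\tau_h=0$ by \eqref{eq:collinear-maps}) and $\tau_j$ with probability $1/L$. So the law of $\tau_u$ for uniform $u\in\cA^{J_n}$ is $\sigma^{*J_n}$, and the law of $\tau_\rho+\tau_u$ is $\delta_{\tau_\rho}*\sigma^{*J_n}$. This yields the identity in \eqref{eq:conditional-block-success}, with the version constant on atoms, as required.

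For the lower bound I invoke \eqref{eq:block-discrepancy-bound}: the interval $I_{n,\theta}$ has Haar measure $2\eta_n$, so its mass under $\delta_a*\sigma^{*J_n}$ is at least $2\eta_n-\eta_n=\eta_n$, uniformly in $a$ and $\theta$. Multiplying the two factors gives at least $\eta_n(r/L)^n=p_n$. The inequality $p_n\in(0,1)$ follows from $\eta_n<1/10$.

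Survival then follows by induction on $B$, with the case $B=0$ trivial. Since $\mathcal S_{B-1}\in\mathcal G_{B-1}$,
\[
\Prob(\mathcal S_B)=\Prob(\mathcal S_{B-1}\cap A_B^c)=\E\bigl[\one_{\mathcal S_{B-1}}(1-\Prob(A_B\mid\mathcal G_{B-1}))\bigr]\le(1-p_n)\Prob(\mathcal S_{B-1}).
\]
Iterating and using $1-x\le e^{-x}$ gives \eqref{eq:failure-probability}.

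There is no real obstacle. The only point needing care is the bookkeeping behind the independence and factorization: checking that the orientation of the first $(j-1)s_n$ symbols is determined by the atom, and that the discrepancy bound is uniform in the translate $a=\tau_\rho$. Both are supplied by Proposition~\ref{prop:orientation-discrepancy} and the choice of $J_n$.
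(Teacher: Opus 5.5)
Your proposal is correct and follows essentially the same route as the paper. Both compute the conditional probability atom by atom on the cylinders $[\rho]$, $\rho\in\cA^{(j-1)s_n}$, factor it by independence into $(r/L)^n$ times $(\delta_{\tau_\rho}*\sigma^{*J_n})(I_{n,\theta})\ge\eta_n$ via \eqref{eq:block-discrepancy-bound}, and then iterate the tower-property recursion $\Prob(\mathcal S_B)\le(1-p_n)\Prob(\mathcal S_{B-1})$ using $\mathcal S_{B-1}\in\mathcal G_{B-1}$.
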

\begin{proof}
Fix $j\ge1$ and an atom $[\rho] \in \mathcal G_{j-1}$ with $\rho\in\cA^{(j-1)s_n}$. Conditional on $[\rho]$, the orientation after the next $J_n$ coordinates has distribution $\delta_{\tau_\rho}*\sigma^{*J_n}$. The last $n$ coordinates are independent of both the prefix and these $J_n$ coordinates, and belong to $\cH^n$ with probability $(r/L)^n$. This proves the equality in \eqref{eq:conditional-block-success} on every atom. Since $\mu_{\mathbb T}(I_{n,\theta})=2\eta_n$, \eqref{eq:block-discrepancy-bound} implies
\[
 (\delta_{\tau_\rho}*\sigma^{*J_n})(I_{n,\theta})
 \ge2\eta_n-\eta_n=\eta_n.
\]
Multiplication by $(r/L)^n$ gives \eqref{eq:conditional-block-success}.

For every integer $j\ge1$, \eqref{eq:survival-event} gives
\[
 \mathcal S_{j-1}=\bigcap_{i=1}^{j-1}A_i^c,
 \qquad
 \mathcal S_j=\mathcal S_{j-1}\cap A_j^c.
\]
For $1\le i\le j-1$, one has $A_i\in\mathcal G_i\subseteq\mathcal G_{j-1}$. Closure under complements and finite intersections therefore gives $\mathcal S_{j-1}\in\mathcal G_{j-1}$. For $j=1$, the empty intersection is $\Omega$, which belongs to $\mathcal G_0$.

The tower property of conditional expectation and the $\mathcal G_{j-1}$-measurability of $\one_{\mathcal S_{j-1}}$ imply
\begin{align*}
 \Prob(\mathcal S_j)
 &=\E\left[\one_{\mathcal S_{j-1}}\one_{A_j^c}\right]\\
 &=\E\left[\E\left[
       \one_{\mathcal S_{j-1}}\one_{A_j^c}
       \mid\mathcal G_{j-1}\right]\right]\\
 &=\E\left[\one_{\mathcal S_{j-1}}
       \E\left[\one_{A_j^c}\mid\mathcal G_{j-1}\right]\right]\\
 &=\E\left[\one_{\mathcal S_{j-1}}
       \Prob(A_j^c\mid\mathcal G_{j-1})\right].
\end{align*}
By \eqref{eq:conditional-block-success},
\[
 \Prob(A_j^c\mid\mathcal G_{j-1})
 =1-\Prob(A_j\mid\mathcal G_{j-1})\le1-p_n.
\]
Multiplying by the nonnegative indicator $\one_{\mathcal S_{j-1}}$ and taking expectations yields
\[
 \Prob(\mathcal S_j)
 \le(1-p_n)\E[\one_{\mathcal S_{j-1}}]
 =(1-p_n)\Prob(\mathcal S_{j-1}).
\]
Since $\mathcal S_0=\Omega$, one has $\Prob(\mathcal S_0)=1$. Iterating this inequality for $j=1,\ldots,B$ gives
\[
 \Prob(\mathcal S_B)\le(1-p_n)^B\Prob(\mathcal S_0)=(1-p_n)^B
 \qquad\forall B\in\N_{\ge1}.
\]
The same bound holds with equality for $B=0$. Finally, $1-p_n\le e^{-p_n}$ implies $(1-p_n)^B\le e^{-p_nB}$ for every integer $B\ge0$, proving \eqref{eq:failure-probability}.
\end{proof}

\subsection{The first-success cover}
\begin{proposition}[A cover by stopped groups and residual disks]\label{prop:first-success-cover}
For every integer $n\ge1$, direction $\theta\in[0,\pi)$, and integer $B\ge1$, let $M:=Bs_n$, with the parameters in \eqref{eq:block-parameters}. Then
\begin{equation}\label{eq:first-success-cover}
 \cH^1(\pi_\theta(E_M))\le\frac4{K_n}+2e^{-p_nB}.
\end{equation}
The parameters $K_n,s_n,p_n$, and therefore the depth $M$ for a prescribed $B$, are independent of $\theta$.
\end{proposition}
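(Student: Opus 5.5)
Write $E_M=\bigcup_{\omega\in\cA^M}D_\omega$ and split the depth-$M$ words according to the first-success index $T$ of Definition~\ref{def:successful-block}. Since $A_j\in\mathcal G_j$, the event $\{T=j\}$ is a union of cylinders $[\rho]$ with $\rho\in\cA^{js_n}$. For $j\le B$ we collect these stopping words in the family $\cW_j:=\{\rho\in\cA^{js_n}:[\rho]\subset\{T=j\}\}$ and set $\cW:=\bigcup_{j=1}^B\cW_j$. Every $\omega\in\cA^M$ then falls into exactly one of two classes. Either $\omega$ has a (unique) prefix $\rho\in\cW$, or $[\omega]\subset\mathcal S_B$. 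The family $\cW$ is prefix-free, because $T$ is a stopping time: if $\rho\in\cW_j$ were a proper prefix of $\rho'\in\cW_{j'}$ with $j<j'$, then $[\rho']\subset\{T=j\}\cap\{T=j'\}=\varnothing$.

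We treat the residual words first. The words $\omega\in\cA^M$ with $[\omega]\subset\mathcal S_B$ have total mass $\sum L^{-M}=\Prob(\mathcal S_B)\le e^{-p_nB}$ by Lemma~\ref{lem:success-probability}. Each projected disk $\pi_\theta(D_\omega)$ has length $2L^{-M}$, so the residual part contributes at most $2e^{-p_nB}$ to $\cH^1(\pi_\theta(E_M))$.

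Next come the stopped groups. Fix $\rho\in\cW_j$ and write $\rho=\rho'uv$ with $|\rho'|=(j-1)s_n$, $u\in\cA^{J_n}$, $v\in\cA^n$. Membership in $A_j$ says exactly that $uv$ is successful after $\rho'$. Hence $\tau_{\rho'u}\in I_{n,\theta}$, which gives \eqref{eq:favourable-orientation} by \eqref{eq:target-implies-favourable}, and also $v\in\cH^n$. By Lemma~\ref{lem:stack-interval} with $\omega=\rho'u$, the projection of $\bigcup_{v'\in\cH^n}D_{\rho'uv'}$ lies in a single interval of length $4L^{-js_n}$.

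The key bookkeeping point is this. All $K_n=r^n$ words $\rho'uv'$ with $v'\in\cH^n$ are themselves in $\cW_j$, since success depends only on $\rho'$, $u$ and the fact $v'\in\cH^n$. So $\cW_j$ partitions into groups of exactly $K_n$ words, and each group costs $4L^{-js_n}$ of projected length. The depth-$M$ disks descending from $\rho$ lie in $D_\rho$, so they are covered by the same interval. Summing over groups, the stopped part contributes at most
\[
\sum_{\text{groups}}4L^{-js_n}=\frac4{K_n}\sum_{\rho\in\cW}L^{-|\rho|}\le\frac4{K_n},
\]
where the last inequality is Lemma~\ref{lem:prefix-free}. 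Adding the two contributions gives \eqref{eq:first-success-cover}. Uniformity in $\theta$ is immediate because $K_n$, $s_n$ and $p_n$ do not involve $\theta$.

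The main obstacle is the grouping step: we must check that a stopped word's whole collinear sibling family is stopped at the same index. This holds because $A_j$ depends on the last $n$ symbols only through membership in $\cH^n$, and because $T=j$ requires failure of the earlier blocks, which depend only on $\rho'$. This is precisely what lets one interval serve $K_n$ cylinders, and it produces the $1/K_n$ gain.
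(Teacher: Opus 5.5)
Your proposal is correct and follows the paper's proof essentially step for step: the same first-success partition of $\cA^M$ into residual words and extensions of prefix-free stopped words, one interval of length $4L^{-js_n}$ per collinear group of $K_n$ siblings via Lemma~\ref{lem:stack-interval}, and the bound $2\Prob(\mathcal S_B)\le 2e^{-p_nB}$ for the residual disks. The differences are cosmetic. You define the stopped words as the atoms of $\{T=j\}$ and then verify the sibling structure, whereas the paper builds them explicitly from $\mathcal R_{j-1}$ and the orientation prefixes $\mathcal O_j$. You also close with Lemma~\ref{lem:prefix-free} rather than the exact mass identity $\sum_j\sum_{\omega\in\mathcal O_j}K_nL^{-js_n}=\Prob(\mathcal S_B^c)$.
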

\begin{proof}
Fix $n,\theta,B$ as in the statement. For $0\le j\le B$, define the residual family
\begin{equation}\label{eq:residual-word-family}
 \mathcal R_j:=
 \{\rho\in\cA^{js_n}:[\rho]\subset\mathcal S_j\}.
\end{equation}
Since $\mathcal S_j\in\mathcal G_j$, the residual family gives the exact representation
\begin{equation}\label{eq:residual-word-mass}
 \begin{aligned}
 \mathcal S_j&=\bigsqcup_{\rho\in\mathcal R_j}[\rho],\\
 \mathcal R_0&=\{\varnothing\},\qquad
 \Prob(\mathcal S_j)=\#\mathcal R_j\,L^{-js_n}.
 \end{aligned}
\end{equation}

For $1\le j\le B$, define the family of orientation prefixes
\begin{equation}\label{eq:orientation-prefix-family}
 \mathcal O_j:=
 \{\rho u:\rho\in\mathcal R_{j-1},\ u\in\cA^{J_n},\
             \tau_{\rho u}\in I_{n,\theta}\}.
\end{equation}
Each $\omega\in\mathcal O_j$ has length $(j-1)s_n+J_n$. For every $v\in\cH^n$, the word $\omega v$ has first success at index $j$, since the choice of $v$ affects neither the preceding blocks nor the orientation test. Define
\begin{equation}\label{eq:stopped-word-family}
 \mathcal W_j:=\{\omega v:\omega\in\mathcal O_j,\ v\in\cH^n\}.
\end{equation}
The prescribed component lengths make $(\omega,v)\mapsto\omega v$ injective. Thus each $\omega\in\mathcal O_j$ indexes exactly $K_n=r^n$ stopped words.

Every terminal word either belongs to $\mathcal R_B$ or has a unique first-success index $j\le B$ and a prefix in $\mathcal W_j$. Every terminal extension of a word in $\mathcal W_j$ has the same first-success index. Hence
\begin{equation}\label{eq:first-success-word-partition}
 \cA^M
 =\mathcal R_B\ \sqcup\ \bigg(
   \bigsqcup_{j=1}^B\ \bigsqcup_{\xi\in\mathcal W_j}
   \{\xi t:t\in\cA^{M-js_n}\} \bigg).
\end{equation}
Here $\cA^0=\{\varnothing\}$ covers the case $j=B$.

In particular, $\bigcup_{j=1}^B\mathcal W_j$ is prefix-free. Indeed, if a word stopped at index $j$ were a prefix of one stopped at a later index $k$, the latter would already have succeeded at index $j$. This contradicts its first success being at $k$. Distinct stopped words at the same index have equal length. The families $\mathcal O_j$ of orientation prefixes are not required to have this property.

A stopped word in $\mathcal W_j$ has exactly $L^{M-js_n}$ terminal extensions. Counting the disjoint pieces in \eqref{eq:first-success-word-partition} and dividing by $L^M$ gives
\begin{equation}\label{eq:successful-group-mass}
 \sum_{j=1}^B\sum_{\omega\in\mathcal O_j}K_nL^{-js_n}
 =1-\#\mathcal R_B L^{-M}
 =\Prob(\mathcal S_B^c)\le1.
\end{equation}
In particular, this recovers Lemma~\ref{lem:prefix-free} for the stopped family.

For $\omega\in\mathcal O_j$, we have $\tau_\omega\in I_{n,\theta}$ and $|\omega|+n=js_n$. Lemma~\ref{lem:stack-interval} and \eqref{eq:target-implies-favourable} therefore cover
\[
 \pi_\theta\left(\bigcup_{v\in\cH^n}D_{\omega v}\right)
\]
by one interval of length at most $4L^{-js_n}$. Every terminal descendant of $\omega v$ is contained in $D_{\omega v}$, so the same interval covers all terminal descendants of that group. Each residual disk $D_\rho$, with $\rho\in\mathcal R_B$, has projected length $2L^{-M}$. These intervals cover $\pi_\theta(E_M)$ by \eqref{eq:first-success-word-partition}. Subadditivity of length, \eqref{eq:successful-group-mass}, and \eqref{eq:residual-word-mass} yield
\begin{align}
 \cH^1(\pi_\theta(E_M))
 &\le4\sum_{j=1}^B\sum_{\omega\in\mathcal O_j}L^{-js_n}
       +2\#\mathcal R_B L^{-M}\notag\\
 &=\frac4{K_n}\Prob(\mathcal S_B^c)+2\Prob(\mathcal S_B)\notag\\
 &\le\frac4{K_n}+2e^{-p_nB}.
 \label{eq:first-success-cover-cost}
\end{align}
The last inequality is \eqref{eq:failure-probability}.
\end{proof}

\begin{remark}
Prefix freeness is required only for the stopped words. The covering intervals may overlap; their lengths are estimated by subadditivity.
\end{remark}

\subsection{Depth selection and interpolation}
\begin{proof}[Proof of Theorem~\ref{thm:structural-projection}]
For each integer $n\ge1$, choose
\begin{equation}\label{eq:number-of-blocks}
 B_n:=\left\lceil\frac{n\log r}{p_n}\right\rceil,
 \qquad M_n:=B_ns_n=B_n(J_n+n).
\end{equation}
Since $p_nB_n\ge n\log r$, we have $e^{-p_nB_n}\le r^{-n}$. Apply Proposition~\ref{prop:first-success-cover} with $B=B_n$ and $K_n=r^n$. Its depth $M_n$ is independent of $\theta$, and it gives
\begin{equation}\label{eq:projection-at-special-depths}
 \sup_{0\le\theta<\pi}\cH^1(\pi_\theta(E_{M_n}))\le6r^{-n}.
\end{equation}

Substitution of \eqref{eq:block-parameters} and \eqref{eq:success-probability} gives
\begin{align*}
 J_n&=\left\lceil C_0(2\pi)^{2/q}(L^{2/q})^n\right\rceil,\\
 B_n&=\left\lceil2\pi(\log r)n(L^2/r)^n\right\rceil.
\end{align*}
Both bases $L^{2/q}$ and $L^2/r$ exceed one. The ceilings change these positive expressions by at most fixed multiplicative factors, and hence
\[
 J_n\asymp L^{2n/q},\qquad
 B_n\asymp n(L^2/r)^n.
\]
For $\lambda:=L^{2/q}>1$, the geometric sum gives
\[
 n\le\frac{\lambda^n-1}{\lambda-1}
 \le\frac{\lambda^n}{\lambda-1}.
\]
Consequently $s_n=J_n+n\asymp L^{2n/q}$. There are constants $0<c_1\le c_2$, depending only on the fixed system, such that
\begin{equation}\label{eq:terminal-depth-cost}
 c_1nA^n\le M_n\le c_2nA^n
 \qquad\forall n\in\N_{\ge1},\qquad A=\frac{L^{2+2/q}}r.
\end{equation}

The explicit formulas show that $J_n$ and $B_n$ are nondecreasing, while $s_n=J_n+n$ is strictly increasing. Since $B_n\ge1$, the sequence $M_n=B_ns_n$ is strictly increasing. It tends to infinity by \eqref{eq:terminal-depth-cost}, and its successive ratios satisfy
\[
 \frac{M_{n+1}}{M_n}
 \le\frac{c_2}{c_1}\frac{n+1}{n}A
 \le\frac{2c_2A}{c_1}.
\]

For each integer $N\ge M_1$, let $n\ge1$ be the unique index satisfying $M_n\le N<M_{n+1}$. Nestedness gives
\begin{equation}\label{eq:interpolation-projection}
 \sup_{0\le\theta<\pi}\cH^1(\pi_\theta(E_N))
 \le\sup_{0\le\theta<\pi}\cH^1(\pi_\theta(E_{M_n}))
 \le6r^{-n}.
\end{equation}
The two sides of \eqref{eq:terminal-depth-cost}, applied at $n$ and $n+1$, respectively, give
\[
 c_1A^n\le M_n\le N,
 \qquad
 N<M_{n+1}\le c_2(n+1)A^{n+1}\le2c_2AnA^n.
\]
The first inequality implies $n\lesssim\log(2+N)$; the second implies $A^n>N/(2c_2An)$. Since $r=A^\beta$, it follows that
\[
 r^{-n}=(A^n)^{-\beta}
 \le(2c_2A)^\beta N^{-\beta}n^\beta
 \lesssim N^{-\beta}[\log(2+N)]^\beta.
\]
Together with \eqref{eq:interpolation-projection}, this proves \eqref{eq:structural-projection} for $N\ge M_1$. For the finitely many integers $0\le N<M_1$, use $E_N\subset D$ and $\cH^1(\pi_\theta(E_N))\le2$, and enlarge the constant. Replacing $N$ by $N+1$ in the power factor changes only the constant for $N\ge1$.

For the neighbourhood assertion, retain $R<1$ from \eqref{eq:strict-interior} and fix the integer
\[
 c_*:=\left\lceil\log_L\bigl((1-R)^{-1}\bigr)\right\rceil.
\]
Then $(1-R)L^{c_*}\ge1$. For every $N\ge c_*$, Lemma~\ref{lem:neighbourhood-comparison} gives
\[
 \Nbd{L^{-N}}{E}\subset E_{N-c_*},
\]
since $L^{-N}\le(1-R)L^{-(N-c_*)}$. Applying the iterate estimate at depth $N-c_*$ gives the claimed bound for all sufficiently large $N$; the fixed shift changes only the constant. The remaining finitely many depths are handled using $\Nbd{L^{-N}}{E}\subset\Nbd{1}{D}$, whose projection has length at most $4$.

Finally, for each $\beta_0\in(0,\beta)$, the function
\[
 (N+1)^{-(\beta-\beta_0)}[\log(2+N)]^\beta
\]
is bounded for $N\ge0$. This absorbs the logarithmic factor and proves the final power-law assertion.
\end{proof}

\section{Algebraic rotations and explicit systems}\label{sec:explicit}
For every integer $q\ge1$, the algebraic vector constructed below satisfies \eqref{eq:BA} with an explicit constant. This supplies the rotation condition for both the sixteen-map example and the family realizing every exponent in $(0,1)$.

\subsection{Algebraic rotation parameters}
\begin{lemma}\label{lem:algebraic-BA}
Let $q\ge1$ be an integer and $\xi_q:=2^{1/(q+1)}$. Then
\begin{equation}\label{eq:algebraic-BA}
 \max_{1\le j\le q}\|k\xi_q^j\|_\T
 \ge b_q k^{-1/q}\qquad \forall k\in\N_{\ge1},
\end{equation}
where one may take
\begin{equation}\label{eq:explicit-BA-constant}
 b_q:=\left(\frac52\,4^q(q+1)^{(q+1)/2}\right)^{-1/q}.
\end{equation}
For $q=3$, the smaller constant $b_3=1/16$ is also valid.
\end{lemma}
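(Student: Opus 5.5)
The plan is the classical Liouville/Cassels argument for badly approximable vectors built from powers of an algebraic number of degree $q+1$. Set $d:=q+1$ and $\xi:=\xi_q$. Since $x^d-2$ is irreducible over $\Q$ by Eisenstein, $1,\xi,\ldots,\xi^q$ are linearly independent over $\Q$ and $K=\Q(\xi)$ has degree $d$. Fix $k\ge1$ and choose integers $p_j$ nearest to $k\xi^j$. Write $\varepsilon_j:=k\xi^j-p_j$, so that $|\varepsilon_j|=\|k\xi^j\|_\T$. Put $p_0:=k$ and $\varepsilon_0:=0$. Consider the algebraic integer
\[
 \beta:=\sum_{j=0}^q p_j\xi^{-j}\cdot(\text{suitable normalization}).
\]
It is cleaner to use the linear form $\Lambda(x)=\sum_{j=0}^{q}x_j\xi^{j}$ with a dual choice. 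Concretely, I would take the element $\gamma:=k\xi - p_1$ lifted to the full vector. Rather than commit to that, the most robust route is the norm argument: form
\[
 \alpha:=\sum_{j=0}^q c_j \xi^j\in\Z[\xi]\setminus\{0\},
\]
built from the $p_j$, whose archimedean embedding is small, about $\max_j|\varepsilon_j|$. Its other $q$ conjugates (under $\xi\mapsto\zeta^i\xi$, $\zeta=e^{2\pi i/d}$) are of size $\lesssim k$. The nonzero integer $N_{K/\Q}(\alpha)$ then gives $1\le|\alpha|\prod_{i\ne0}|\alpha^{(i)}|\lesssim \max_j|\varepsilon_j|\cdot k^q$, which is the claim up to the exponent bookkeeping.

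The precise choice of $\alpha$ uses the identity $\xi\cdot\xi^q=2$. The conjugation-friendly object is the vector $(k,p_1,\dots,p_q)\approx k(1,\xi,\dots,\xi^q)$. Then $(p_1,\ldots,p_q,2k)\approx \xi(k,p_1,\dots,p_q)$, which says that this vector is an approximate eigenvector of the companion matrix. I would therefore take $\alpha:=\sum_{j=0}^q p_j\xi^{q-j}$ (with $p_0=k$) paired against the dual basis. Equivalently, evaluate $P(x):=\sum_j p_jx^{q-j}$. The required smallness holds at some embedding, because $\sum_j p_j x^{q-j}$ has the form $k\cdot\prod(\ldots)$ plus an error. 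The standard trick is cleaner: the linear form $\ell(y)=\sum_{j}y_j\,\sigma_i(\xi)^{?}$ ... In practice I will follow the usual proof (as in Cassels or Schmidt, \emph{Diophantine Approximation}, Ch.~II, Thm.~4A) for a basis of a degree-$d$ number field. Let $L_i(y):=\sum_{j=0}^q y_j\sigma_i(\xi^j)$ for the $d$ embeddings $\sigma_i$. For $y=(k,-p_1,\dots)$ adjusted by the dual basis, $L_0$ is small, $\prod_iL_i(y)$ is a nonzero rational with bounded denominator, and $|L_i(y)|\le (q+1)\max|y_j|\cdot\max|\sigma_i\xi|^q$. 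Transposing via the dual-basis (Khintchine transference, or directly via the trace form whose discriminant is $\pm d^d2^{d-1}$) converts the small linear form into small simultaneous approximation.

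The main obstacle is exactly this transference step with explicit constants: obtaining $b_q$ as stated requires tracking the discriminant $d^{d}2^{d-1}$ (the source of the factor $(q+1)^{(q+1)/2}$ after a Hadamard bound) and the conjugate sizes $|\zeta^i\xi|\le2$ (the source of $4^q$). I would first attempt the direct simultaneous version. Consider the $d\times d$ matrix with rows $(\sigma_i(\omega_0),\dots)$ for the dual basis $\omega_j=\xi^{-j}/(d\xi^{?})$ of trace duality. Then bound $|\det|$ by Hadamard and use Cramer's rule to express $\max|\varepsilon_j|\ge$ (nonzero norm)$/\bigl(\text{conjugate product}\bigr)$. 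Finally, for $q=3$ with $\xi=2^{1/4}$, I would verify $b_3=1/16$ by the same computation with exact conjugates $\pm2^{1/4},\pm i2^{1/4}$, where the norm factors as $N(\alpha)=(a^2-\ldots)$ explicitly, giving a sharper constant than the generic Hadamard bound.
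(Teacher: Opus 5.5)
Your proposal assembles the right ingredients but never turns them into an argument, and the one concrete norm estimate you write down has the wrong shape. You posit $\alpha\in\Z[\xi]$ whose real embedding has size $\max_j|\varepsilon_j|$ and whose other $q$ conjugates are $\lesssim k$, and conclude $1\le|N(\alpha)|\lesssim\max_j|\varepsilon_j|\,k^q$. Such elements exist (e.g.\ $k\xi-p_1$), but they yield only $\max_j\|k\xi^j\|_\T\gtrsim k^{-q}$. That is the Liouville-type bound for a small linear form, and for $q\ge2$ it is strictly weaker than $k^{-1/q}$; it is not the claim ``up to exponent bookkeeping''. You then defer to Khintchine transference, or to a dual-basis/discriminant computation with explicit constants. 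You flag this as the main obstacle and leave it with unresolved placeholders. So the decisive step, producing a nonzero integer bounded by $C\,k\,\varepsilon^q$, is missing. With it go the stated $b_q$ and $b_3=1/16$.

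The missing observation is that the conjugate sizes are the other way round; once you see this, no transference is needed. Your own candidate $\alpha=\sum_{j=0}^q p_j\xi^{q-j}$ (your first guess $\sum_j p_j\xi^{-j}$ times $\xi^q$) works. Write $p_j=k\xi^j-\varepsilon_j$, with $p_0=k$ and $\varepsilon_0=0$. Then the real embedding of $\alpha$ is $(q+1)k\xi^q-\sum_j\varepsilon_j\xi^{q-j}$, which has size $k$. For $\zeta=e^{2\pi i/(q+1)}$ and $1\le i\le q$, the relation $\sum_{j=0}^q\zeta^{-ij}=0$ kills the $k$-term and leaves $\sigma_i(\alpha)=-\sum_j\varepsilon_j(\zeta^i\xi)^{q-j}=O(\varepsilon)$. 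This is the precise form of your remark that $\sum_jp_jx^{q-j}$ is ``$k\cdot\prod(\cdots)$ plus an error'': the polynomial $\sum_j\xi^jx^{q-j}=(x^{q+1}-2)/(x-\xi)$ vanishes at the nonreal conjugates, not at the real one. Since $\alpha\neq0$ (its $\xi^q$-coefficient is $k$) and $N(\alpha)\in\Z$, you get $1\le|N(\alpha)|\lesssim k\varepsilon^q$ directly.

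The paper runs exactly this argument in matrix form. The determinant $\det[v,Av,\ldots,A^qv]$ equals $\pm N(\alpha)$, and it is a nonzero integer because $v=(k,p_1,\ldots,p_q)$ is cyclic for the companion matrix of the irreducible polynomial $X^{q+1}-2$. After subtracting $\xi^jv$ from the $(j+1)$st column, one column has $\ell^2$-norm at most $\frac52\sqrt{q+1}\,k$ and the other $q$ have norm at most $4\sqrt{q+1}\,\varepsilon$. Hadamard's inequality then gives $1\le\frac52\,4^q(q+1)^{(q+1)/2}k\varepsilon^q$. The factor $(q+1)^{(q+1)/2}$ comes from these $\ell^\infty$-to-$\ell^2$ comparisons, not from the discriminant. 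Finally, $b_3=1/16$ needs no separate computation, since $\frac52\cdot4^3\cdot4^2=2560<16^3$.
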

\begin{proof}
Put $m=q+1$ and let $A$ be the integer matrix defined by
\[
 A(x_0,x_1,\ldots,x_q)^T=(x_1,x_2,\ldots,x_q,2x_0)^T.
\]
Its characteristic polynomial is $X^m-2$, which is irreducible over $\Q$ by Eisenstein's criterion. For each $k\ge1$, choose nearest integers $p_j$ to $k\xi_q^j$ and write
\[
 v=(k,p_1,\ldots,p_q)^T,
 \quad u=(1,\xi_q,\ldots,\xi_q^q)^T,
 \quad \varepsilon:=\max_j|p_j-k\xi_q^j|.
\]
The determinant $\det[v,Av,\ldots,A^qv]$ is a nonzero integer. Indeed, if it vanished, there would be a nonzero polynomial $P\in\Q[X]$ of degree at most $q$ with $P(A)v=0$. Since $P$ is relatively prime to $X^m-2$, B\'ezout's identity would then make $P(A)$ invertible, forcing $v=0$. Its first coordinate is $k\ge1$, a contradiction.

Now $Au=\xi_qu$, and $v=ku+e$ with $\|e\|_\infty\le\varepsilon\le1/2$. Since $\xi_q^j<2$ for every $1 \leq j\le q$, one has
\[
 \|v\|_2\le\tfrac52\sqrt m\,k.
\]
For $1\le j\le q$, every row of $A^j$ has one nonzero entry, equal to either $1$ or $2$, so $\|A^j\|_{\infty\to\infty}\le2$. Consequently,
\[
 \|A^jv-\xi_q^jv\|_2
 =\|A^je-\xi_q^je\|_2
 \le4\sqrt m\,\varepsilon.
\]
Subtract $\xi_q^j$ times the first column from the $(j+1)$st column in the determinant. Hadamard's inequality gives
\[
 1\le|\det[v,Av,\ldots,A^qv]|
 \le\frac52\,4^q m^{(q+1)/2}k\varepsilon^q.
\]
Rearranging proves \eqref{eq:algebraic-BA}--\eqref{eq:explicit-BA-constant}. When $q=3$, the coefficient on the right is $2560<16^3$, which proves the final assertion.
\end{proof}

Algebraic constructions of badly approximable vectors are classical; see \cite[Section~3]{HS2004}. Lemma~\ref{lem:algebraic-BA} specifies both the vector and a valid Diophantine constant.

\subsection{The sixteen-map system}
Let $E$ be the attractor of the maps in \eqref{eq:intro-sixteen-maps}, and let $E_N$ be its disk iterates. Each first-level image disk has radius $1/16$. The thirteen real centers have mutual distances at least $1/7>1/8$ and modulus at most $6/7$. The remaining centers have modulus at most $1/\sqrt2$, mutual distances at least $1/2$, and distance at least $1/2$ from every real center. Thus the distance between any two first-level disks is at least $1/7-1/8>0$.

A separation constant and an interior radius in Definition~\ref{def:disk-separation} are therefore
\begin{equation}\label{eq:sixteen-geometry-constants}
 g=\frac17-\frac18=\frac1{56},\qquad
 R=\frac67+\frac1{16}=\frac{103}{112}<1.
\end{equation}
Indeed, $1/\sqrt2+1/16<R$, so every first-level disk lies in $\overline{B(0,R)}$. Proposition~\ref{prop:regularity} applies, and the rotation vector $(\xi,\xi^2,\xi^3)$ satisfies \eqref{eq:BA} with $b=1/16$ by Lemma~\ref{lem:algebraic-BA}.

The collinear row in \eqref{eq:intro-sixteen-maps} concerns the cylinder centers. Each $F_\omega(E)$ is a similar copy of the entire attractor. In particular, if
\[
 \Lambda_\omega:=b_\omega+\ee{\tau_\omega}\R
\]
is the line containing its row centers, then $E\subset D$ and the positions of the three rotating disks give
\begin{equation}\label{eq:sixteen-off-row-descendants}
 \dist\bigl(F_\omega(T_\ell(E)),\Lambda_\omega\bigr)
 \ge\left(\frac12-\frac1{16}\right)16^{-|\omega|}
 =\frac7{16}\,16^{-|\omega|}
 \qquad\forall\ell\in\{1,2,3\}.
\end{equation}
Thus off-row descendants occur at a fixed relative distance inside every cylinder. A nonrotating symbol preserves the accumulated row direction, while a symbol $T_\ell$ adds the angle $2\pi\xi^\ell$. The $13^n$ words using only nonrotating symbols have total symbolic mass $(13/16)^n\to0$.

Rotation leaves a circular disk unchanged; its effect is visible in the positions of the descendants. Figure~\ref{fig:sixteen-disks} displays $E_1,E_2$ and magnified configurations inside $D_{T_1}:=T_1(D)$. For every $N\ge1$, disk separation gives the exact identity
\begin{equation}\label{eq:sixteen-magnified-iterate}
 \mathcal Z_N:=16\bigl((E_N\cap D_{T_1})-v_1\bigr)
 =\ee{\xi}E_{N-1}.
\end{equation}
This magnification translates and rescales the configuration without changing the coordinate directions.

\begin{figure}[t]
\centering
\begingroup
\definecolor{sixteenblue}{RGB}{31,93,153}
\newcommand{\SixteenFirstStage}{%
 \foreach \k in {0,...,12}{%
  \fill[black] ({(\k-6)/7},0) circle[radius=0.0625];%
 }%
 \foreach \cx in {-0.5,0,0.5}{%
  \fill[sixteenblue] (\cx,0.5) circle[radius=0.0625];%
 }%
}
\newcommand{\SixteenSecondStage}{%
 \foreach \h in {0,...,12}{%
  \pgfmathsetmacro{\parentx}{(\h-6)/7}%
  \begin{scope}[shift={(\parentx,0)},scale=0.0625]
   \SixteenFirstStage
  \end{scope}%
 }%
 \foreach \parentx/\rotangle in
 {-0.5/68.11456140098,0/149.11688245431,0.5/245.44541898267}{%
  \begin{scope}[shift={(\parentx,0.5)},rotate=\rotangle,scale=0.0625]
   \SixteenFirstStage
  \end{scope}%
 }%
}
\newcommand{\SixteenParentCircle}{%
 \draw[gray!45,line width=0.25pt] (0,0) circle[radius=1];%
}
\begin{tikzpicture}[x=1cm,y=1cm]
 \begin{scope}[scale=2]
  \SixteenParentCircle
  \SixteenFirstStage
  \node[font=\small,anchor=south] at (0,1.12) {(a) $E_1$};
  \node[font=\scriptsize,anchor=south,text=sixteenblue] at (-0.5,0.59) {$T_1$};
  \node[font=\scriptsize,anchor=south,text=sixteenblue] at (0,0.59) {$T_2$};
  \node[font=\scriptsize,anchor=south,text=sixteenblue] at (0.5,0.59) {$T_3$};
 \end{scope}
 \begin{scope}[shift={(5.2,0)},scale=2]
  \SixteenParentCircle
  \SixteenSecondStage
  \draw[sixteenblue,densely dashed,line width=0.3pt]
    (-0.5,0.5) circle[radius=0.0625];
  \node[font=\scriptsize,anchor=east,text=sixteenblue] at (-0.61,0.5) {$T_1(D)$};
  \node[font=\small,anchor=south] at (0,1.12) {(b) $E_2$};
 \end{scope}
 \begin{scope}[shift={(0,-4.8)},scale=2]
  \SixteenParentCircle
  \begin{scope}[rotate=68.11456140098]
   \SixteenFirstStage
  \end{scope}
  \node[font=\small,anchor=south] at (0,1.12) {(c) $\mathcal Z_2$};
  \node[font=\footnotesize,anchor=north] at (0,-1.09) {Magnification $\times16$; axes fixed};
 \end{scope}
 \begin{scope}[shift={(5.2,-4.8)},scale=2]
  \SixteenParentCircle
  \begin{scope}[rotate=68.11456140098]
   \SixteenSecondStage
  \end{scope}
  \node[font=\small,anchor=south] at (0,1.12) {(d) $\mathcal Z_3$};
  \node[font=\footnotesize,anchor=north] at (0,-1.09) {Magnification $\times16$; axes fixed};
 \end{scope}
\end{tikzpicture}
\endgroup
\caption{The first two disk iterates and magnified portions of the second and third iterates. The lower panels show $\mathcal Z_2$ and $\mathcal Z_3$ from \eqref{eq:sixteen-magnified-iterate}, so the rotation of the descendants is retained. The dashed circle in (b) marks the parent $T_1(D)$; the gray circles mark the containing disks. Black terminal disks have a nonrotating final symbol, and blue disks have a rotating final symbol. All disk radii are drawn to scale.}
\label{fig:sixteen-disks}
\end{figure}

\begin{corollary}\label{cor:sixteen-endpoint}
Let $E$ be the attractor of \eqref{eq:intro-sixteen-maps}, and let $E_N$ be its disk iterates. This system has $L=16$ symbols, of which $r=13$ are nonrotating and $q=3$ are rotating, and its rotation vector satisfies \eqref{eq:BA} with $b=1/16$. Set
\[
 A_*:=\frac{16^{8/3}}{13},\qquad
 \beta_*:=\frac{\log13}{\log A_*}
 =\frac{\log13}{(8/3)\log16-\log13}.
\]
There is a constant $C\ge1$, depending only on these sixteen similarities, such that, for every integer $N\ge0$,
\begin{equation}\label{eq:sixteen-endpoint}
 \sup_{0\le\theta<\pi}\cH^1(\pi_\theta(E_N))
 +\sup_{0\le\theta<\pi}\cH^1\bigl(\pi_\theta(\Nbd{16^{-N}}{E})\bigr)
 \le C(N+1)^{-\beta_*}[\log(2+N)]^{\beta_*}.
\end{equation}
Consequently, for every $\beta_0\in(0,\beta_*)$, there is a constant $C_{\beta_0}$ such that the left-hand side is at most $C_{\beta_0}(N+1)^{-\beta_0}$ for every $N\ge0$. In particular, $\beta_0=53/100$ is admissible. All constants are independent of the projection direction and construction depth.
\end{corollary}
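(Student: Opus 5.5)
The corollary is a specialization of Theorem~\ref{thm:structural-projection}, so I would check its hypotheses for the system \eqref{eq:intro-sixteen-maps} and then evaluate the exponent.

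\emph{Hypotheses.} The alphabet splits as $\cH=\{H_0,\ldots,H_{12}\}$ and $\cB=\{T_1,T_2,T_3\}$, so $r=13\ge2$, $q=3$ and $L=16$. Each $H_j$ has the form \eqref{eq:collinear-maps} with $c_{H_j}=(j-6)/7\in\R$. Each $T_\ell$ has rotation parameter $\tau_\ell=\xi^\ell$ with $\xi=2^{1/4}=\xi_3$. Lemma~\ref{lem:algebraic-BA} with $q=3$ gives \eqref{eq:BA} for $(\xi,\xi^2,\xi^3)$ with $b=1/16$. Here I would note that \eqref{eq:BA} is stated for all $k\in\Z\setminus\{0\}$, while the lemma covers $k\ge1$; negative $k$ follow by symmetry of $\|\cdot\|_\T$. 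The disk separation assumptions hold with $g=1/56$ and $R=103/112$ by \eqref{eq:sixteen-geometry-constants}. I would also check $L\ge3$, which was used in defining $\eta_n$.

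\emph{Exponent.} Theorem~\ref{thm:structural-projection} gives
\[
 A=\frac{L^{2+2/q}}{r}=\frac{16^{8/3}}{13}=A_*,
 \qquad \beta=\frac{\log 13}{\log A_*}=\beta_*,
\]
and we have $A_*>1$. The theorem bounds $\sup_\theta\cH^1(\pi_\theta(E_N))$ and $\sup_\theta\cH^1(\pi_\theta(\Nbd{16^{-N}}{E}))$ separately by $C(N+1)^{-\beta_*}[\log(2+N)]^{\beta_*}$. Adding the two bounds and doubling $C$ gives \eqref{eq:sixteen-endpoint}. The power-law consequence for $\beta_0<\beta_*$ is the last sentence of that theorem.

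\emph{Admissibility of $53/100$.} This is the only real obstacle, and it is purely numerical. I must verify $53/100<\beta_*$, which is equivalent to
\[
 100\log13<53\Bigl(\tfrac83\log16-\log13\Bigr),
 \quad\text{i.e.}\quad
 459\log13<424\log16=1696\log2 .
\]
I would confirm this with rigorous rational bounds, $\log 13<2.5650$ and $\log2>0.6931$. These give $459\log13<1177.4$ and $1696\log2>1175.5$, which is too tight at this precision, so sharper bounds are needed. With $\log13=2.564949\ldots$ and $\log2=0.693147\ldots$, one gets $459\log13\approx1177.312$ and $1696\log2\approx1175.578$. This appears to reverse the inequality, so I would recheck against the stated value $\beta_*=0.5312$.

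The recheck: $(8/3)\log16=7.39357$. Subtracting $\log13$ gives $4.82862$, and $2.56495/4.82862=0.53120$. This confirms $\beta_*>0.53$. My earlier cross-multiplication was wrong. The correct form is
\[
 100\log13<53\bigl(\tfrac{32}{3}\log2-\log13\bigr)
 \iff 153\log13<\tfrac{1696}{3}\log2
 \iff 459\log13<1696\log2 .
\]
This evaluates to $1177.31$ on the left against $1175.58$ on the right, which is again false, so there is an arithmetic slip somewhere. At this point I would stop re-deriving the cross-multiplication by hand and simply certify the decimal value $\beta_*=0.53119\ldots>0.53$ using interval bounds on $\log2$ and $\log13$ to five digits.

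All constants come from Theorem~\ref{thm:structural-projection}, so they are uniform in $\theta$ and $N$.
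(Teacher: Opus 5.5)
Your reduction matches the paper's. You check that the sixteen maps satisfy the hypotheses of Theorem~\ref{thm:structural-projection}: collinear part $c_{H_j}=(j-6)/7\in\R$, the $q=3$ case of Lemma~\ref{lem:algebraic-BA} with $b=1/16$, and the constants \eqref{eq:sixteen-geometry-constants}. You then read off $A=A_*$ and $\beta=\beta_*$, and invoke the theorem's neighbourhood clause. The paper makes that clause explicit via $16(1-R)=9/7>1$, i.e.\ $\Nbd{16^{-N}}{E}\subset E_{N-1}$. That part is fine.

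The gap is in the only step specific to this corollary, the admissibility of $\beta_0=53/100$. As written it is not established, and it contains a false claim. Your first equivalence has the inequality reversed. The condition $53/100<\beta_*$ means $\log13/(\tfrac83\log16-\log13)>53/100$, that is,
\[
 100\log13>53\Bigl(\tfrac83\log16-\log13\Bigr)
 \iff 459\log13>424\log16=1696\log2 ,
\]
which is $13^{459}>16^{424}$.

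Your own values $459\log13\approx1177.31$ and $1696\log2\approx1175.58$ therefore confirm the required inequality. The ``arithmetic slip'' you could not locate is that initial reversal, which you carried into the recheck as well (it again writes $<$).

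The same reversal explains your first attempt, which used bounds on the wrong side. You need a lower bound for $\log13$ and an upper bound for $\log2$. Already $\log13>2.5649$ and $\log2<0.6932$ give
\[
 459\log13>1177.28>1175.67>1696\log2,
\]
with room to spare. Your fallback, the decimal $\beta_*\approx0.53120$ from rounded logarithms, has the right value. It is not a proof until such one-sided bounds are supplied.

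The paper avoids logarithms with an integer certificate. From $13^{10}=137858491849>137438953472=2\cdot16^9$, raising to the $46$th power and dividing by $13$ gives $13^{459}>2^{46}16^{414}/13=\tfrac{64}{13}\,16^{424}>16^{424}$.
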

\begin{proof}
The disk separation conditions were verified in \eqref{eq:sixteen-geometry-constants}, and Lemma~\ref{lem:algebraic-BA} supplies the stated Diophantine constant for $(\xi,\xi^2,\xi^3)$. Thus Theorem~\ref{thm:structural-projection} applies with $L=16$, $r=13$, and $q=3$. Its parameter $A=L^{2+2/q}/r$ is $A_*$, and its exponent $\beta=\log r/\log A$ is precisely $\beta_*$.

More explicitly, fix the discrepancy constant $C_{\mathrm{disc}}$ for this system and put $C_0=C_{\mathrm{disc}}^{2/3}$. For each integer $n\ge1$, the parameters in Section~\ref{sec:stacks} become
\[
 \begin{aligned}
 K_n&=13^n,\qquad \eta_n=\frac{16^{-n}}{2\pi},\\
 J_n&=\left\lceil C_0(2\pi)^{2/3}16^{2n/3}\right\rceil,\\
 p_n&=\frac1{2\pi}\frac{13^n}{16^{2n}},\\
 B_n&=\left\lceil2\pi n(\log13)\left(\frac{16^2}{13}\right)^n\right\rceil,\\
 M_n&=B_n(J_n+n).
 \end{aligned}
\]
Here $p_n$ is the uniform lower bound for conditional success, and $B_n$ is chosen so that $e^{-p_nB_n}\le13^{-n}$. Proposition~\ref{prop:first-success-cover} therefore gives
\[
 \sup_{0\le\theta<\pi}\cH^1(\pi_\theta(E_{M_n}))\le6\,13^{-n},
 \qquad M_n\asymp nA_*^n.
\]
The all-depth interpolation in the proof of Theorem~\ref{thm:structural-projection} now gives
\[
 \sup_{0\le\theta<\pi}\cH^1(\pi_\theta(E_N))
 \lesssim(N+1)^{-\beta_*}[\log(2+N)]^{\beta_*}
 \qquad\forall N\in\N_{\ge0}.
\]
Indeed, for $M_n\le N<M_{n+1}$, nestedness bounds the projection by $6\,13^{-n}$, while $M_n\asymp nA_*^n$ implies $n\lesssim\log(2+N)$ and $A_*^n\gtrsim N/n$. The identity $13=A_*^{\beta_*}$ yields the displayed estimate; the finitely many depths below $M_1$ are absorbed into the constant.

For the neighbourhood term, the interior radius $R=103/112$ satisfies $16(1-R)=9/7>1$. Lemma~\ref{lem:neighbourhood-comparison} consequently gives
\[
 \Nbd{16^{-N}}{E}\subset E_{N-1}\qquad\forall N\in\N_{\ge1}.
\]
The iterate estimate at depth $N-1$ has the same right-hand side up to a fixed factor. The depth $N=0$ is bounded using $\Nbd{1}{E}\subset\Nbd{1}{D}$. This proves \eqref{eq:sixteen-endpoint}.

For every $\beta_0<\beta_*$, the function
\[
 (N+1)^{-(\beta_*-\beta_0)}[\log(2+N)]^{\beta_*}
\]
is bounded for $N\ge0$, so the logarithmic factor can be absorbed into the constant in the weaker power bound. Finally, $\beta_*>53/100$ is equivalent to
\[
 13^{459}>16^{424}.
\]
A short integer certificate is
\[
 13^{10}=137858491849>137438953472=2\cdot16^9.
\]
Raising this inequality to the power $46$ and dividing by $13$ gives
\[
 13^{459}>\frac{2^{46}16^{414}}{13}
 =\frac{64}{13}\,16^{424}>16^{424},
\]
as required.
\end{proof}

\subsection{A family with an arbitrary prescribed exponent}
We now vary the total number $L$ of similarities and the number $q$ of rotating similarities, subject to the separation conditions below.

Fix integers $q\ge2$ and $L$ satisfying
\begin{equation}\label{eq:general-packing-restrictions}
 L\ge q+2,\qquad L>2(q-1),\qquad L>4.
\end{equation}
Put $r=L-q$, $\xi_q=2^{1/(q+1)}$, and define
\begin{equation}\label{eq:general-explicit-maps}
 \begin{split}
 H_j(z)&:=L^{-1}z+\frac{2j-r+1}{r},\qquad \forall j\in\{0,\ldots,r-1\},\\
 T_\ell(z)&:=L^{-1}\ee{\xi_q^\ell}z
       -\frac12+\frac{\ell-1}{q-1}+\frac i2,
       \qquad \forall \ell\in\{1,\ldots,q\}.
 \end{split}
\end{equation}
There are exactly $r+q=L$ maps, each of contraction factor $L^{-1}$. The row centers have spacing $2/r$ and maximum modulus $1-1/r$. The rotating centers have spacing $1/(q-1)$, modulus at most $1/\sqrt2$, and distance at least $1/2$ from the row centers. Hence Definition~\ref{def:disk-separation} holds with
\[
 g\ge\min\left\{\frac2r-\frac2L,\frac1{q-1}-\frac2L,\frac12-\frac2L\right\}>0,
 \qquad
 R=\max\left\{1-\frac1r+\frac1L,\frac1{\sqrt2}+\frac1L\right\}<1.
\]
The strict inequalities follow from \eqref{eq:general-packing-restrictions} and $r=L-q<L$. Lemma~\ref{lem:algebraic-BA} gives \eqref{eq:BA} with $\tau=(\xi_q,\ldots,\xi_q^q)$ and $b=b_q$.

The exponent in Theorem~\ref{thm:structural-projection} is
\begin{equation}\label{eq:family-exponent}
 \beta(q,L):=\frac{\log(L-q)}{(2+2/q)\log L-\log(L-q)}.
\end{equation}
For fixed $q$,
\begin{equation}\label{eq:limit-fixed-q}
 \lim_{L\to\infty}\beta(q,L)=\frac q{q+2}.
\end{equation}
Thus every positive exponent below $q/(q+2)$ is obtained by fixing $q$ and taking $L$ sufficiently large. In particular, $q=3$ gives every positive exponent below $3/5$, and the union of these ranges over $q\ge2$ is $(0,1)$.

For an explicit choice for each prescribed $\alpha\in(0,1)$, fix an integer
\begin{equation}\label{eq:q-choice}
 q>\frac{2\alpha}{1-\alpha},\qquad q\ge2,
\end{equation}
and set
\[
 t:=\frac{\alpha(2+2/q)}{1+\alpha}<1.
\]
Choose any integer
\begin{equation}\label{eq:L-choice}
 L\ge\left\lceil\max\left\{2q+2,5,\frac{4q}{1-t}\right\}\right\rceil.
\end{equation}
For $0\le x\le1/2$, one has $\log(1-x)\ge-2x$. Since \eqref{eq:L-choice} gives $q/L\le1/2$ and $2q/L\le(1-t)/2$,
\[
 \log_L(L-q)=1+\frac{\log(1-q/L)}{\log L}
 \ge1-\frac{2q}{L\log L}
 \ge1-\frac{1-t}{2\log L}>t,
\]
where the strict inequality uses $L\ge5$. Writing $x=\log_L(L-q)$, the denominator $2+2/q-x$ is positive, and
\[
 \beta(q,L)=\frac{x}{2+2/q-x}>\alpha
 \quad\Longleftrightarrow\quad
 x>\frac{\alpha(2+2/q)}{1+\alpha}=t.
\]
The choices \eqref{eq:q-choice}--\eqref{eq:L-choice} also imply \eqref{eq:general-packing-restrictions}. Theorem~\ref{thm:structural-projection}, with the logarithmic factor absorbed using $\beta(q,L)>\alpha$, therefore gives \eqref{eq:arbitrary-projection}. The resulting system is fixed once $\alpha$, $q$, and $L$ are fixed. The estimate is uniform in depth and direction; its constant may depend on these choices.

\section{The analytic capacity of the iterates}\label{sec:capacity}
Throughout this section the system is fixed, and implicit constants may depend on its similarities and separation constants. The capacity estimate requires disk separation, noncollinearity, and the critical relation between branching and contraction.

\begin{theorem}\label{thm:general-capacity}
Let $\cF$ be a system of exactly $L$ similarities of contraction factor $L^{-1}$ satisfying Definition~\ref{def:disk-separation}, and suppose that its attractor $E$ is not contained in an affine line. Then, for every integer $N\ge0$,
\begin{equation}\label{eq:general-capacity}
 \gamma(E_N)\asymp(N+1)^{-1/2}.
\end{equation}
Consequently, for all sufficiently small $\delta>0$,
\begin{equation}\label{eq:neighbourhood-capacity}
 \gamma(\Nbd{\delta}{E})\asymp(\log\delta^{-1})^{-1/2}.
\end{equation}
The constants depend only on the fixed system.
\end{theorem}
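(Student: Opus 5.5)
Our plan follows the route of Mateu--Tolsa--Verdera for the four-corner set, with Tolsa's characterization
\[
 \gamma(K)\approx\sup\{\mu(K):\mu\ \text{supported on } K,\ \mu(B(x,t))\le t,\ c^2(\mu)\le\mu(K)\},
\]
where $c^2(\mu)$ denotes the Menger curvature of $\mu$. Throughout we work with the normalized uniform measure on $E_N$, namely $\mu_N:=\sum_{|\omega|=N}L^{-N}\,\mathcal{L}^2|_{D_\omega}/(\pi L^{-2N})$. By Proposition~\ref{prop:regularity} and the disk separation, $\mu_N$ has linear growth $\mu_N(B(z,t))\lesssim t$ uniformly in $N$. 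The discussion after Proposition~\ref{prop:regularity} extends this bound to all centers and radii, and inside a single disk it still holds because $L^{-N}\cdot t/L^{-N}=t$ once $t\le L^{-N}$.

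\textbf{Curvature computation.} We expand $c^2(\mu_N)$ over triples according to the lengths of common prefixes. Take a triple $x,y,z$ whose first splitting happens at depth $k$, in a cylinder $D_v$ with $|v|=k$. By self-similarity and scale invariance, the curvature of such triples is determined by the geometry of $\mu_{N-k}$ at unit scale, rescaled by $L^{-k}$. The curvature density $c(x,y,z)^2$ scales by $L^{2k}$, while the mass cubed scales by $L^{-3k}$ on each of the $L^k$ cylinders, so every level contributes $O(1)$ in total. Triples that meet inside a single depth-$N$ disk contribute $O(1)$ as well, by a direct computation for the normalized area measure on a disk. Summing over $0\le k\le N$ gives
\[
 c^2(\mu_N)\lesssim N+1.
\]
Noncollinearity is what supplies the matching lower bound $c^2(\mu_N)\gtrsim N+1$. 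Since $E$ is not contained in a line, there are three first-level disks, or descendants at some fixed depth $k_0$, containing triangles of angle bounded below. Hence each level $k$ contributes at least a fixed positive amount, and this persists uniformly at all scales.

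\textbf{Lower bound for $\gamma$.} Put $\nu:=\lambda\mu_N$ with $\lambda=c(N+1)^{-1/2}$. Then $\nu$ has linear growth, and
\[
 c^2(\nu)=\lambda^3c^2(\mu_N)\lesssim\lambda^3(N+1)\le\lambda=\nu(E_N)
\]
once $c$ is chosen small. Tolsa's theorem (or already Melnikov's inequality, in the form $\gamma\gtrsim\mu(K)^2/(\mu(K)+c^2(\mu))$) then gives $\gamma(E_N)\gtrsim(N+1)^{-1/2}$.

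\textbf{Upper bound for $\gamma$.} This is the main obstacle. By Tolsa, $\gamma(E_N)\approx\gamma_+(E_N)$, so it suffices to bound $\nu(E_N)$ for an arbitrary $\nu$ on $E_N$ with linear growth and $c^2(\nu)\le\nu(E_N)$. Let $m_\omega:=\nu(D_\omega)$ be the cylinder masses, and set $\theta_v:=m_v L^{|v|}$, the normalized density on $D_v$. Noncollinearity gives the following local estimate. The curvature of the triples that split at $D_v$ is bounded below by $\gtrsim L^{|v|}\sum m_{va}m_{va'}m_{va''}$, taken over a fixed noncollinear triple of children, or more robustly over descendants at depth $k_0$. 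This term controls $L^{-|v|}\theta_v^3$ from below, up to a correction by the variance of the children's densities. The ``telescoping inequality for cylinder masses'' is designed to handle exactly this correction. One writes $\theta_{va}-\theta_v$ and sums the squared increments along the tree. These increments are controlled by curvature or by linear growth, since $\theta\lesssim1$, and they telescope because the sum of squared increments is bounded by the terminal $L^2$ norm. The resulting key estimate is
\[
 c^2(\nu)\gtrsim\sum_{k=0}^{N}\sum_{|v|=k}L^{-k}\theta_v^3-C\,\nu(E_N).
\]
Applying Jensen to each level (each level's $\theta$ has average $\nu(E_N)$ against $L^{-k}$ weights) then gives
\[
 \nu(E_N)\gtrsim c^2(\nu)\gtrsim(N+1)\,\nu(E_N)^3-C\,\nu(E_N),
\]
and hence $\nu(E_N)\lesssim(N+1)^{-1/2}$. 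Making the local lower bound robust is the hard step. I would first try to establish it using only a fixed noncollinear triple of depth-$k_0$ children: if one of the three children carries small mass, the mass imbalance is paid for by the telescoping increments.

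\textbf{Neighbourhoods.} Lemma~\ref{lem:neighbourhood-comparison} gives
\[
 E_{k_+(\delta)}\subset\Nbd{\delta}{E}\subset E_{k_-(\delta)},\qquad k_\pm(\delta)=\log_L\delta^{-1}+O(1).
\]
Monotonicity of $\gamma$ together with the estimate for $\gamma(E_N)$ then yields the neighbourhood capacity bound, the claimed $(\log\delta^{-1})^{-1/2}$ asymptotics.
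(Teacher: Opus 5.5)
Your proposal is correct and follows the paper's proof essentially step for step. The lower bound uses the uniform area measure with $c^2(\mu_N)\lesssim N+1$; the upper bound combines a fixed noncollinear triple of depth-$b$ cylinders, the telescoping of $P_k=\sum_{|\omega|=k}m_\omega^2/\ell_\omega$, and H\"older at each level. The step you flag as hard is the paper's balanced/unbalanced dichotomy on the grouped alphabet $\cA^b$, where the curvature domains are disjoint: either each distinguished child has mass at least $m_\omega/(2J)$ and curvature pays for $m_\omega^3/\ell_\omega^2$, or $\Delta_\omega\ge m_\omega^2/(4J\ell_\omega)$ and $m_\omega\le\ell_\omega$ give the same cubic bound; also, the prefactor in your local curvature bound should be $L^{2|v|}$ rather than $L^{|v|}$.
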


\subsection{The curvature characterization}
For three distinct points $x,y,z\in\C$, their \emph{Menger curvature} \cite{Tolsa2003,Tolsa2005} is
\[
 c(x,y,z):=\frac1{R(x,y,z)},
\]
where $R(x,y,z)$ is the radius of their circumcircle. We set $c=0$ for a collinear triple or when two points coincide. For a finite positive Borel measure $\nu$, write
\begin{equation}\label{eq:curvature-energy}
 c^2(\nu):=\iiint c(x,y,z)^2\,d\nu(x)\,d\nu(y)\,d\nu(z).
\end{equation}
We use the following characterization, due to Tolsa \cite{Tolsa2003}; the stated form is also recorded in \cite[Theorem~A]{Tolsa2005}.

\begin{theorem}[Curvature characterization of analytic capacity]\label{thm:Tolsa}
For every compact $K\subset\C$,
\begin{equation}\label{eq:Tolsa-characterization}
 \gamma(K)\asymp\sup_\nu\nu(K),
\end{equation}
where the supremum is over finite positive Borel measures supported on $K$ such that
\begin{equation}\label{eq:admissible-capacity-measures}
 \begin{aligned}
 \nu(B(z,t))&\le t
 \qquad \forall z\in\C,\ \forall t>0,\\
 c^2(\nu)&\le\nu(K).
 \end{aligned}
\end{equation}
The comparison constants in \eqref{eq:Tolsa-characterization} are absolute.
\end{theorem}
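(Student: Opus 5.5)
We will not reprove this characterization from scratch. It is the combination of Tolsa's comparability theorem $\gamma\asymp\gamma_+$ with the classical description of $\gamma_+$ by linear-growth measures of controlled curvature. The plan is to reduce the displayed form \eqref{eq:Tolsa-characterization}--\eqref{eq:admissible-capacity-measures} to these two ingredients and to track only the normalizations.

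\emph{First ingredient.} For compact $K$, let
\[
 \gamma_+(K):=\sup\bigl\{\nu(K):\nu\ \text{supported on }K,\ \|\mathcal C\nu\|_{L^\infty(\C)}\le1\bigr\},
\]
where $\mathcal C\nu=\nu*(1/z)$ is the Cauchy transform. Trivially $\gamma_+\le\gamma$, since $f=\mathcal C\nu$ is admissible in \eqref{eq:intro-capacity-definition} with $|f'(\infty)|=\nu(K)$. Tolsa's theorem \cite{Tolsa2003} gives the reverse inequality $\gamma\lesssim\gamma_+$ with an absolute constant. We will simply cite this deep step, which is proved via a $T(b)$ theorem for non-doubling measures together with an induction on scales. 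It is the genuine obstacle, and there is no shortcut to it here.

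\emph{Second ingredient.} We need
\[
 \gamma_+(K)\asymp\sup\{\nu(K)\},
\]
with the supremum taken over measures satisfying \eqref{eq:admissible-capacity-measures}. For the direction ``$\gtrsim$'', take $\nu$ with linear growth and $c^2(\nu)\le\nu(K)$. Melnikov's identity expresses the symmetrized $L^2(\nu)$ norm of the truncated Cauchy integrals through curvature:
\[
 \|\mathcal C_\varepsilon\nu\|_{L^2(\nu)}^2=\tfrac16c^2_\varepsilon(\nu)+O(\nu(K)).
\]
This gives $\|\mathcal C_\varepsilon 1\|^2_{L^2(\nu)}\lesssim\nu(K)$. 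A good-$\lambda$/$T(1)$-type localization, in Tolsa's form for non-doubling measures, then produces a subset $F$ with $\nu(F)\gtrsim\nu(K)$ on which $\mathcal C_\nu$ is $L^2$-bounded. Duality and a Cotlar-type estimate subsequently give a function $0\le h\le1$ with $\int h\,d\nu\gtrsim\nu(K)$ and $\|\mathcal C(h\nu)\|_\infty\lesssim1$.

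For the direction ``$\lesssim$'', take $\nu$ with $\|\mathcal C\nu\|_\infty\le1$. Boundedness forces the linear growth $\nu(B(z,t))\lesssim t$, obtained by testing $\mathcal C\nu$ against a bump on $B(z,t)$. It also forces $c^2(\nu)\lesssim\nu(K)$, by Melnikov's identity applied to $\|\mathcal C_\varepsilon\nu\|_{L^2(\nu)}^2\le\|\mathcal C\nu\|^2_\infty\,\nu(K)$ and letting $\varepsilon\to0$.

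\emph{Normalization.} Scaling $\nu$ by a small absolute constant converts ``$\lesssim t$'' into the exact bound $\nu(B(z,t))\le t$ and the curvature bound into $c^2(\nu)\le\nu(K)$. This works because $c^2$ scales cubically while $\nu(K)$ scales linearly, so shrinking $\nu$ only improves the curvature condition. Combining the two ingredients yields \eqref{eq:Tolsa-characterization} with absolute constants, in the form recorded as \cite[Theorem~A]{Tolsa2005}. This form is what the subsequent capacity argument will use.
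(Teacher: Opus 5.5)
Your proposal is correct and follows the paper. The paper also does not reprove this result: it cites Tolsa \cite{Tolsa2003} and \cite[Theorem~A]{Tolsa2005}, and adds only a normalization remark (growth for centers on the support upgrades to all centers by doubling the radius and halving the measure), which plays the same role as your scaling by a small absolute constant.

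Your sketch of the two underlying ingredients goes beyond what the paper records, and it holds up except for one loose step, the inequality $\|\mathcal C_\varepsilon\nu\|_{L^2(\nu)}^2\le\|\mathcal C\nu\|_\infty^2\,\nu(K)$. A Lebesgue-a.e. bound does not by itself control $\mathcal C_\varepsilon\nu$ on the support of $\nu$. However, comparing $\mathcal C_\varepsilon\nu(x)$ with the area average of $\mathcal C\nu$ over $B(x,\varepsilon)$ gives $|\mathcal C_\varepsilon\nu(x)|\le\|\mathcal C\nu\|_\infty+C$ once linear growth is known, and that is all you need.
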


The formulation with growth imposed only for centers in the support is equivalent up to absolute constants. Indeed, a ball meeting the support is contained in a ball of twice its radius centered on the support; replacing the measure by one half of itself gives the stated growth bound and preserves the curvature inequality.

\subsection{The lower capacity bound}
Put $\varepsilon=L^{-N}$ and let $\mu_N$ be the probability measure assigning mass $L^{-N}$ uniformly by area to each depth-$N$ disk. Since those disks are disjoint,
\begin{equation}\label{eq:area-measure}
 d\mu_N(z)=\frac1{\pi\varepsilon}\one_{E_N}(z)\,d\cH^2(z).
\end{equation}
Its mass in every depth-$k$ cylinder, $k\le N$, is $L^{-k}$.

\begin{lemma}\label{lem:area-growth}
Uniformly in $N$, $z\in\C$, and $t>0$,
\begin{equation}\label{eq:area-growth}
 \mu_N(B(z,t))\le C\min\{t,t^2/\varepsilon\}.
\end{equation}
\end{lemma}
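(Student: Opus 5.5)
We prove the two bounds $\mu_N(B(z,t))\lesssim t$ and $\mu_N(B(z,t))\lesssim t^2/\varepsilon$ separately, splitting into regimes according to the size of $t$ relative to $\varepsilon=L^{-N}$. The minimum is then controlled by whichever bound applies.

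\emph{The quadratic bound.} This follows directly from the density in \eqref{eq:area-measure}. Since $\mu_N\le(\pi\varepsilon)^{-1}\cH^2$, we get
\[
 \mu_N(B(z,t))\le\frac{\pi t^2}{\pi\varepsilon}=\frac{t^2}{\varepsilon}
\]
for every $z$ and $t$. When $t\le\varepsilon$ we have $t^2/\varepsilon\le t$, so this single inequality already gives the full estimate in that range.

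\emph{The linear bound for $t>\varepsilon$.} Here we need $\mu_N(B(z,t))\lesssim t$, and we argue as in Proposition~\ref{prop:regularity}.

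First suppose $\varepsilon<t<1$. Choose $k\le N$ with $L^{-k}\le t<L^{-k+1}$. By Lemma~\ref{lem:first-difference}, distinct depth-$k$ disks are separated by at least $gL^{-k}$, and each has radius $L^{-k}$. A planar packing argument then shows that $B(z,t)$ meets at most $C_{g,L}$ depth-$k$ disks. For this it is convenient to note that each such disk lies in $B(z,t+2L^{-k})\subset B(z,3L\cdot L^{-k})$ and that the disks have disjoint interiors with area comparable to $L^{-2k}$. Since the support of $\mu_N$ is $E_N\subset E_k$, and each depth-$k$ cylinder has $\mu_N$-mass exactly $L^{-k}$, we obtain
\[
 \mu_N(B(z,t))\le C_{g,L}L^{-k}\le C_{g,L}\,t.
\]
If instead $t\ge1$, then trivially $\mu_N(B(z,t))\le1\le t$.

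\emph{Conclusion and main point.} Combining the two regimes gives $\mu_N(B(z,t))\le C\min\{t,t^2/\varepsilon\}$ with $C$ depending only on $g$ and $L$, uniformly in $N$. The only substantive step is the uniform count of depth-$k$ disks meeting a ball of radius about $L^{-k}$. This count is independent of $k$ by self-similarity: we rescale by $L^{k}$ and use only the separation $g$. No curvature input is needed here.
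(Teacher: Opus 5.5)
Your proof is correct and is essentially the paper's argument: you get the quadratic bound from the density, and the linear bound for $t>\varepsilon$ from the fact that $\mu_N$ gives mass $L^{-k}$ to every cylinder of depth $k\le N$, together with a bounded packing count. The only difference is that you run the packing at the matched depth $k$ directly for $\mu_N$, where disjointness and area alone give at most $9L^2$ disks, so $g$ is not needed. The paper instead sums over depth-$N$ disks, bounds the total by $\mu(B(z,t+2\varepsilon))$, and cites Proposition~\ref{prop:regularity}.
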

\begin{proof}
The density in \eqref{eq:area-measure} gives $\mu_N(B(z,t))\le t^2/\varepsilon$. For $t\ge\varepsilon$, let $\mathcal W$ be the set of depth-$N$ words whose disks meet $B(z,t)$. Every such disk is contained in $B(z,t+2\varepsilon)$, and $\mu(D_w)=\mu_N(D_w)=L^{-N}$ for $w\in\mathcal W$. Hence
\[
 \mu_N(B(z,t))\le\sum_{w\in\mathcal W}\mu(D_w)
 \le\mu(B(z,t+2\varepsilon))\lesssim t,
\]
where Proposition~\ref{prop:regularity} supplies the last inequality. For $t<\varepsilon$, the quadratic estimate implies the linear estimate as well. For $t$ exceeding the diameter of the root disk, use $\mu_N(\C)=1$.
\end{proof}

\begin{lemma}\label{lem:natural-curvature-upper}
One has
\begin{equation}\label{eq:natural-curvature-upper}
 c^2(\mu_N)\lesssim N+1.
\end{equation}
\end{lemma}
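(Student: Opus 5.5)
We bound $c^2(\mu_N)$ by decomposing triples $(x,y,z)$ according to the scale at which they separate in the symbolic tree, and we use the elementary bound $c(x,y,z)\le 2/\max(|x-y|,|x-z|,|y-z|)$ together with the more precise $c(x,y,z)=2\dist(z,L_{xy})/(|x-z||y-z|)$ and its permutations. For a triple of points in depth-$N$ disks, let $k$ be the length of the longest common prefix of all three addresses. Then all three lie in a common depth-$k$ disk $D_v$, and at least two of them lie in distinct children of $v$. The key structural fact is self-similarity: $F_v$ maps $\mu_{N-k}$ restricted to $D$ onto $L^{k}\mu_N|_{D_v}$, and Menger curvature scales as $c(F_vx,F_vy,F_vz)=L^{k}c(x,y,z)$. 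Since $c^2$ of a measure of mass $m$ on a set of diameter $\sim\ell$ carries the factor $m^3\ell^{-2}$, the contribution of triples whose common prefix is exactly $v$ equals $L^{-3k}\cdot L^{2k}=L^{-k}$ times the contribution at the root level (common prefix $\varnothing$) for the measure $\mu_{N-k}$. Summing over the $L^k$ words $v$ of length $k$ gives
\[
 c^2(\mu_N)\le\sum_{k=0}^{N}\Phi(N-k)+(\text{diagonal term}),
\]
where $\Phi(m)$ is the curvature energy of $\mu_m$ over triples not all contained in one first-level disk.

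It therefore suffices to show $\Phi(m)\lesssim1$ uniformly in $m$, together with a bounded contribution from triples lying in a single depth-$N$ disk. The latter equals $L^{-N}$ times the energy of the normalized area measure on a disk of radius $\varepsilon$, i.e. $\lesssim L^{-N}\cdot L^{N}\cdot(\text{const})$ after scaling: normalized area on the unit disk has finite $c^2$, and the rescaling factor is $(L^{-N})^{3}\varepsilon^{-2}\cdot L^{N}=O(1)$ when summed over the $L^N$ disks. So this term is $O(1)$, consistent with the $N+1$ count.

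To bound $\Phi(m)$, split such triples into two cases. In the first case, two points lie in different first-level disks and the third is arbitrary. Then one side has length $\ge g$, and we use the symmetric bound $c(x,y,z)\lesssim 1/\max(|x-z|,|y-z|)$ for whichever configuration applies. Precisely, if $x,y$ are in different first-level disks, then $|x-y|\ge g$, so for any $z$ one of $|x-z|,|y-z|$ is $\ge g/2$; hence $c\le 2\sin\angle/|\cdot|$ is bounded by $4/g$. Thus $c$ is uniformly bounded on all triples not contained in one first-level disk, and $\Phi(m)\le(4/g)^2\mu_m(\C)^3\lesssim1$.

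This bookkeeping is the step I expect to be the main obstacle. The concern is that a triple with two nearby points $x,y$ and a far point $z$ could have large curvature $\sim1/|x-y|$. It does not, because $c\le 2/|x-z|$ whenever $z$ is the vertex opposite a long side: in general $c(x,y,z)=2\sin\angle(x)/|y-z|$, and choosing the angle at the vertex opposite the longest side gives $c\le 2/\mathrm{diam}$. So $c\lesssim 1/\mathrm{diam}\{x,y,z\}$ always, and the decomposition by the first separating level is exact with bounded root-level energy. Summing $k=0,\dots,N$ yields $c^2(\mu_N)\lesssim N+1$. Lemma~\ref{lem:area-growth} is not even needed here; it enters later when normalizing $\mu_N$ to satisfy the growth condition in Theorem~\ref{thm:Tolsa}.
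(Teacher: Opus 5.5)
Your proof is correct, but it is organized differently from the paper's.

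\textbf{The paper's route.} The paper works pointwise. It fixes $x\in E_N$ and splits the pairs $(y,z)$ into dyadic shells $\max\{|x-y|,|x-z|\}\in(t/2,t]$. It then combines $c\le 2/\diam$ with the growth bound $\mu_N(B(x,t))\lesssim\min\{t,t^2/\varepsilon\}$ of Lemma~\ref{lem:area-growth}. Each of the $O(N+1)$ shells with $t\in[\varepsilon,4]$ contributes $O(1)$, and the shells below $\varepsilon$ sum to $O(1)$. Integrating this uniform bound on the curvature potential gives the lemma.

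\textbf{Your route.} You partition triples by the longest common prefix of their depth-$N$ addresses. The exact identity $(F_v)_\sharp\mu_{N-k}=L^{k}\mu_N|_{D_v}$ and curvature scaling then make every generation contribute exactly a root-level energy $\Phi(N-k)$. This energy is at most $4g^{-2}$, because two points in distinct first-level disks force $\diam\ge g$. The triples inside a single depth-$N$ disk contribute exactly $c^2(\lambda)$, where $\lambda$ is normalized area measure on $D$.

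\textbf{Comparison.} Your argument needs neither Proposition~\ref{prop:regularity} nor Lemma~\ref{lem:area-growth}. It gives the exact identity $c^2(\mu_N)=\sum_{m=1}^{N}\Phi(m)+c^2(\lambda)$, hence the explicit bound $4g^{-2}N+c^2(\lambda)$, and it makes the ``one unit per generation'' mechanism transparent. However, it depends on exact self-similarity and on the critical relation between the $L$ equal masses and the contraction $L^{-1}$. The paper's argument uses only growth bounds, so it applies to any measure satisfying them. It also gives the stronger conclusion that the curvature potential is $O(N+1)$ at every point of $E_N$.

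\textbf{Minor points to tidy.}
\begin{itemize}
\item The prefix sum should run over $0\le k\le N-1$; the case $k=N$ is your diagonal term.
\item Finiteness of $c^2(\lambda)$ deserves its one-line proof: the same dyadic-shell argument with $\lambda(B(x,t))\le t^2$, which is exactly the paper's small-scale estimate.
\item In your last paragraph, the law of sines reads $c=2\sin\angle z/|x-y|$, with $z$ the vertex opposite the side $xy$. Hence $c\le 2/s$ for every side $s$, and so $c\le 2/\diam$.
\end{itemize}
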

\begin{proof}
For every triple of distinct points, $c(x,y,z)\le2/\diam\{x,y,z\}$. Fix $x\in E_N$ and decompose pairs $(y,z)$ according to the dyadic size of $\max\{|x-y|,|x-z|\}$. The contribution of the pairs with this maximum in $(t/2,t]$ is at most
\[
 Ct^{-2}\mu_N(B(x,t))^2.
\]
For $t\in[\varepsilon,4]$, Lemma~\ref{lem:area-growth} bounds this by a constant. There are $O(1+\log\varepsilon^{-1})=O(N+1)$ such dyadic scales. For $t<\varepsilon$, the quadratic bound gives $Ct^2/\varepsilon^2$, whose dyadic sum is bounded. Thus the curvature potential is at most $C(N+1)$ at every $x\in E_N$. Integrating against the probability measure $\mu_N$ proves \eqref{eq:natural-curvature-upper}.
\end{proof}

Choose $c_0>0$, depending only on the constants in Lemmas~\ref{lem:area-growth} and~\ref{lem:natural-curvature-upper}, sufficiently small that
\[
 \nu_N:=c_0(N+1)^{-1/2}\mu_N
\]
satisfies both conditions in \eqref{eq:admissible-capacity-measures}. Explicitly, its linear growth constant is at most $Cc_0$, and
\[
 c^2(\nu_N)=c_0^3(N+1)^{-3/2}c^2(\mu_N)
 \le Cc_0^2\nu_N(E_N).
\]
Theorem~\ref{thm:Tolsa} therefore gives
\begin{equation}\label{eq:capacity-lower}
 \gamma(E_N)\gtrsim(N+1)^{-1/2}.
\end{equation}

\subsection{The upper capacity bound}
The upper bound follows from a curvature estimate for arbitrary admissible measures. The geometric input is the following fixed-depth configuration.

\begin{lemma}\label{lem:robust-triple}
If $E$ is not contained in a line, then there is a fixed depth $b\ge1$ and three distinct words $v_1,v_2,v_3\in\cA^b$ such that, for every choice of points $x_j\in D_{v_j}$ with $j\in\{1,2,3\}$,
\begin{equation}\label{eq:robust-triple}
 c(x_1,x_2,x_3)\ge\kappa>0.
\end{equation}
The constants $b$ and $\kappa$ depend only on the system.
\end{lemma}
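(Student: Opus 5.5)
Since $E$ is not contained in a line, we pick three points $y_1,y_2,y_3\in E$ that are not collinear, so $c(y_1,y_2,y_3)=:2\kappa_0>0$. The plan is to shrink each point to a small cylinder disk around it and use continuity of Menger curvature away from degenerate configurations. The three points are pairwise distinct, being noncollinear. Set $d:=\min_{i\ne j}|y_i-y_j|>0$.

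On the open set of triples of pairwise distinct points, the Menger curvature is the continuous function
\[
 c(x,y,z)=\frac{4\,\mathrm{Area}(x,y,z)}{|x-y||y-z||z-x|},
\]
with the signed area replaced by its absolute value. Hence there is $\rho\in(0,d/4)$ such that $|x_j-y_j|\le\rho$ for $j=1,2,3$ implies $c(x_1,x_2,x_3)\ge\kappa_0$. To get an explicit $\rho$, note that moving each point by at most $\rho$ changes twice the triangle area by $O(\rho)$, with constant depending on $\diam E\le2$. The same perturbation changes each side length by at most $2\rho$. So a choice $\rho\approx\kappa_0 d^3$ works, and $\rho$ depends only on the fixed configuration, hence only on the system.

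Next choose $b\ge1$ with $2L^{-b}\le\rho$. Let $v_j\in\cA^b$ be the length-$b$ prefix of an address of $y_j$, so that $y_j\in F_{v_j}(E)\subset D_{v_j}$. Since $\diam D_{v_j}=2L^{-b}\le\rho$, every $x_j\in D_{v_j}$ satisfies $|x_j-y_j|\le\rho$. The previous paragraph then gives \eqref{eq:robust-triple} with $\kappa:=\kappa_0$. The words are distinct: the disks $D_{v_j}$ have diameter at most $\rho<d/4$ while the centres $y_j$ are $d$-separated, so the disks are pairwise disjoint. Both $b$ and $\kappa$ depend only on the chosen points, which are fixed once the system is fixed.

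The only mildly delicate step is the uniform continuity estimate near a nondegenerate triple, which is elementary. No step is a genuine obstacle. If one prefers a canonical choice, one can take the $y_j$ to be fixed points of three first-level maps, or of compositions of them, as long as they are noncollinear; such a triple exists because the fixed points of all words are dense in $E$ and $E$ is not contained in a line.
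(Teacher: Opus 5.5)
Your proposal is correct and follows essentially the same route as the paper. Both fix a noncollinear triple in $E$, use continuity of Menger curvature at that triple to get neighbourhoods on which $c\ge\kappa$, and take $b$ large enough that the depth-$b$ disks containing the three points lie in those pairwise disjoint neighbourhoods, which also forces the three words to be distinct; your explicit choice of $\rho$ and the remark about fixed points are optional refinements the paper does not need.
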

\begin{proof}
Choose noncollinear points $z_1,z_2,z_3\in E$. By continuity, there are pairwise disjoint neighbourhoods $U_j$ of $z_j$ and $\kappa>0$ such that $c(x_1,x_2,x_3)\ge\kappa$ on $U_1\times U_2\times U_3$. For sufficiently large $b$, the depth-$b$ disk containing each $z_j$ has diameter $2L^{-b}$ and is contained in $U_j$. These three disks have distinct addresses and satisfy \eqref{eq:robust-triple}.
\end{proof}

Consider the grouped system with alphabet $\cA^b$, branching number $J:=L^b$, and contraction factor $J^{-1}$. Its depth-$m$ iterate is $E_{bm}$. For $N\ge b$, monotonicity gives
\[
 \gamma(E_N)\le\gamma(E_{b\lfloor N/b\rfloor}).
\]
It therefore suffices to prove $\gamma(E_{bm})\lesssim m^{-1/2}$ for every integer $m\ge1$. The finitely many depths $N<b$ are bounded by $\gamma(D)$.

Identify the grouped alphabet $\cA^b$ with $\{1,\ldots,J\}$, retaining $v_1,v_2,v_3$ for its three distinguished symbols. For a word $\omega$ in this alphabet, $D_\omega$ denotes the disk obtained by concatenating its original length-$b$ words. Fix an integer $m\ge1$ and a measure $\nu$ supported on $E_{bm}$ satisfying \eqref{eq:admissible-capacity-measures}. Set $M:=\nu(E_{bm})$ and
\[
 m_\omega:=\nu(D_\omega),\qquad \ell_\omega:=J^{-|\omega|}.
\]
The disks of each fixed depth are disjoint, and $\nu$ is supported on the terminal iterate. Thus
\begin{equation}\label{eq:cylinder-mass-additivity}
 m_\omega=\sum_{a=1}^Jm_{\omega a},\qquad
 \sum_{|\omega|=k}m_\omega=M.
\end{equation}
The first identity holds for $|\omega|<m$, and the second for $0\le k\le m$. Applying linear growth to concentric balls of radius decreasing to $\ell_\omega$ gives $m_\omega\le\ell_\omega$.

Define the quadratic mass potentials
\begin{equation}\label{eq:mass-potentials}
 P_k:=\sum_{|\omega|=k}\frac{m_\omega^2}{\ell_\omega},\qquad \forall k\in\{0,\ldots,m\}.
\end{equation}
Then $P_k\le M$. Moreover,
\begin{align}
 P_{k+1}-P_k&=\sum_{|\omega|=k}\Delta_\omega,\label{eq:potential-increment}\\
 \Delta_\omega&:=\frac{J\sum_a m_{\omega a}^2-m_\omega^2}{\ell_\omega}
 =\frac J{\ell_\omega}\sum_{a=1}^J\left(m_{\omega a}-\frac{m_\omega}J\right)^2\ge0.
 \label{eq:variance-identity}
\end{align}
The identity follows from \eqref{eq:cylinder-mass-additivity}.

A node $\omega$ with $|\omega|<m$ is \emph{balanced} if $m_{\omega v_j}\ge m_\omega/(2J)$ for each $j\in\{1,2,3\}$, and \emph{unbalanced} otherwise. At an unbalanced node, at least one child satisfies $|m_{\omega a}-m_\omega/J|\ge m_\omega/(2J)$. Thus \eqref{eq:variance-identity} implies
\[
 \Delta_\omega\ge\frac{m_\omega^2}{4J\ell_\omega}.
\]
Since $m_\omega/\ell_\omega\le1$, this gives
\begin{equation}\label{eq:unbalanced-charge}
 \frac{m_\omega^3}{\ell_\omega^2}\le4J\Delta_\omega
 \qquad\text{for every unbalanced node }\omega.
\end{equation}

Menger curvature obeys the scaling rule
\[
 c(F_\omega(x),F_\omega(y),F_\omega(z))
 =\ell_\omega^{-1}c(x,y,z),
\]
since the similarity $F_\omega$ multiplies circumradii by $\ell_\omega$. For a balanced node $\omega$, this identity and \eqref{eq:robust-triple} give $c(x,y,z)\ge\kappa/\ell_\omega$ on $D_{\omega v_1}\times D_{\omega v_2}\times D_{\omega v_3}$. Consequently,
\begin{equation}\label{eq:balanced-charge}
 \iiint_{D_{\omega v_1}\times D_{\omega v_2}\times D_{\omega v_3}}
 c(x,y,z)^2\,d\nu(x)d\nu(y)d\nu(z)
 \ge\frac{\kappa^2}{8J^3}\frac{m_\omega^3}{\ell_\omega^2}.
\end{equation}
These integration domains are pairwise disjoint as $\omega$ varies over all words of length less than $m$. Incomparable words have disjoint parent disks. If one word is a proper prefix of another, the latter domain places all three points inside a single child of the former, whereas the former domain requires three distinct children. Their integrals therefore sum to at most $c^2(\nu)$.

Summing \eqref{eq:balanced-charge} over balanced nodes and \eqref{eq:unbalanced-charge} over unbalanced nodes yields
\begin{equation}\label{eq:total-cubic-mass}
 \sum_{k=0}^{m-1}\sum_{|\omega|=k}\frac{m_\omega^3}{\ell_\omega^2}
 \le\frac{8J^3}{\kappa^2}c^2(\nu)
      +4J(P_m-P_0)
 \lesssim_{J,\kappa}c^2(\nu)+M.
\end{equation}
At each depth $k$, H\"older's inequality and \eqref{eq:cylinder-mass-additivity} give
\[
 \sum_{|\omega|=k}\frac{m_\omega^3}{\ell_\omega^2}
 =J^{2k}\sum_{|\omega|=k}m_\omega^3\ge M^3.
\]
There are exactly $J^k$ words at that depth, including those of zero mass, and $\sum_{|\omega|=k}\ell_\omega=1$. Since admissibility gives $c^2(\nu)\le M$ and $P_m-P_0\le M$, \eqref{eq:total-cubic-mass} implies
\[
 mM^3\le\left(\frac{8J^3}{\kappa^2}+4J\right)M.
\]
For $M>0$, division by $M$ gives $M\lesssim_{J,\kappa}m^{-1/2}$; the same conclusion holds for $M=0$. Taking the supremum over admissible measures in Theorem~\ref{thm:Tolsa} yields $\gamma(E_{bm})\lesssim m^{-1/2}$. The preceding reduction gives the upper bound in \eqref{eq:general-capacity} for every $N\ge0$.

Together with \eqref{eq:capacity-lower}, this proves \eqref{eq:general-capacity}. For the neighbourhood estimate, Lemma~\ref{lem:neighbourhood-comparison} and monotonicity give
\[
 \gamma(E_{k_+(\delta)})
 \le\gamma(\Nbd{\delta}{E})
 \le\gamma(E_{k_-(\delta)}).
\]
Both indices equal $\log_L(\delta^{-1})+O_{\cF}(1)$, so \eqref{eq:general-capacity} gives \eqref{eq:neighbourhood-capacity}. This completes the proof of Theorem~\ref{thm:general-capacity}.

\begin{remark}[Dependence on noncollinearity]
The upper bound requires one fixed noncollinear triple of cylinders. Its constants may depend on the depth and curvature of that triple; no lower bound on the proportion of noncollinear children is required.
\end{remark}

\section{Proofs of the main results and endpoint restrictions}\label{sec:conclusions}
We complete the proofs of the main results and discuss the endpoint restrictions.

\subsection{Noncollinearity and pure unrectifiability of the attractor}\label{subsec:geometric-consequences}
We begin by showing that every system satisfying the hypotheses of Theorem~\ref{thm:structural-projection} has a noncollinear attractor.

\begin{lemma}\label{lem:noncollinearity}
Every system satisfying the hypotheses of Theorem~\ref{thm:structural-projection} has a noncollinear attractor.
\end{lemma}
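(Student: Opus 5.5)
The plan is to argue by contradiction, using only the rotating symbols and the badly approximable condition. Suppose the attractor $E$ lies in an affine line $\Lambda=p+e^{i\phi}\R$. Since $E=\bigcup_a F_a(E)$, every first-level image $F_a(E)$ is a subset of $E\subset\Lambda$. Each $F_a(E)$ is also contained in the line $F_a(\Lambda)=F_a(p)+e^{i(\phi+2\pi\tau_a)}\R$.

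The first key step is to show that $E$ contains at least two distinct points. This is immediate from $0<\cH^1(E)$ (Proposition~\ref{prop:regularity}), or from the fact that there are $L\ge3$ disjoint first-level disks, each meeting $E$. Consequently each $F_a(E)$, being a similar copy of $E$, also contains two distinct points. These two points then determine a unique line, so $F_a(\Lambda)=\Lambda$ as lines.

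Comparing directions modulo $\pi$ gives $\phi+2\pi\tau_a\equiv\phi \pmod{\pi}$, that is, $2\tau_a\in\Z$ for every $a\in\cA$.

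The second step applies this to a rotating symbol $a_1\in\cB$, which exists since $q\ge1$. We get $\|2\tau_1\|_\T=0$, and hence $\|2\tau\|_{\T^q,\infty}\ge\|2\tau_1\|_\T=0$ fails the lower bound. Indeed, \eqref{eq:BA} with $k=2$ requires $\|2\tau\|_{\T^q,\infty}\ge b\,2^{-1/q}>0$, while $2\tau_1\in\Z$ makes the first coordinate vanish. To be careful, \eqref{eq:BA} uses the maximum over coordinates. So I would apply the argument to all of $\cB$ at once: every $\tau_j$ satisfies $2\tau_j\in\Z$, so $\|2\tau\|_{\T^q,\infty}=0$, which contradicts \eqref{eq:BA}.

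There is a useful cross-check. The nonrotating maps have $\tau_h=0$, so they are consistent with $\Lambda=\R$. It is exactly the rotating maps that force the contradiction.

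The only delicate point is the line-equality step. I expect no real obstacle there, only the need to note that a similarity maps lines to lines and that a set with two distinct points lies in at most one line.

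Optionally, I would add the concrete check for the explicit systems. There $E$ contains the fixed point of $H_0$ on the real axis and points in $T_\ell(D)$, which lie strictly above the real axis since $|\operatorname{Im}|\ge 1/2-1/L>0$. This is not needed for the general lemma.
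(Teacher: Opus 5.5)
Your proof is correct and is essentially the paper's argument: two distinct points in each $F_a(E)\subset E\subset\Lambda$ force $F_a(\Lambda)=\Lambda$, hence $2\tau_a\in\Z$ for every rotating symbol, so $\|2\tau\|_{\T^q,\infty}=0$ contradicts \eqref{eq:BA} at $k=2$. Drop the intermediate sentence using $\|2\tau\|_{\T^q,\infty}\ge\|2\tau_1\|_\T$, which points the wrong way for the conclusion; your corrected version, applied to all of $\cB$ at once, is the right one.
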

\begin{proof}
Strong separation implies that $E$ contains at least two distinct points. Suppose that $E$ is contained in an affine line $\ell$. For every symbol $a$, the set $F_a(E)\subset E$ contains two distinct points, so its affine hull is both $F_a(\ell)$ and $\ell$. Thus $F_a(\ell)=\ell$, which implies $\tau_a\in\{0,1/2\}$ modulo one. In particular, $\|2\tau\|_{\T^q,\infty}=0$, contradicting \eqref{eq:BA} with $k=2$.
\end{proof}

Let $E$ be the attractor of such a system. Proposition~\ref{prop:regularity} gives $0<\cH^1(E)<\infty$. Since $E\subset E_N$ for every $N$, Theorem~\ref{thm:structural-projection} implies
\[
 \cH^1(\pi_\theta(E))
 \le\inf_{N\ge0}\cH^1(\pi_\theta(E_N))=0
 \qquad\text{for every }\theta\in[0,\pi).
\]
Hence $\Fav(E)=0$, and the Besicovitch--Federer theorem \cite[Theorem~18.1]{Mattila1995} implies that $E$ is purely $1$-unrectifiable. Lemma~\ref{lem:noncollinearity} permits application of Theorem~\ref{thm:general-capacity}; monotonicity then gives $\gamma(E)\le\inf_N\gamma(E_N)=0$.

\subsection{Proofs of the main theorems}
For either system below, put $t_\delta:=\log\delta^{-1}$ and $N:=\lfloor\log_L\delta^{-1}\rfloor$. Then $\delta\le L^{-N}$ and $N+1\asymp t_\delta$ as $\delta\rightarrow0$. Monotonicity transfers the neighbourhood bounds at scale $L^{-N}$ to scale $\delta$.

\begin{proof}[Proof of Theorem~\ref{thm:explicit-main}]
Section~\ref{sec:explicit} verifies the hypotheses of Corollary~\ref{cor:sixteen-endpoint}. Since $\beta_*>53/100$, that corollary and the preceding scale conversion give \eqref{eq:explicit-main-projection}. Lemma~\ref{lem:noncollinearity} and Theorem~\ref{thm:general-capacity} give \eqref{eq:explicit-main-capacity}. Its lower bound yields
\[
 \Fav(\Nbd{\delta}{E})
 \lesssim t_\delta^{-53/100}
 =\bigl(t_\delta^{-1/2}\bigr)^{53/50}
 \lesssim\gamma(\Nbd{\delta}{E})^{53/50},
\]
which proves \eqref{eq:explicit-main-comparison}. The geometric assertions follow from Subsection~\ref{subsec:geometric-consequences}.
\end{proof}

For this attractor, write $g_\delta:=\gamma(\Nbd{\delta}{E})$. Since $t_\delta\asymp g_\delta^{-2}$, the endpoint estimate also gives
\[
 \Fav(\Nbd{\delta}{E})
 \lesssim g_\delta^{2\beta_*}
       [\log(2+g_\delta^{-1})]^{\beta_*}.
\]
Thus $\Fav(\Nbd{\delta}{E})\lesssim_p g_\delta^p$ for every $p\in(0,2\beta_*)$, where $2\beta_*=1.0623942474\ldots$.

\begin{proof}[Proof of Theorem~\ref{thm:arbitrary-exponent}]
Fix $\alpha\in(0,1)$, choose $q,L$ as in \eqref{eq:q-choice}--\eqref{eq:L-choice}, and take the attractor $E_\alpha$ of \eqref{eq:general-explicit-maps}. Section~\ref{sec:explicit} verifies the hypotheses of Theorem~\ref{thm:structural-projection} with $\beta(q,L)>\alpha$. That theorem and the common scale conversion give \eqref{eq:arbitrary-projection}. The geometric assertions follow from Subsection~\ref{subsec:geometric-consequences}, and Theorem~\ref{thm:general-capacity} gives \eqref{eq:arbitrary-capacity}. The system depends only on the fixed choices $\alpha,q,L$.
\end{proof}

\subsection{Endpoint restrictions}
The Fourier estimate used in the projection argument is \eqref{eq:discrepancy-rate} for the orientation measure \eqref{eq:orientation-measure}. The hypotheses of Theorem~\ref{thm:structural-projection} require rotating maps satisfying \eqref{eq:BA} and therefore exclude the classical four-corner system.

For each fixed $q$, writing $x:=\log_Lr\in(0,1)$ in \eqref{eq:structural-exponent} gives
\[
 \beta=\frac{x}{2+2/q-x}<\frac{q}{q+2}.
\]
Thus exponents approaching one in \eqref{eq:family-exponent} require varying the defining system. This restriction concerns the exponent supplied by the present argument.

For every system in Theorem~\ref{thm:general-capacity}, Proposition~\ref{prop:favard-lower}, Lemma~\ref{lem:neighbourhood-comparison}, and \eqref{eq:neighbourhood-capacity} imply
\[
 \Fav(\Nbd{\delta}{E})\gtrsim(\log\delta^{-1})^{-1}
 \gtrsim\gamma(\Nbd{\delta}{E})^2.
\]
Since $\gamma(\Nbd{\delta}{E})\to0$, an upper bound $\Fav(\Nbd{\delta}{E})\lesssim\gamma(\Nbd{\delta}{E})^p$ with $p>2$ is impossible for these systems. In this class, the endpoint $p=2$ is equivalent to an upper bound $\Fav(\Nbd{\delta}{E})\lesssim(\log\delta^{-1})^{-1}$.

\begin{question}
Does there exist an explicitly specified finite system satisfying Definition~\ref{def:disk-separation}, with a noncollinear attractor $E$, such that
\[
 \Fav(\Nbd{\delta}{E})\lesssim(\log\delta^{-1})^{-1}, \qquad \forall \delta \in (0,1)?
\]
Can $E$ be chosen to satisfy the stronger estimate
\[
 \sup_{0\le\theta<\pi}\cH^1\bigl(\pi_\theta(\Nbd{\delta}{E})\bigr)
 \lesssim(\log\delta^{-1})^{-1}\qquad\forall\delta\in(0,1)?
\]
\end{question}

The first question is an existence version of \cite[Question~7.3]{PS2002}. Such a bound cannot hold for every strongly separated self-similar $1$-set: Bateman--Volberg \cite{BV2010} prove the lower bound $(\log N)/N$ for the classical four-corner iterates. Theorem~\ref{thm:arbitrary-exponent} establishes every exponent strictly below one, with a fixed finite system for each exponent.

\section*{Acknowledgements}
The author is supported by an \emph{Arts and Science Postdoctoral Fellowship} and an \emph{NSERC Canadian Postdoctoral Research Award} at the University of Toronto. The author also acknowledges the support of the SQuaREs program at the \emph{American Institute of Mathematics}, through the SQuaRE ``Covering Fractals by Curves''. The author thanks Ignacio Uriarte-Tuero for helpful discussions and mentorship.

\end{document}